\newtheorem{theo}{Theorem}
\newtheorem{lem}[theo]{Lemma}
\newtheorem{coro}[theo]{Corollary}
\newtheorem{prop}[theo]{Proposition}
\newtheorem{hypo}{Assumption}
\theoremstyle{remark}
\newtheorem{rem}{Remark}
\begin{document}

\title{Estimation of conditional laws given an extreme component}

\author{Anne-Laure Foug\`eres\thanks{Universit\'e Claude Bernard - Lyon 1} \and
  Philippe Soulier\thanks{Universit\'e Paris Ouest-Nanterre}}

\maketitle

 \begin{abstract}
   Let $(X,Y)$ be a bivariate random vector. The estimation of a
   probability of the form $P(Y\leq y \mid X >t) $ is challenging when
   $t$ is large, and a fruitful approach consists in studying, if it
   exists, the limiting conditional distribution of the random vector
   $(X,Y)$, suitably normalized, given that $X$ is large.  There
   already exists a wide literature on bivariate models for which this
   limiting distribution exists. In this paper, a statistical analysis
   of this problem is done.  Estimators of the limiting distribution
   (which is assumed to exist) and the normalizing functions are
   provided, as well as an estimator of the conditional quantile
   function when the conditioning event is extreme. Consistency of the
   estimators is proved and a functional central limit theorem for the
   estimator of the limiting distribution is obtained. The small
   sample behavior of the estimator of the conditional quantile
   function is illustrated through simulations. Some real data are analysed.
 \end{abstract}

\section{Introduction}
\label{sec:introduction}

Let $(X,Y)$ be a bivariate random vector for which the conditional distribution
of $Y$ given that $X>t$ is of interest, for values of $t$ such that the
conditioning event is a rare event. This happens for example when the possible
contagion between two dependent market returns $X$ and $Y$ is investigated, see
e.g.  \cite{bradley:taqqu:2004} or \cite{abdous:fougeres:ghoudi:soulier:2008}.
The estimation of a probability of the form $P(Y\leq y \mid X >t) $ starts to
be challenging as soon as $t$ is large, since the conditional empirical
distribution becomes useless when no observations are available. A fruitful
alternative approach consists in studying, if it exists, the limiting
distribution of the random vector $(X,Y)$ conditionally on $X$ being large.
This corresponds to assuming that there exist functions $m$, $a$ and $\psi$,
and a bivariate distribution function (cdf) $F$ on
$[0,\infty)\times(-\infty,\infty)$ with non degenerate marginal distributions,
such that
\begin{gather}
  \lim_{t\to\infty} \mathbb{P}(X \leq t + \psi(t) x \; ; \; Y \leq m(t) + a(t) y \mid X >
  t) = F(x,y)  \label{eq:loi-limite} 
\end{gather}
at all points of continuity of $F$. This assumption has been called a
``conditional extreme value'' (CEV) model by \cite{das:resnick:2009}.  Some of
its main consequences were thoroughly investigated by
\cite{heffernan:resnick:2007} and its relationship to standard bivariate
extreme value theory has been investigated in \cite{das:resnick:2008}.  In
particular, Condition~(\ref{eq:loi-limite}) does not imply that the
distribution of $(X,Y)$ belongs to the domain of attraction of a bivariate
extreme value distribution. Examples of random vectors which are not in the
domain of attraction of a bivariate extreme value distribution and still
satisfy Condition~(\ref{eq:loi-limite}) are given in \cite{das:resnick:2008}
and \cite{fougeres:soulier:2010} (see Section 3.1 therein for a related
discussion).

The classical bivariate extreme value condition means that there exist
normalizing functions $c_1,c_2$, $d_1,d_2$ and a bivariate extreme value
distribution $H$ such that 
\begin{align} \label{eq:EV} \lim_{n\to\infty}\mathbb{P}^n(X \leq c_1(n)+d_1(n)x \; , Y
  \leq c_2(n)+d_2(n)y) =  H(x,y) \; .
\end{align}
The function $c_1$ can always be chosen such that $\lim_{s\to\infty}
s\mathbb{P}(X>c_1(s)) = 1$, and then it is easily seen that 
\begin{align*}
  \lim_{s\to\infty} \mathbb{P}(X \leq c_1(s)+d_1(s)x \, , \, Y \leq  c_2(s)+d_2(s)y \mid
  X>c_1(s)) = - \log H(0,y) + \log H(x,y) \; .
\end{align*}
Two cases are possible: either the limiting distribution $H$ is in product form
or it is not in product form. In extreme value theory, the former case is
referred to as asymptotic independence, and the latter is referred to as
asymptotic dependence.  In the case of asymptotic dependence, then $F(x,y) = -
\log H(0,y) + \log H(x,y)$ is a non degenerate distribution function, and the
CEV condition~(\ref{eq:loi-limite}) holds. Thus the problem is entirely solved
by the standard extreme value theory in the case of asymptotic dependence. In
the case of asymptotic independence, i.e. $H(x,y) = H_1(x)H_2(y)$, then $- \log
H(0,y) + \log H(x,y) = -\log H_1(x)$, so that the limit is degenerate with
respect to $y$. In the case of asymptotic independence, standard bivariate
extreme value theory is useless to check the CEV
condition~(\ref{eq:loi-limite}).  To summarize this discussion, it appears that
the CEV model~(\ref{eq:loi-limite}) is potentially useful when the standard
bivariate extreme value condition~(\ref{eq:EV}) does not hold or when
condition~(\ref{eq:EV}) holds with asymptotic independence.

It must be noted that the CEV condition~(\ref{eq:loi-limite}) always holds when
$X,Y$ are independent, $Y$ is non-degenerate and $X$ is in the domain of
attraction of a univariate extreme value distribution. This implies that all
bivariate distributions $F$ that can be expressed as $F(x,y)=F_1(x) F_2(y)$,
where $F_1$ is a univariate extreme value distribution and $F_2$ is \emph{any}
non-degenerate distribution function can appear as a limiting distribution
in~(\ref{eq:loi-limite}). This trivial consideration has the consequence that
it is very difficult to have a general theory for the conditional extreme value
model and to define statistical procedures valid in all cases. Rather, it is
necessary to study classes of bivariate distributions that satisfy the CEV
condition~(\ref{eq:loi-limite}) and to establish statistical procedures suited
to these classes.

Models for which condition~(\ref{eq:loi-limite}) holds have already been
investigated in many references.  \cite{eddy:gale:1981} and
\cite{berman:1982,berman:1992} proved that~(\ref{eq:loi-limite}) holds for
spherical distributions; bivariate elliptical distributions were investigated
by \cite{berman:1982}, multivariate elliptical distributions and related
distributions by \cite{hashorva:2006,hashorva:2007asymp}
\cite{hashorva:kotz:kume:2007}. The analysis of the underlying geometric
structure (ellipticity of the level sets of the densities) has lead to various
generalizations by \cite{barbe:2003} and \cite{balkema:embrechts:2007}.  See
also \cite{fougeres:soulier:2010} for a recent review on the subject. An
important finding of these works is that when the CEV condition is a
consequence of such geometric properties, the domain of attraction of the
conditioning variable determines the nature of the limiting distribution $F$
in~(\ref{eq:loi-limite}). For instance, for the usual bivariate elliptical
distributions, if the conditioning variable has a regularly varying right tail,
then the limiting distribution $F$ is not in product form, whereas it is in
product form if the conditioning variable is in the domain of attraction of the
Gumbel law.  Other types of models that satisfy
Assumption~(\ref{eq:loi-limite}) have been studied in
\cite{alink:lowe:wuthrich:2004},
\cite{hashorva:2008beta,hashorva:2009polar,hashorva:2009wp}.

The aim of this paper is the statistical estimation of the functions $a$ and
$m$ that appear in (\ref{eq:loi-limite}), as well as the limiting distribution
function $F$.  Two problems are considered. The first one is the nonparametric
estimation of the limiting distribution and of the normalizing functions.  This
is done in Section~\ref{sec:NPestim} in full generality. We only assume that
the CEV condition~(\ref{eq:loi-limite}) holds and some additional moment
conditions which are necessary to obtain consistency of the estimators. In
order to obtain central limit theorems, we also need to assume some kind of
second order conditions.

As discussed above, it seems impossible to go beyond these results without
making some restrictive assumptions.  From Section~\ref{sec:product} onwards,
we assume that the conditioning variable is in the maximum domain of attraction
of the Gumbel law, and that the limiting joint distribution $F$ has product
form. This choice leaves out many interesting cases, but is motivated by
previous works and has not been considered yet in a statistical study.  Under
these assumptions, we validate a Kolmogorov-Smirnov type test for the limiting
distribution of the $Y$ variable, e.g. the standard Gaussian distribution which
appears in many examples.  Since we are also interested in the case where the
conditioning event is beyond the range of observations, a semiparametric
procedure is defined to allow this extrapolation. This again necessitates
restrictive assumptions. Those we make are satisfied by several models already
investigated (cf. \cite{fougeres:soulier:2010}).  Let us finally note that to
the best of our knowledge, the estimators of the quantities related to the
conditional laws presented in this paper have not been considered before.

The paper is organized as follows.  In Section~\ref{sec:assump-prelim}, we
rephrase (\ref{eq:loi-limite}) in terms of vague convergence of measures in
order to use the point process techniques and the results of
\cite{heffernan:resnick:2007}.  We also introduce moment assumptions which are
needed to prove the consistency of the nonparametric estimators introduced in
Section~\ref{sec:NPestim}.  A functional central limit theorem for the
estimator of the limiting distribution is obtained under a second order
condition.  In Section~\ref{sec:product}, a specific analysis of the case of a
limiting distribution with product form and $X$ in the domain of attraction of
the Gumbel law is done.  The functional central limit theorem is used to derive
a goodness of fit test for the second marginal of the limiting distribution
$F$.  In Section~\ref{subsec:SP-estim}, semiparametric estimators that allow
extrapolations beyond the range of the observations are studied and applied to
the estimation of conditional quantiles when the conditioning event is extreme.
A simulation study is given in Section~\ref{sec:simulations} to illustrate the
small sample behavior of our estimators, of the goodness of fit test proposed
in Section~\ref{subsec:nonparam-estim} and of the estimator of the conditional
quantile proposed in Section~\ref{subsec:SP-estim}. These results are applied
in Section~\ref{sec:data} to some financial data.  Section~\ref{sec:preuves}
collects the proofs.

\section{Assumptions and preliminary results}
\label{sec:assump-prelim}
We first rephrase the convergence~(\ref{eq:loi-limite}) in terms of vague
convergence of measures, in order to use point process techniques and the
results of \cite{heffernan:resnick:2007}. See also
\cite{das:resnick:2009,das:resnick:2008}.  Condition~(\ref{eq:loi-limite})
implies that the marginal distribution of $X$ belongs to the domain of
attraction of an extreme value distribution with index $\gamma\in\mathbb R$,
i.e.  there exist normalizing sequences $\{b_n\}$ and $\{c_n\}$ with $c_n>0$
such that $\mathbb{P}(\max_{1 \leq i \leq n} (X_i-b_n)/c_n \leq x)$ converges
to $\exp\{-\bar P_\gamma(x)\}$ for each $x$ such that $1 + \gamma x > 0$, where
$\bar P_\gamma(x) = (1+\gamma x)^{-1/\gamma}$ if $\gamma\ne0$ and $\bar P_0(x)
= \mathrm e^{-x}$, and the random variables $X_i$ are independent copies of
$X$.
For simplicity, we assume that $\gamma\geq0$, and in the case
$\gamma=0$ we assume that the right endpoint of the marginal
distribution of $X$ is infinite.

Recall that a measure defined on the Borel sigma-field of a locally
compact separable space $E$ is called a Radon measure if it is finite
on compact sets.  A sequence of Radon measures $\sigma_n$ defined on
$E$ converges vaguely to a Radon measure $\sigma$ if $\int_E f(x)
\sigma_n(\mathrm d x)$ converges to $\int_E f(x) \sigma(\mathrm d x)$ for all
compactly supported function $f$. See \citet[Chapter~3]{resnick:1987}
or \citet[Appendix~A3]{heffernan:resnick:2007}. We will consider vague
convergence of Radon measures defined on the Borel sigma-fields of
$(-1/\gamma,\infty]$ or $(-1/\gamma,\infty]\times[-\infty,\infty]$.

\label{sec:hypo}
\begin{hypo}
  \label{hypo:exces}
  There exist $\gamma\geq0$, monotone functions $a$, $b$, $m$ and
  $\psi$ such that the marginal distribution of $X$ is in the domain
  of attraction of the extreme value distribution with extreme value
  index $\gamma$ and the sequence of measures $\nu_n$ defined by
  \begin{align*}
    \nu_n (\cdot) = n \mathbb{P} \left( \left\{\frac{X - b(n)}{\psi\circ
          b(n)}, \frac{Y - m \circ b(n)}{a\circ b(n)}\right\} \in
      \cdot \right)
  \end{align*}
  converges vaguely on $(-1/\gamma,\infty] \times
    [-\infty,\infty]$ to a Radon measure $\nu$ such that
  $\nu([0,\infty)\times(-\infty,\infty))=1$, the distribution function
  $y\mapsto\nu([0,\infty)\times(-\infty,y])$ is non degenerate and
    the application $(x,y)\mapsto \nu([x,\infty)\times(-\infty,y])$ is
    continuous on $(-1/\gamma,\infty] \times [-\infty,\infty]$.
\end{hypo}

Assumption~\ref{hypo:exces} is equivalent to assumptions 1.2 and 1.3 of
\cite{das:resnick:2009} and to Assumption (5) of \cite{heffernan:resnick:2007},
apart from the continuity assumption which is needed for statistical purposes
such as the Kolmogorov-Smirnov test proposed in
Section~\ref{subsec:nonparam-estim}.   The link between
Assumption~\ref{hypo:exces} and Equation~(\ref{eq:loi-limite}) is that the
limiting distribution $F$ is given, for all positive $x$ and real $y$, by
\begin{align*}
  F(x,y) = \nu([0,x] \times (-\infty,y]) \; .
\end{align*}
Assumption~\ref{hypo:exces} also implies that $F$ is continuous and
that the sequence of probability distribution functions $F_n$ defined,  for all positive $x$ and real $y$,
by
 \begin{align*}
   F_n(x,y) = \nu_n([0,x]\times(-\infty,y]) 
 \end{align*}
 converges to $F$ locally uniformly.  Assumption~\ref{hypo:exces} can
 also be interpreted as the weak convergence to $F$ of the vector
 $(X-b(n))/\psi\circ b(n),(Y-m\circ b(n))/a\circ b(n))$ conditionally
 on $X>b(n)$, i.e. for all bounded continuous function $h$ on
 $[0,\infty)\times (-\infty,\infty)$,
 \begin{gather}  \label{eq:weak-convergence}
   \lim_{n\to\infty} \mathbb{E} \left[ h\left(\frac{X-b(n)}{\psi\circ
         b(n)},\frac{Y-m\circ b(n)}{a\circ b(n)} \right) \mid X>b(n)
   \right] = \int_0^\infty \int_{-\infty}^\infty h(x,y) F(\mathrm d x,\mathrm d y)
   \; .
 \end{gather}
\begin{rem} \label{rem:unidim}
  All results concerning only the marginal distribution of $X$ are
  obtained by applying the usual extreme value theory. In particular,
  the functions $\psi$ and $b$ are determined by the marginal
  distribution of $X$ only. The function $b$ can and will be chosen as
  $b=(1/(1-F_X))^\leftarrow$ where $F_X$ is the distribution function
  of $X$.  The function $\psi$ satisfies
  \begin{gather}
    \lim_{x \to +\infty} \frac{\psi(x+\psi(x)u)}{\psi(x)} = 1 + \gamma u \;
    . \label{eq:propriete-psi}
  \end{gather}
  See \cite[Propositions 1.4 and 1.11]{resnick:1987}.  For any
  $x>-1/\gamma$, it holds that  
  \begin{gather*}
    \nu([x,\infty] \times [-\infty,\infty]) = (1+\gamma x)^{-1/\gamma}  \; , 
  \end{gather*}
  with the usual convention that this expression must be read as
  $\mathrm e^{-x}$ when $\gamma=0$.
  \end{rem}

  \begin{rem}
    Assumption~\ref{hypo:exces} has little implications on the
    functions $a$ and $m$ and on the distribution $\Psi$ defined by
  \begin{gather*}
    \Psi(z) = \int_0^\infty \int_{-\infty}^z \nu(\mathrm d x, \mathrm d y) \; .
  \end{gather*}
  If $Y$ is independent of $X$, then $\Psi$ is the distribution of $Y$,
  $a \equiv 1$ and $m\equiv0$.  Thus $\Psi$ can be {\em any} probability
  distribution and it is not necessarily an
  extreme value distribution.
\end{rem}

\begin{rem}
  If the pair $(X,Y)$ satisfies Assumption~\ref{hypo:exces}, then so does any
  affine coordinatewise transformation of $(X,Y)$.  For instance, if $X$ and
  $Y$ have finite mean and variance, then
  $((X-\mathbb{E}[X])/\mathrm{var}^{1/2}(X),(Y-\mathbb{E}[Y])/\mathrm{var}^{1/2}(Y))$
  also satisfies Assumption~\ref{hypo:exces}.  But non linear transformations
  of $(X,Y)$ do not necessarily satisfy the assumption. Even though the
  conditioning variable can be standardized, the simultaneous transformation of
  $X$ and $Y$ to random variables with prescribed marginal distributions is not
  always possible. This problem has been thoroughly investigated in
  \cite[Section~7]{heffernan:resnick:2007}. It is never possible in the cases
  where the joint limiting distribution is a product measure. Consequently, we
  do not make any specific assumption on the marginal distributions of $X$
  and~$Y$.

\end{rem}

  Obviously, the functions $a$ and $m$ are defined up to asymptotic
  equivalence, i.e. if $m'$ and $a'$ satisfy
  \begin{gather*}
    \lim_{x\to\infty} \frac{a'(x)}{a(x)} = 1 \; , \ \
    \lim_{x\to\infty} \frac{m(x) -m'(x)}{a(x)} = 0 \; ,
  \end{gather*}
  then the measure $\nu'_n$ defined as $\nu_n$ but with $a'$ and $m'$
  instead of $a$ and $m$ converges vaguely to the same limit measure
  $\nu$. Beyond this trivial remark, the following result summarizes
  \citet[Propositions~1 and~2]{heffernan:resnick:2007} and contains most
  of what can be infered from Assumption~\ref{hypo:exces}.  Recall
  that a function $f$ defined on a neighborhood of infinity is said to
  be regularly varying if there exists a constant $\alpha\in\mathbb R$
  such that
  \begin{align*}
    \lim_{x\to\infty} \frac{f(tx)}{f(x)} = t^\alpha 
  \end{align*}
  for all $t>0$. If $\alpha=0$, the function is called slowly varying.

\begin{lem}  \label{prop:h-resnick}
  Under Assumption~\ref{hypo:exces}, there exists
  $\zeta\in{\mathbb R}$ such that the function $a \circ b$ is regularly
  varying at infinity with index $\zeta$ and the function $m$
  satisfies
  \begin{gather*}
    \lim_{t \to \infty} \frac{m\circ b (tx) - m \circ b (t)}{a\circ
      b(t)} = J_\zeta(x) \; ,
  \end{gather*}
  with $J_\zeta(x) = (x^\zeta-1)/\zeta$ if $\zeta\ne0$ and $J_0(x) = c
  \log(x)$ for some $c\in{\mathbb R}$, and the convergence is locally
  uniform on $(0,\infty)$.
\end{lem}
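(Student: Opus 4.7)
The plan is a convergence-to-types argument exploiting the vague convergence of $\nu_n$ at the two scales $b(n)$ and $b(tn)$ for fixed $t>0$. From Remark~\ref{rem:unidim} and standard univariate extreme value theory, the function $\psi\circ b$ is regularly varying with index $\gamma$ and
\begin{gather*}
  \lim_{n\to\infty} \frac{b(tn)-b(n)}{\psi\circ b(n)} = J_\gamma(t),
\end{gather*}
so the behaviour of the $X$--coordinate under the two normalizations is completely understood. It remains to transfer this information to the $Y$--coordinate via the joint limit $\nu$.

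Fix $t>0$. Choose a continuity point $x>J_\gamma(t)$ of $z\mapsto\nu([z,\infty]\times[-\infty,\infty])$ and a bounded continuous $\phi$ on $\mathbb R$. Writing the weak convergence~(\ref{eq:weak-convergence}) at level $n$ for the event $\{X>b(n)+\psi\circ b(n)\,x\}$ one obtains convergence of the law of $(Y-m\circ b(n))/a\circ b(n)$ conditionally on $X>b(n)+\psi\circ b(n)\,x$ to a non‑degenerate law $\Psi_x$ depending on $x$. On the other hand, applying~(\ref{eq:weak-convergence}) at level $tn$, conditionally on the same event (which, since $(b(tn)-b(n))/\psi\circ b(n)\to J_\gamma(t)<x$, is equivalent up to a vanishing perturbation to conditioning at scale $tn$), the same random variable can be rewritten as
\begin{gather*}
  \frac{m\circ b(tn)-m\circ b(n)}{a\circ b(n)}+\frac{a\circ b(tn)}{a\circ b(n)}\cdot\frac{Y-m\circ b(tn)}{a\circ b(tn)},
\end{gather*}
whose conditional distribution converges to the image of some non‑degenerate law by an affine transformation depending on $t$. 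Since the limits coincide and are non‑degenerate, the convergence to types theorem forces the ratios
\begin{gather*}
  \alpha(t):=\lim_{n\to\infty}\frac{a\circ b(tn)}{a\circ b(n)},\qquad \beta(t):=\lim_{n\to\infty}\frac{m\circ b(tn)-m\circ b(n)}{a\circ b(n)}
\end{gather*}
to exist in $(0,\infty)$ and $\mathbb R$, respectively, for every $t>0$.

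Composing the convergences for $n\to sn\to tsn$ and using the continuity of the limits yields the Cauchy‑type functional equations $\alpha(ts)=\alpha(t)\alpha(s)$ and $\beta(ts)=\alpha(s)\beta(t)+\beta(s)$. Monotonicity of $a$ and $m$ (inherited from Assumption~\ref{hypo:exces}) makes $\alpha$ measurable and monotone, so $\alpha(t)=t^\zeta$ for some $\zeta\in\mathbb R$; this is the claimed regular variation of $a\circ b$. Plugging $\alpha(t)=t^\zeta$ into the equation for $\beta$ and solving under monotonicity gives $\beta(t)=c\,(t^\zeta-1)/\zeta$ if $\zeta\ne0$ and $\beta(t)=c\log t$ if $\zeta=0$, for some constant $c$. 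When $\zeta\neq0$ the factor $c$ may be absorbed into a redefinition of $a$ within its asymptotic equivalence class, producing the stated limit $J_\zeta(x)$.

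The main obstacle is to execute cleanly the convergence‑to‑types step: one must pass from the vague convergence on the non‑compact space $(-1/\gamma,\infty]\times[-\infty,\infty]$ (in which the total mass on the sets of interest is infinite) to the weak convergence of a conditional probability on $\mathbb R$, and then identify two different affine images of non‑degenerate limits. The continuity assumption on $\nu$ in Assumption~\ref{hypo:exces} is what makes the boundary of the conditioning event harmless; the local uniformity of the limit in the conclusion then follows, by a standard monotonicity argument, from its pointwise existence on $(0,\infty)$.
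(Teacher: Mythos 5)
The paper itself contains no proof of Lemma~\ref{prop:h-resnick}: it is presented as a summary of Propositions~1 and~2 of \citet{heffernan:resnick:2007}, and your argument is essentially a reconstruction of that convergence-to-types proof. The architecture is therefore the right one: compare the normalizations at scales $n$ and $tn$, extract the limits $\alpha(t)$ and $\beta(t)$ by Khinchin's theorem, derive the Hamel-type functional equations, and solve them using the monotonicity of $a$ and $m$; the local uniformity and the passage from integer $n$ to a continuous parameter are indeed routine.

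There is, however, one genuine gap at the central step. Khinchin's convergence-to-types theorem requires \emph{both} limit laws to be non-degenerate, and you assert without justification that the limits $\Psi_x$ are non-degenerate. Assumption~\ref{hypo:exces} only postulates non-degeneracy of $y\mapsto\nu([0,\infty)\times(-\infty,y])$, i.e.\ the case $x=0$, and your choice $x>J_\gamma(t)$ forces at least one of the two conditional limits to be a normalization of $\nu([u,\infty]\times\cdot)$ for some $u>0$ -- a strict subregion of $\{x\geq0\}$ -- whose non-degeneracy is precisely what is not assumed. Two repairs are available. First, invoke the continuity clause of Assumption~\ref{hypo:exces}: since $\nu([u,\infty]\times[-\infty,\infty])=(1+\gamma u)^{-1/\gamma}>0$, a degenerate section would make $y\mapsto\nu([u,\infty)\times(-\infty,y])$ jump, contradicting the assumed joint continuity. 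Alternatively, reverse your inequality: take $x=0$ and $t\geq1$, so that the two limiting regions $[0,\infty]$ and $[J_\gamma(1/t),\infty]\supset[0,\infty]$ both contain $\{x\geq0\}$ and both limit measures dominate the non-degenerate $\Psi$; the case $t<1$ then follows by inverting the ratios ($\alpha_n(t)=1/\alpha_{tn}(1/t)$, and similarly for $\beta$). A minor further point: the functional equation only yields $\beta(t)=c\,(t^\zeta-1)/\zeta$ for some constant $c$ when $\zeta\neq0$, so the stated limit $J_\zeta$ is obtained only modulo the rescaling of $a$ that you mention (and not at all if $c=0$); this imprecision is inherited from the statement of the lemma rather than introduced by your proof.
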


For a sequence $(X_i,Y_i)$, $1 \leq i \leq n$, let $X_{(n:i)}$ denote
the $i$-th order statistic and $Y_{[n:i]}$ denote its concomitant,
i.e.  $X_{(n:1)}$,\dots,$X_{(n:n)}$ is the ordering of $X_1,\dots,X_n$
in increasing order, and $Y_{[n:i]}$ is the $Y$-variable corresponding
to $X_{(n:i)}$.

Recall that an intermediate sequence is a sequence of integers $k_n$
such that $\lim_{n\to\infty} k_n = \lim_{n\to\infty} n/k_n = \infty$.
In accordance with common use and for the clarity of notation, the
dependence on $n$ will be implicit in the sequel.

Define the random measure
\begin{gather} \label{eq:def-tilde-nu_n}
  \tilde \nu_n = \frac 1 k \sum_{i=1}^n \delta_{(\{X_i - b(n/k)\}/\psi
    \circ b(n/k), \{Y_i - m \circ b(n/k)\}/a \circ
    b(n/k))} \; .
  \end{gather}
  Applying \citet[Proposition~5.3]{resnick:1986} (see also
  \citet[Exercise~3.5.7]{resnick:1987}), we straightforwardly obtain
  the following result.
  \begin{prop}     \label{prop:conv-tildenun}
    If Assumption~\ref{hypo:exces} holds, then for any intermediate
    sequence $k$, $\tilde \nu_n$ converges weakly to $\nu$ locally
    uniformly on $(-1/\gamma,\infty] \times [-\infty,\infty]$.
  \end{prop}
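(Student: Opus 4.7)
The plan is to identify $\tilde\nu_n$ as a normalized empirical point measure built from an iid sample and then to verify the hypotheses of the cited result of Resnick via a law-of-large-numbers argument at the level of relatively compact sets.

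First I would compute the mean measure. Since the $(X_i,Y_i)$ are iid copies of $(X,Y)$ and $n/k\to\infty$ (because $k$ is intermediate), Assumption~\ref{hypo:exces} applied along the subsequence $n/k$ immediately gives
\begin{align*}
  \mathbb{E}[\tilde\nu_n(\cdot)] = \frac{n}{k}\, \mathbb{P}\!\left(\left(\frac{X_1 - b(n/k)}{\psi\circ b(n/k)},\; \frac{Y_1 - m\circ b(n/k)}{a\circ b(n/k)}\right) \in \cdot\right) \longrightarrow \nu(\cdot)
\end{align*}
vaguely on $(-1/\gamma,\infty]\times[-\infty,\infty]$.

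Next I would control the fluctuations. For any relatively compact Borel set $A$ with $\nu(\partial A)=0$, write $p_n(A)$ for the probability above evaluated at $A$; then $k\,\tilde\nu_n(A)$ is binomial with parameters $(n,p_n(A))$, so
\begin{align*}
  \mathrm{Var}\bigl(\tilde\nu_n(A)\bigr) \;\leq\; \frac{n\, p_n(A)}{k^2} \;=\; \frac{1}{k}\cdot\frac{n}{k}\, p_n(A) \;\longrightarrow\; 0
\end{align*}
because $k\to\infty$ and $(n/k)p_n(A)\to\nu(A)<\infty$. Thus $\tilde\nu_n(A)\to\nu(A)$ in $L^2$ for each such set. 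Combined with the vague convergence of the mean measures, this is precisely the setting covered by \citet[Proposition~5.3]{resnick:1986} (equivalently \citet[Exercise~3.5.7]{resnick:1987}), which delivers vague convergence in probability of the random measure $\tilde\nu_n$ to the deterministic measure $\nu$.

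To upgrade from vague convergence in probability to locally uniform convergence of the distribution functions $\tilde F_n(x,y):=\tilde\nu_n([0,x]\times(-\infty,y])$ towards $F(x,y)=\nu([0,x]\times(-\infty,y])$, I would invoke the standard Pólya-type argument: each $\tilde F_n$ is monotone in both arguments, $F$ is continuous by Assumption~\ref{hypo:exces}, and pointwise in-probability convergence of a monotone sequence with continuous limit is automatically uniform on compact subsets. The probabilistic core of the argument is thus simply the $L^2$ law of large numbers for iid triangular arrays; the only mildly delicate point, and arguably the main obstacle, is bookkeeping with the compactification at $\pm\infty$ and checking that the continuity hypothesis on $\nu$ built into Assumption~\ref{hypo:exces} matches exactly what the cited Resnick result requires to conclude vague convergence of random measures.
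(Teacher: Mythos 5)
Your proposal is correct and follows essentially the same route as the paper, which gives no proof at all beyond noting that the result follows straightforwardly from \citet[Proposition~5.3]{resnick:1986}: your mean-measure computation plus the binomial variance bound is exactly the law-of-large-numbers argument underlying that proposition, and the Pólya/subsequence step handling the upgrade to locally uniform convergence of the distribution functions is the standard routine the authors leave implicit. No gaps.
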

  Consequently, $\tilde \nu_n([0,x]\times (-\infty,y])$ converges weakly
  locally uniformly to $F(x,y)$. But $\tilde \nu_n$ is not an estimator, since
  its definition involves the unknown functions $a$ and $m$.  In order to
  define estimators of these functions, and of the distribution function $F$,
  we will need to prove convergence of integrals of unbounded functions with
  respect to the random measure $\tilde \nu_n$.  Therefore we need to
  strengthen Assumption~\ref{hypo:exces}.

\begin{hypo}  \label{hypo:moments}
  There exists $p^*>0$, $q^*>0$ such that for any
  $\epsilon\in(0,1/\gamma)$,
  \begin{align}
    \lim_{n\to\infty} \int_{-\epsilon}^\infty \int_{-\infty}^\infty
    |x|^{p^*} |y|^{q^*} \nu_n(\mathrm d x,\mathrm d y) = \int_{-\epsilon}^\infty
    \int_{-\infty} ^\infty |x|^{p^*} |y|^{q^*} \nu(\mathrm d x,\mathrm d y) \; .
  \label{eq:convintegrale}
  \end{align}

\end{hypo}

Condition~(\ref{eq:convintegrale}) can be seen as a strengthening
of~(\ref{eq:loi-limite}) and~(\ref{eq:weak-convergence}) in order to
obtain the convergence of conditional moments.  Under Assumption~2,  for all $0< p \leq p^*$
and $0<q \leq q^*$, it holds that
  \begin{gather}
    \lim_{t\to\infty} \frac{ \mathbb{E}[ (X-t)^p |Y - m(t)|^q \mid
      X>t]}{\psi^p(t) a^q(t)} = \int_0^\infty
    \int_{-\infty}^\infty x^p |y|^q \nu(\mathrm d x,\mathrm d y) \; .
  \label{eq:moments-conditionnels}
\end{gather}

For the reason mentioned in Remark~\ref{rem:unidim},
Assumption~\ref{hypo:exces} implies the convergence~(\ref{eq:convintegrale})
with $q^*=0$ and any $p^* < 1/\gamma$. Conversely, 
Assumption~\ref{hypo:moments} implies $\gamma<1/p^*$.  In applications, it
will be assumed that $q^*\geq2$.  The function $a$ and the limiting measure
$\nu$ are defined up to a change of scale, thus, without loss of generality, we
assume henceforth that
  \begin{gather} \label{eq:standardisation-variance}
    \int_0^\infty \int_{-\infty}^\infty y^2 \, \nu(\mathrm d x, \mathrm d y) =
    \int_{-\infty}^\infty y^2 \Psi(\mathrm d y) = 1 \; .
  \end{gather}

  \begin{prop}  \label{prop:moments-tildenun}
    If Assumptions~\ref{hypo:exces} and~\ref{hypo:moments} hold, then
    for any intermediate sequence $k$ and any continuous function $g$
    such that $|g(x,y)| \leq C(|x|\vee1)^{p^*}(|y|\vee1)^{q^*}$, for any
    $\epsilon\in(0,1/\gamma)$,
  \begin{align}    \label{eq:conv-moments-tildenun}
    \int_{-\epsilon}^\infty \int_{-\infty}^\infty g(x,y) \tilde \nu_n(\mathrm d x, \mathrm d
    y) \to_P \int_{-\epsilon}^\infty \int_{-\infty}^\infty g(x,y) \nu(\mathrm d x,\mathrm d y)
    \; .
  \end{align}
  \end{prop}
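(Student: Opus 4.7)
The proof centers on the observation that $\tilde \nu_n$ is a sum of $n$ i.i.d.\ random point masses, so its mean measure equals $\nu_{n/k}$: for any measurable $h \geq 0$ supported on $\{x > -\epsilon\}$, $\mathbb{E}[\int h \, d\tilde \nu_n] = \int h \, d\nu_{n/k}$. My plan is a truncation argument combined with a Markov estimate for the tail.

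I would pick a continuous cutoff $\phi_M$ with $0 \leq \phi_M \leq 1$, equal to $1$ on $[-\epsilon, M] \times [-M, M]$ and vanishing outside a slightly larger compact set, and split $\int g \, d\tilde \nu_n = \int g\phi_M \, d\tilde\nu_n + \int g(1-\phi_M) \, d\tilde\nu_n$. The function $g\phi_M$ is bounded, continuous, and compactly supported, so Proposition~\ref{prop:conv-tildenun} (weak convergence of $\tilde\nu_n$ to the deterministic measure $\nu$) yields $\int g\phi_M \, d\tilde\nu_n \to \int g\phi_M \, d\nu$ in probability. Dominated convergence, using finiteness of $\int (|x|\vee 1)^{p^*}(|y|\vee 1)^{q^*}\,d\nu$ (a consequence of Assumption~\ref{hypo:moments}), then gives $\int g\phi_M \, d\nu \to \int g \, d\nu$ as $M \to \infty$.

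For the tail, Markov's inequality yields $\mathbb{P}(|\int g(1-\phi_M)\, d\tilde\nu_n| > \delta) \leq \delta^{-1} C \int_{-\epsilon}^\infty h\,(1-\phi_M) \, d\nu_{n/k}$, where $h(x,y) = (|x|\vee 1)^{p^*}(|y|\vee 1)^{q^*}$. The decisive step is to establish that $\int h \, d\nu_{n/k} \to \int h \, d\nu < \infty$; subtracting the convergence $\int h\phi_M \, d\nu_{n/k} \to \int h\phi_M \, d\nu$ (valid for each fixed $M$ because $h\phi_M$ is bounded continuous and $\nu$ is continuous) then gives $\int h(1-\phi_M)\, d\nu_{n/k} \to \int h(1-\phi_M)\, d\nu$, which vanishes as $M \to \infty$ by dominated convergence. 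Combined with the previous paragraph, this yields the claimed convergence in probability.

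The main obstacle is the extension of Assumption~\ref{hypo:moments} from the weight $|x|^{p^*}|y|^{q^*}$ to $(|x|\vee 1)^{p^*}(|y|\vee 1)^{q^*}$. I would decompose $(|x|\vee 1)^{p^*}(|y|\vee 1)^{q^*} \leq 2^{p^*+q^*}(1 + |x|^{p^*} + |y|^{q^*} + |x|^{p^*}|y|^{q^*})$ and treat the four pieces separately: the constant and $|x|^{p^*}$ terms follow from the standard one-dimensional extreme value theory for the marginal of $X$ (valid since $p^* < 1/\gamma$); the joint moment is exactly Assumption~\ref{hypo:moments}; and the pure-$y$ term $\int |y|^{q^*} d\nu_{n/k}$ requires splitting at a small $\delta > 0$, using $|y|^{q^*} \leq \delta^{-p^*}|x|^{p^*}|y|^{q^*}$ on $\{|x| \geq \delta\}$ and weak convergence together with the continuity of $\nu$ on the compact $x$-strip $\{-\epsilon \leq x < \delta\}$.
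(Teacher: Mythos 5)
Your proof follows essentially the same route as the paper's: truncate to a compact set, use the weak convergence of $\tilde\nu_n$ to $\nu$ (Proposition~\ref{prop:conv-tildenun}) for the truncated part, and control the tail through its expectation, which is exactly $\int h(1-\phi_M)\,\mathrm d\nu_{n/k}$, via Markov's inequality and Assumption~\ref{hypo:moments}. In fact you are more careful than the paper at the one delicate point, namely that Assumption~\ref{hypo:moments} is stated for the weight $|x|^{p^*}|y|^{q^*}$ while the proposition requires the weight $(|x|\vee1)^{p^*}(|y|\vee1)^{q^*}$, which is \emph{not} dominated by the former near $x=0$; the paper simply asserts that Assumption~\ref{hypo:moments} yields $\lim_n\iint_{K^c}h\,\mathrm d\nu_{n/k}=\iint_{K^c}h\,\mathrm d\nu$ without comment. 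Your decomposition into four pieces is the right idea, but the last step for the pure-$y$ piece does not quite work as described: on the strip $\{-\epsilon\le x<\delta\}$ the set is compact only in the $x$-direction (or, in the compactified space $(-1/\gamma,\infty]\times[-\infty,\infty]$, it is compact but $|y|^{q^*}$ is unbounded/discontinuous at $y=\pm\infty$), so vague convergence plus continuity of $\nu$ does not give convergence of $\int|y|^{q^*}$ over that strip; what is needed there is uniform integrability of $|y|^{q^*}$ under $\nu_{n/k}$ near $x=0$, which the literal form of Assumption~\ref{hypo:moments} does not provide because the weight $|x|^{p^*}$ degenerates at $x=0$. This is a genuine soft spot, but it is exactly the point the paper's own proof glosses over (and which would be repaired by reading Assumption~\ref{hypo:moments} as holding for the weight $(|x|\vee1)^{p^*}(|y|\vee1)^{q^*}$, as the authors evidently intend); apart from flagging it, your argument is sound and matches the paper's.
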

  For historical interest, we can also mention the following consequence of
  Assumption~\ref{hypo:exces}. This result was obtained by \citet[Theorem
  6.1]{eddy:gale:1981} under the restrictive additional assumption of a
  spherical distribution. Related results can also be found in
  \cite{hashorva:2007asymp,hashorva:2008beta} and \cite{nagaraja:david:1994}.

  \begin{prop} 
    \label{prop:coco} 
    Under Assumption~\ref{hypo:exces}, $(\{X_{(n:n)}-b(n)\}/\psi\circ
    b(n),\{Y_{[n:n]} - m \circ b(n)\}/a\circ b(n))$ converges weakly to $F$.
  \end{prop}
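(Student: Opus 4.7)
The plan is to exploit the exchangeability of the sample $(X_i,Y_i)_{1\leq i\leq n}$ together with the vague convergence $\nu_n\to\nu$ provided by Assumption~\ref{hypo:exces}, thereby reducing the problem to passing to the limit in an integral against $\nu_n$. Assuming (as we may, by continuity of $F_X$) that the maximum of $X_1,\dots,X_n$ is almost surely uniquely attained, one has $(X_{(n:n)},Y_{[n:n]})=(X_I,Y_I)$ where $I=\arg\max_j X_j$. Conditioning on $(X_1,Y_1)$ gives $\mathbb{P}(X_1=\max_j X_j\mid X_1)=F_X(X_1)^{n-1}$, so by exchangeability
\begin{align*}
\mathbb{P}\!\left(\frac{X_{(n:n)}-b(n)}{\psi\circ b(n)}\leq x,\ \frac{Y_{[n:n]}-m\circ b(n)}{a\circ b(n)}\leq y\right)
= n\,\mathbb{E}\bigl[\mathbf{1}\{X_1\leq b(n)+\psi\circ b(n)\cdot x,\ Y_1\leq m\circ b(n)+a\circ b(n)\cdot y\}\,F_X(X_1)^{n-1}\bigr].
\end{align*}

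The change of variables $u=(X_1-b(n))/\psi\circ b(n)$, $v=(Y_1-m\circ b(n))/a\circ b(n)$ recasts the right-hand side as $\int_{(-\infty,x]\times(-\infty,y]}F_X(b(n)+\psi\circ b(n)\cdot u)^{n-1}\,d\nu_n(u,v)$. Two convergences are then available. First, the vague convergence $\nu_n\to\nu$ on $(-1/\gamma,\infty]\times[-\infty,\infty]$ and the continuity of $\nu$ on the boundary of the rectangle (Assumption~\ref{hypo:exces}). Second, using $b(n)=(1/(1-F_X))^\leftarrow(n)$ and the regular variation~(\ref{eq:propriete-psi}) of $\psi$ (Remark~\ref{rem:unidim}), the pointwise convergence
\begin{align*}
F_X\bigl(b(n)+\psi\circ b(n)\cdot u\bigr)^{n-1}\longrightarrow \exp\{-(1+\gamma u)^{-1/\gamma}\}
\end{align*}
holds locally uniformly on $(-1/\gamma,\infty)$.

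The main obstacle is to justify the exchange of limit and integral: the integrand is bounded but not compactly supported in $u$, depends on $n$, and the measures $\nu_n$ carry growing mass as one approaches $u=-1/\gamma$. I would handle this by truncation, fixing $\epsilon\in(0,1/\gamma)$ and $M>0$ and splitting the integral. On the compact slab $[-\epsilon,M]\times[-\infty,\infty]$, vague convergence combined with local uniformity of the convergence of the integrand (and boundedness by $1$) delivers convergence of the restricted integral. On the left piece $\{u<-\epsilon\}$, the factor $F_X(b(n)+\psi\circ b(n)\cdot u)^{n-1}$ is asymptotically bounded by $\exp\{-(1-\gamma\epsilon)^{-1/\gamma}\}$, while the global tail bound $\nu_n([u,\infty]\times[-\infty,\infty])\to(1+\gamma u)^{-1/\gamma}$ of Remark~\ref{rem:unidim} controls the total mass uniformly. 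On the right piece $\{u>M\}$, the same tail bound with $\nu_n([M,\infty]\times[-\infty,\infty])\to(1+\gamma M)^{-1/\gamma}\downarrow 0$ makes the contribution negligible. Letting $\epsilon\uparrow 1/\gamma$ and $M\uparrow\infty$, and using the continuity of $F$ to rule out boundary effects, yields the claimed weak convergence.
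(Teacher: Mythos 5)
Your route is genuinely different from the paper's. The paper applies \citet[Proposition~5.3]{resnick:1986} to get weak convergence of the point process $\sum_{i=1}^n\delta_{(\tilde X_i,\tilde Y_i)}$ to a Poisson process with intensity $\nu$, and then reads off the behaviour of the point with largest first coordinate; you instead establish the exact finite-$n$ identity $\mathbb{P}(\cdot)=n\,\mathbb{E}[\mathbf{1}\{\cdot\}F_X(X_1)^{n-1}]$ and pass to the limit in $\int F_X(b(n)+\psi\circ b(n)u)^{n-1}\,\nu_n(\mathrm d u,\mathrm d v)$. Your approach is more elementary and self-contained, and it has the merit of producing the limit law explicitly, namely $\int_{(-1/\gamma,x]\times(-\infty,y]}\mathrm e^{-(1+\gamma u)^{-1/\gamma}}\,\nu(\mathrm d u,\mathrm d v)$, which is the law of the maximal point of the limiting Poisson process.

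Two steps need repair. First, your control of the region $\{u<-\epsilon\}$ fails as written: $\nu_n(\{u<-\epsilon\}\times[-\infty,\infty])=n\,\mathbb{P}(X<b(n)-\epsilon\psi\circ b(n))\sim n$, so bounding the integrand by its supremum $\approx\exp\{-(1-\gamma\epsilon)^{-1/\gamma}\}$ and multiplying by the mass gives a quantity that diverges; the tail bound of Remark~\ref{rem:unidim} only controls mass to the \emph{right} of a level, not to the left. The correct control must integrate the rapidly decaying factor rather than take its supremum: with $c_n=b(n)-\epsilon\psi\circ b(n)$, the contribution is $n\,\mathbb{E}[\mathbf{1}\{X_1<c_n\}F_X(X_1)^{n-1}]\leq n\int_0^{F_X(c_n)}t^{n-1}\,\mathrm d t=F_X(c_n)^n=\mathbb{P}(X_{(n:n)}<c_n)\to\exp\{-(1-\gamma\epsilon)^{-1/\gamma}\}$, which does vanish as $\epsilon\uparrow1/\gamma$. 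Second, your closing sentence identifies the limit with $F$, but the integral you obtain is not $F(x,y)=\nu([0,x]\times(-\infty,y])$: already the first marginals disagree, since your limit gives the extreme value law $\exp\{-(1+\gamma x)^{-1/\gamma}\}$ for the normalized maximum (as it must), whereas the first marginal of $F$ is $1-(1+\gamma x)^{-1/\gamma}$. So the final identification is a non sequitur; what your computation actually proves is convergence to the law of the maximal Poisson point, whose second marginal coincides with $\Psi$ exactly when $\nu$ is a product measure. (The paper's own proof makes the same unexplained jump from the point-process limit to ``converges to $F$''; your explicit formula makes the discrepancy visible.) A smaller point: Assumption~\ref{hypo:exces} does not imply that $F_X$ is continuous, so the a.s.\ uniqueness of the argmax and the identity $\mathbb{P}(X_1=\max_jX_j\mid X_1)=F_X(X_1)^{n-1}$ require an additional word.
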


  Since $\Psi$ is the second marginal of $F$, this result implies 
  that $\{Y_{[n:n]} - m \circ b(n)\}/a\circ b(n)$ converges weakly to
  $\Psi$. If $F$ is a product measure, (or equivalently if $\nu$ is a
  product measure), then $\{Y_{[n:n]} - m \circ b(n)\}/a\circ b(n)$ is
  asymptotically independent of $\{X_{(n:n)}-b(n)\}/\psi\circ b(n)$ in the
  usual sense that the limiting distribution is a product measure.

  Let us finally mention that \cite{davydov:egorov:2000} obtained functional
  limit theorems for sums of concomitants corresponding to a number $k$ of
  order statistics such that $k/n \nrightarrow 0$.  Therefore their problem
  differs from ours. Their assumptions on the joint distribution of the random
  pairs are much weaker than Assumption~\ref{hypo:exces}, but their results are
  of a very different nature and it does not seem possible to use them to
  derive Propositions~\ref{prop:conv-tildenun}-\ref{prop:moments-tildenun} for
  instance.

\section{Nonparametric estimation of  $\psi$, $a$, $m$ and $F$}
\label{sec:NPestim}
In this section, we introduce nonparametric estimators of the
functions $\psi$, $m$, $a$ and $F$ based on i.i.d. observations
$(X_1,Y_1),\dots, (X_n,Y_n)$ of a bivariate distribution which satisfies
Assumption~\ref{sec:hypo}.

\subsection{Definitions and consistency}
\label{sec:consistency}

In order to estimate nonparametrically the limiting distribution $F$,
we first need nonparametric estimators of the quantities
$\psi(X_{(n:n-k)})$, $m(X_{(n:n-k)})$ and $a(X_{(n:n-k)})$, with $k$
an intermediate sequence, i.e.  such that $k\to\infty$ and $k/n\to0$.
The estimation of $\psi(X_{(n:n-k)})$ is a well known estimation
issue, see e.g.  \citet[Section~4.2]{dehaan:ferreira:2006}. If the
extreme value index $\gamma$ of $X$ is less than~1, then $\psi$ can be
estimated as the mean residual life.  Let $\hat\gamma$ be a consistent
estimator of $\gamma$ (see e.g.
\citet[Chapter~3]{dehaan:ferreira:2006} or
\citet[Chapter~5]{beirlant:goegebeur:teugels:segers:2004}) and define
\begin{gather}\label{eq:NP-psi}
  \hat\psi(X_{(n:n-k)}) = \frac{1-\hat\gamma}k \sum_{i=1}^k \{
  X_{(n:n-i+1)} - X_{(n:n-k)} \} \; .
\end{gather}
It follows straightforwardly from
Proposition~\ref{prop:moments-tildenun} that
$\hat\psi(X_{(n:n-k)})/\psi\circ b(n/k)\to_P1$.
If it is moreover assumed (as in Section~\ref{sec:product} below) that
$\gamma=0$, then the above estimator can be modified accordingly:
\begin{gather}\label{eq:NP-psi-gammazero}
  \hat\psi(X_{(n:n-k)}) = \frac 1 k \sum_{i=1}^k \{ X_{(n:n-i+1)} -
  X_{(n:n-k)} \} \; .
\end{gather}
In order to estimate $m$, define
\begin{gather}\label{eq:mhat-NP}
  \hat m(X_{(n:n-k)}) = \frac{\sum_{i=1}^k Y_{[n:n-i+1]}
    \{X_{(n:n-i+1)} - X_{(n:n-k)}\}} { \sum_{i=1}^k
    \{X_{(n:n-i+1)} - X_{(n:n-k)}\}} \; .
\end{gather}

\begin{prop} 
  \label{prop:estim-m}
  If Assumptions~\ref{hypo:exces} and~\ref{hypo:moments} hold with 
  $p^*\geq1$ and $q^*\geq1$, then, for any intermediate sequence $k$, it holds
  that
  \begin{align*}
    \frac{ \hat m(X_{(n:n-k)}) - m \circ b (n/k)}{a\circ b(n/k)} \to
    _P  \mu \; ,
  \end{align*}
  where $\mu = (1-\gamma) \int_0^\infty \int_{-\infty}^\infty xy \nu(\mathrm d x,\mathrm d y)$.
  If moreover $m(x) = \rho x$ and either $\mu=0$ and $a(x)=O(x)$ or
  $a(x) = o(x)$ then $\hat m(X_{(n:n-k)})/X_{(n:n-k)}$ is a consistent
  estimator of $\rho$.
\end{prop}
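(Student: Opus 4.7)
My plan is to rewrite the normalized estimator as a ratio of integrals against the random measure $\tilde\nu_n$, so that Proposition~\ref{prop:moments-tildenun} can be applied directly. With the shorthand $b=b(n/k)$, $\psi=\psi\circ b(n/k)$, $a=a\circ b(n/k)$, $m=m\circ b(n/k)$ and $U_n=(X_{(n:n-k)}-b)/\psi$, substituting $X_i=b+\psi\tilde X_i$, $Y_i=m+a\tilde Y_i$ into the defining formula of $\hat m(X_{(n:n-k)})$ makes the common factor $\psi$ cancel between numerator and denominator, leaving
\begin{gather*}
\frac{\hat m(X_{(n:n-k)})-m\circ b(n/k)}{a\circ b(n/k)}
= \frac{\int\int_{x>U_n} y(x-U_n)\,\tilde\nu_n(\mathrm d x,\mathrm d y)}
       {\int\int_{x>U_n} (x-U_n)\,\tilde\nu_n(\mathrm d x,\mathrm d y)}.
\end{gather*}

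I then apply Proposition~\ref{prop:moments-tildenun} to the integrands $1$, $x$, $y$ and $xy$, each of which satisfies the growth bound $|g(x,y)|\leq(|x|\vee1)^{p^*}(|y|\vee1)^{q^*}$ as soon as $p^*\geq1$ and $q^*\geq1$. A standard property of intermediate order statistics gives $U_n\to_P 0$. Combined with the continuity of $\nu$ from Assumption~\ref{hypo:exces}, which makes $\int_{-\epsilon}^{\epsilon}\int|xy|\,\nu$ and $\int_{-\epsilon}^\epsilon\int|x|\,\nu$ vanish as $\epsilon\to 0$, a routine $\epsilon$-$\eta$ argument yields
\begin{gather*}
\int\int_{x>U_n} xy\,\tilde\nu_n \;\to_P\; \int_0^\infty\!\int_{-\infty}^\infty xy\,\nu,
\qquad
\int\int_{x>U_n} x\,\tilde\nu_n \;\to_P\; \int_0^\infty\!\int_{-\infty}^\infty x\,\nu,
\end{gather*}
while the two ``cross'' contributions $U_n\int\int_{x>U_n} y\,\tilde\nu_n$ and $U_n\int\int_{x>U_n}\tilde\nu_n$ are $o_P(1)$, because $U_n\to_P 0$ and each remaining integral is bounded in probability by the same Proposition. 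The deterministic denominator limit is computed explicitly using the first marginal of $\nu$ given in Remark~\ref{rem:unidim}: $\int_0^\infty(1+\gamma x)^{-1/\gamma}\,\mathrm d x=1/(1-\gamma)$, finite because Assumption~\ref{hypo:moments} forces $\gamma<1/p^*\leq 1$. The continuous mapping theorem then produces the claimed limit $(1-\gamma)\int_0^\infty\!\int xy\,\nu=\mu$.

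For the second assertion I substitute $m(x)=\rho x$ into the first part, which gives the decomposition
\begin{gather*}
\frac{\hat m(X_{(n:n-k)})}{X_{(n:n-k)}}
= \rho\,\frac{b(n/k)}{X_{(n:n-k)}}
  + \frac{a\circ b(n/k)}{X_{(n:n-k)}}\,\bigl(\mu+o_P(1)\bigr).
\end{gather*}
Since $(X_{(n:n-k)}-b(n/k))/\psi\circ b(n/k)\to_P 0$ and $\psi(x)/x$ is bounded for $\gamma\geq 0$, one has $X_{(n:n-k)}/b(n/k)\to_P 1$, so the first summand tends to $\rho$. The second summand is $o_P(1)$ in either scenario: if $a(x)=o(x)$ then $a\circ b(n/k)/X_{(n:n-k)}\to 0$ directly; if $\mu=0$ and $a(x)=O(x)$, the coefficient is $O_P(1)$ while the parenthesized factor is $o_P(1)$. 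The only genuinely delicate step throughout is the handling of the random truncation at $U_n$, and it is precisely the continuity of $\nu$ together with the moment strengthening supplied by Assumption~\ref{hypo:moments} that makes that step routine.
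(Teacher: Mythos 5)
Your proposal is correct and follows essentially the same route as the paper: both rewrite the normalized estimator as a ratio of integrals against $\tilde\nu_n$, invoke Proposition~\ref{prop:moments-tildenun} for the integrands $x$, $y$, $xy$, dispose of the random truncation via $\tilde x_n=o_P(1)$, and identify the denominator limit $1/(1-\gamma)$ from the first marginal of $\nu$. Your treatment of the boundary strip near $x=0$ and of the two cases in the second assertion is, if anything, slightly more explicit than the paper's.
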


\begin{rem}
  A sufficient condition for $\mu=0$ is the symmetry of the measure $\nu$ with
  respect to the second variable. This happens e.g. when if $\nu$ is a product
  measure and the distribution $\Psi$ is symmetric.
\end{rem}

\begin{rem} 
  As explained above, the assumption $p^\star\geq1$ in
  Proposition~\ref{prop:estim-m} implies that $\gamma<1$. If $\gamma\geq1$,
  which implies that $|X|$ has an infinite mean, the previous estimators of
  $\psi$ and $m$ need not be consistent. Consistent estimators of $\gamma$ and
  $\psi$ can be found in \cite[Section 4.2]{dehaan:ferreira:2006} but it is not
  clear how to define a consistent estimator of $m$ in this context.
\end{rem}

We now estimate $a(X_{(n:n-k)})$. Many estimators can be defined, each
needing an ad hoc moment assumption. The one we have chosen needs $q^*
\geq 2$ in Assumption~\ref{hypo:moments}.  Define
\begin{gather} \label{eq:def-hata}
  \hat a(X_{(n:n-k)}) = \left\{\frac1k \sum_{i=1}^k \{Y_{[n:n-i+1]} -
  \hat m(X_{(n:n-k)}) \}^2 \right\}^{1/2}\; .
\end{gather}

\begin{prop} 
   \label{prop:estim-a} 
   If Assumptions~\ref{hypo:exces} and Assumption~\ref{hypo:moments} hold with
   $p^*\geq1$ and $q^*\geq2$, and if $\mu=0$, then, for any intermediate
   sequence $k$, it holds that
  \begin{gather*}
    \hat a(X_{(n:n-k)})/ a \circ b(n/k) \to _P 1 \; .
  \end{gather*}
\end{prop}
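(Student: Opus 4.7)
The plan is to expand $\hat a^2$ around the deterministic quantities used to normalize the concomitants, reduce to the convergence of sample moments of the normalized $Y$-values, and identify those moments with integrals against the random measure $\tilde\nu_n$ of Section~\ref{sec:assump-prelim}. Set $t_k = b(n/k)$, $a_k = a\circ b(n/k)$, $m_k = m\circ b(n/k)$, and introduce $Z_i = (Y_{[n:n-i+1]} - m_k)/a_k$ for $i = 1,\ldots,k$ together with $M_k = (\hat m(X_{(n:n-k)}) - m_k)/a_k$. First I would expand
\begin{gather*}
\frac{\hat a^2(X_{(n:n-k)})}{a_k^2}
= \frac{1}{k}\sum_{i=1}^k (Z_i - M_k)^2
= S_2 - 2 M_k S_1 + M_k^2,
\qquad S_\ell := \frac{1}{k}\sum_{i=1}^k Z_i^\ell .
\end{gather*}
Proposition~\ref{prop:estim-m} together with the hypothesis $\mu = 0$ yields $M_k \to_P 0$, so it would be enough to show $S_1 = O_P(1)$ and $S_2 \to_P 1$; a continuous-mapping step then gives $\hat a / a_k \to_P 1$.

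Next I would express $S_\ell$ in terms of the random measure:
\begin{gather*}
S_\ell = \int_{\hat X^*}^{\infty}\int_{-\infty}^{\infty} y^\ell\,\tilde\nu_n(\mathrm d x,\mathrm d y),
\qquad \hat X^* := \frac{X_{(n:n-k)} - t_k}{\psi\circ b(n/k)}.
\end{gather*}
The rescaled threshold $\hat X^*$ tends to $0$ in probability: this follows from Proposition~\ref{prop:conv-tildenun} and Remark~\ref{rem:unidim} by inverting the marginal convergence $\tilde\nu_n([x,\infty]\times[-\infty,\infty]) \to \bar P_\gamma(x)$ at $x=0$. With $g(x,y) = y^\ell$ (which satisfies the growth bound of Proposition~\ref{prop:moments-tildenun} as soon as $\ell \le q^* = 2$), that proposition yields, for any $\epsilon \in (0,1/\gamma)$,
\begin{gather*}
\int_{-\epsilon}^{\infty}\int_{-\infty}^{\infty} y^\ell\,\tilde\nu_n(\mathrm d x,\mathrm d y)
\to_P \int_{0}^{\infty}\int_{-\infty}^{\infty} y^\ell\,\nu(\mathrm d x,\mathrm d y),
\end{gather*}
using that $\nu$ is supported in $[0,\infty)\times\mathbb R$.

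The remaining step is to replace the deterministic lower limit $-\epsilon$ by the random one $\hat X^*$. For an arbitrary $\delta > 0$, on the event $\{|\hat X^*| \le \delta\}$, whose probability tends to $1$, the discrepancy between the two integrals is bounded in modulus by $\int_{-\epsilon}^{\delta}\int |y|^\ell\,\tilde\nu_n$. Writing this as $\int_{-\epsilon}^{\infty}|y|^\ell\,\tilde\nu_n - \int_{\delta}^{\infty}|y|^\ell\,\tilde\nu_n$ and applying Proposition~\ref{prop:moments-tildenun} to each term (the second via a continuous-cutoff approximation of $\mathbf{1}_{[\delta,\infty)}$ and a truncation on the $y$ variable), the difference tends in probability to $\int_{0}^{\delta}|y|^\ell\,\nu$, which vanishes as $\delta\downarrow 0$ because the continuity of $F$ required in Assumption~\ref{hypo:exces} forbids atoms of $\nu$ on $\{0\}\times\mathbb R$. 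Letting $\delta\downarrow 0$ gives $S_\ell \to_P \int_0^{\infty} y^\ell\,\nu$; the case $\ell=2$ produces the limit $1$ thanks to~(\ref{eq:standardisation-variance}), while the case $\ell=1$ only produces boundedness in probability, which is all that is needed since $M_k S_1\to_P 0$ anyway.

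The hard part is precisely this last step: transferring Proposition~\ref{prop:moments-tildenun}, stated for a fixed lower bound $-\epsilon$ bounded away from the edge of the support of $\nu$, to a random lower bound $\hat X^*$ sitting exactly at that edge, while the integrand $y^\ell$ is unbounded. The sandwiching above, combined with the continuity of $F$ at $x=0$, resolves this without introducing further assumptions.
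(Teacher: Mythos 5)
Your proposal is correct and follows essentially the same route as the paper: the identical expansion $\hat a^2/a^2\circ b(n/k) = \int_{\tilde x_n}^\infty\int y^2\,\tilde\nu_n - 2\xi_n\int_{\tilde x_n}^\infty\int y\,\tilde\nu_n + \xi_n^2$, with $\xi_n=o_P(1)$ from Proposition~\ref{prop:estim-m}, followed by Proposition~\ref{prop:moments-tildenun} and the normalization~(\ref{eq:standardisation-variance}). The only difference is that you spell out the replacement of the random lower limit $\tilde x_n$ by $0$ (via $\tilde x_n\to_P 0$ and the absence of mass of $\nu$ near the boundary), a step the paper absorbs into its ``$+\,o_P(1)$'' without comment.
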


\begin{rem}
  Under Assumption~\ref{hypo:exces} with $q^*\geq2$, the first moment of $\Psi$
  is finite and if $\mu\ne0$, then $\hat a(X_{(n:n-k)})/a\circ b(n/k) \to _P
  \tau$, with
  \begin{gather} \label{eq:def:tau}
    \tau^2 = 1 - 2 \mu \int_{-\infty}^\infty y \Psi(\mathrm d y) + \mu^2 \; .
  \end{gather}

\end{rem}

We can now consider the nonparametric estimator of the limiting joint
distribution $F$. Define
\begin{multline}
  \hat F(x,y) \\
  = \frac1k \sum_{i=1}^k \mathbf{1}_{\{X_{(n:n-i+1)} \leq X_{(n:n-k)} + \hat
    \psi(X_{(n:n-k)}) x \}} \times \mathbf{1}_{\{Y_{[n:n-i+1]} \leq \hat m(
    X_{(n:n-k)}) + \hat a(X_{(n:n-k)}) y\}} \; .
\end{multline}
Denote $u_n = {\hat\psi(X_{(n:n-k)})}/{\psi\circ b(n/k)}$ and 
\begin{gather} \label{eq:def-xin} \tilde x_n = \frac{X_{(n:n-k)} - b(n/k)}{\psi
    \circ b(n/k)} \; , \ \ v_n = \frac{\hat a(X_{(n:n-k)})}{a \circ b(n/k)} \;
  , \ \ \ \xi_n = \frac{\hat m(X_{(n:n-k)}) - m \circ b (n/k)}{a \circ b (n/k)}
  \; .
  \end{gather}
  Then
  \begin{gather*}
    \hat F(x,y) = \tilde \nu_n([\tilde x_n,\tilde x_n+ u_n x]\times
    (-\infty,\xi_n+v_n y]) \; .
  \end{gather*}
  Thus Propositions~\ref{prop:conv-tildenun},~\ref{prop:estim-m}
  and~\ref{prop:estim-a} easily yield the consistency of $\hat
  F(x,y)$, as stated in the following theorem.
  \begin{theo} \label{theo:estim-F} Under Assumptions~\ref{hypo:exces}
    and~\ref{hypo:moments} with $p^*\geq1$ and $q^*\geq2$, if $\mu=0$, then for
    any intermediate sequence $k$, $\hat F(x,y)$ converges weakly to $F(x,y)$.
\end{theo}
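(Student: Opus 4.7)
The plan is to exploit the representation
\begin{gather*}
  \hat F(x,y) = \tilde\nu_n\bigl([\tilde x_n,\tilde x_n + u_n x]\times (-\infty,\xi_n + v_n y]\bigr)
\end{gather*}
recorded just above the theorem and to show that the four random perturbations $(\tilde x_n,u_n,\xi_n,v_n)$ all converge in probability to $(0,1,0,1)$, so that a continuous‑mapping / Slutsky argument against the empirical consistency of $\tilde\nu_n$ delivers the result.

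First I would collect the in‑probability limits of these four quantities. That $u_n\to_P 1$ is already noted just after~\eqref{eq:NP-psi}; $\xi_n\to_P \mu = 0$ is Proposition~\ref{prop:estim-m} combined with the hypothesis $\mu=0$; and $v_n\to_P 1$ is Proposition~\ref{prop:estim-a}. The remaining piece $\tilde x_n\to_P 0$ follows from classical univariate extreme value theory applied to the intermediate order statistic $X_{(n:n-k)}$ with the canonical choice $b=(1/(1-F_X))^\leftarrow$ of Remark~\ref{rem:unidim}; alternatively it is a direct consequence of Proposition~\ref{prop:conv-tildenun} restricted to the first coordinate of $\tilde\nu_n$.

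Next I would promote the vague convergence $\tilde\nu_n\Rightarrow\nu$ of Proposition~\ref{prop:conv-tildenun} to uniform convergence, in probability, on compact subsets of $(-1/\gamma,\infty)\times[-\infty,\infty]$ of the map $(s,t)\mapsto \tilde\nu_n([s,\infty)\times(-\infty,t])$ toward its continuous limit $(s,t)\mapsto \nu([s,\infty)\times(-\infty,t])$. This is a standard P\'olya‑type upgrade, available because the limit function is continuous by Assumption~\ref{hypo:exces}. Setting $\Phi(s,s',t) := \nu([s,s']\times(-\infty,t])$, which is continuous in $(s,s',t)$ for $s>-1/\gamma$ and $s'\geq s$, this uniform convergence yields, by a routine Slutsky argument,
\begin{gather*}
  \tilde\nu_n\bigl([s_n,s'_n]\times (-\infty,t_n]\bigr) \to_P \Phi(s,s',t)
\end{gather*}
whenever $(s_n,s'_n,t_n)\to_P (s,s',t)$. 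Applying this with $s_n = \tilde x_n \to_P 0$, $s'_n = \tilde x_n + u_n x \to_P x$ and $t_n = \xi_n + v_n y \to_P y$ gives $\hat F(x,y)\to_P \Phi(0,x,y)=F(x,y)$; since the limit is deterministic, convergence in probability is equivalent to the stated weak convergence.

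The main (and essentially only) technical point is the uniform‑on‑compacts upgrade of vague convergence to a continuous limit in the middle step; once this is in hand, the remainder of the argument is a routine Slutsky‑type plug‑in combining the consistency of $\hat\psi$, $\hat m$, $\hat a$ and of the intermediate order statistic $X_{(n:n-k)}$.
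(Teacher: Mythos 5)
Your proposal is correct and follows essentially the same route as the paper, which likewise reduces the claim to the representation $\hat F(x,y)=\tilde\nu_n([\tilde x_n,\tilde x_n+u_nx]\times(-\infty,\xi_n+v_ny])$ and invokes Propositions~\ref{prop:conv-tildenun},~\ref{prop:estim-m} and~\ref{prop:estim-a} together with a Slutsky-type plug-in. The only remark is that the ``uniform-on-compacts upgrade'' you flag as the main technical point is already built into the statement of Proposition~\ref{prop:conv-tildenun} (locally uniform weak convergence), so the paper treats the whole argument as immediate.
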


We can also define an estimator of the second marginal $\Psi$ of $F$.
Denote
\begin{align} \label{eq:def-hatpsi}
  \hat \Psi(y) & = \frac1k \sum_{i=1}^k \mathbf{1}_{\{Y_{[n:n-i+1]} \leq
    \hat m( X_{(n:n-k)}) + \hat a(X_{(n:n-k)}) y\}} \; .
\end{align}
Then, under the assumptions of Theorem~\ref{theo:estim-F}, $\hat
\Psi$ also converges to $\Psi$.  Note that if $\mu\ne0$, then
$\hat \Psi(z)$ converges weakly to $\Psi(\mu + \tau z)$, with
$\tau$ defined in~(\ref{eq:def:tau}).

\subsection{Central limit theorems}
\label{subsec:clt}
In order to obtain central limit theorems, we need to strengthen
Assumptions~\ref{hypo:exces} and~\ref{hypo:moments}.

\begin{hypo} \label{hypo:second-ordre-modifie} 
  There exist positive real numbers $p^\dag$ and $q^\dag$, a function
  $c$ such that $\lim_{t\to\infty} c(t) = 0$ and a Radon measure
  $\mu^\dag$ on $(-1/\gamma,\infty)\times(-\infty,\infty)$ such that
  for any $\epsilon\in(0,1/\gamma)$, and any measurable function $h$
  such that $|h(x,y)|\leq (|x|\vee1)^{p^\dag} (|y|\vee1)^{q^\dag}$, it
  holds that
  \begin{align*}
    \int_{-\epsilon}^\infty \int_{-\infty}^\infty |h(x,y)| \mu^\dag(\mathrm d
    x,\mathrm d y) < \infty \; ,
  \end{align*}
and
  \begin{multline}
    \left|\int_{-\epsilon}^\infty \int_{-\infty}^\infty h(x,y)
      \nu_n(\mathrm d x, \mathrm d y) - \int_{-\epsilon}^\infty
      \int_{-\infty}^\infty h(x,y) \nu(\mathrm d x, \mathrm d y) \right| \\
    \leq c \circ b(n) \int_{-\epsilon}^\infty \int_{-\infty}^\infty
    |h(x,y)| \mu^\dag(\mathrm d x,\mathrm d y) \; . \label{eq:second-ordre-modifie}
  \end{multline}
\end{hypo}
\begin{rem} 
  Taking $h = \mathbf{1}_{[0,x]\times(-\infty,y]}$,
  (\ref{eq:second-ordre-modifie}) yields
  \begin{align} \label{eq:second-ordre-usuel} |F_n(x,y) - F(x,y)| \leq
    c\circ b(n) \mu^\dag([0,x]\times(-\infty,y])
\end{align}
where $F_n(x,y) = \nu_n([0,x]\times(-\infty,y])$. This is a classical second
order condition (see e.g.  \citet[Condition~4.1]{dehaan:resnick:1993}), which
gives a non uniform rate of convergence in Condition~(\ref{eq:loi-limite}). The
condition~(\ref{eq:second-ordre-modifie}) is stronger
than~(\ref{eq:second-ordre-usuel}) in the sense that it moreover gives a rate
of convergence for conditional moments. Since
Assumption~\ref{hypo:second-ordre-modifie} implies that the first marginal of
$F$ has finite moments up to the order $p^\dag$, it also implies that
$\gamma<1/p^\dag$.
\end{rem}
For a sequence $k$ depending on $n$, define the random measure
$\tilde\mu_n$ by
 \begin{gather*}
   \tilde \mu_n = k^{1/2} \left( \tilde\nu_n - \nu \right) \;
  \end{gather*}
  and denote
  \begin{gather*}
  W_n(x,y) = \tilde \mu_n((x,\infty)\times(-\infty,y]).
\end{gather*}
The next result states the functional convergence of $W_n$ in the
space $\mathcal D((-1/\gamma,\infty)\times(-\infty,\infty))$ of
right-continuous and left-limited functions, endowed with Skorohod's
$J_1$ topology.
\begin{prop} \label{prop:clt-fidi}
  If Assumption~\ref{hypo:second-ordre-modifie} holds with $p^\dag\geq2$ and
  $q^\dag\geq 4$ and if the sequence $k$ is chosen such that
  \begin{align} \label{eq:condition-second-ordre-k}
    \lim_{n\to \infty} k^{1/2} c \circ b(n/k) = 0 \;,
  \end{align}
  then $k$ is an intermediate sequence and the sequence of processes $
  W_n$ converges weakly in $\mathcal
  D((-1/\gamma,\infty)\times(-\infty,\infty))$ to a Gaussian process $W$ with
  covariance function
  \begin{gather}
    \mathrm{cov}(W(x,y),W(x',y')) = \nu([x\vee x',+\infty] \times
   [-\infty,y\wedge y']) \; . \label{eq:cov-W}
  \end{gather}
  Moreover, the sequence of random measures $\tilde \mu_n$ converges
  weakly (in the sense of finite dimensional distributions) to an
  independently scattered Gaussian random measure $W$ with control
  measure~$\nu$ on the space of measurable functions $g$ such that
  $|g(x,y)|^2 \leq C(x\vee1)^{p^\dag}(|y|\vee1)^{q^\dag}$, i.e.
  $W(g)$ is a centered Gaussian random variable with variance
\begin{gather*}
  \int_{-1/\gamma}^\infty \int_{-\infty}^\infty g^2(s,t) \, \nu(\mathrm d s , \mathrm d t )
\end{gather*}
and $W(g)$, $W(h)$ are independent if $\int gh \, \mathrm d \nu =0$.
\end{prop}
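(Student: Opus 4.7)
The plan is to decompose $W_n = W_n^\circ + R_n$, where $W_n^\circ(x,y) = k^{1/2}\{\tilde\nu_n(A_{x,y}) - \mathbb{E}[\tilde\nu_n(A_{x,y})]\}$ is a centered empirical component and $R_n(x,y)$ is a deterministic bias, with $A_{x,y} = (x,\infty)\times(-\infty,y]$. Since $\mathbb{E}[\tilde\nu_n(A)] = \nu_{n/k}(A)$, taking $h = \mathbf{1}_{A_{x,y}}$ in Assumption~\ref{hypo:second-ordre-modifie} yields
\begin{align*}
|R_n(x,y)| \leq k^{1/2}\, c\circ b(n/k)\, \mu^\dag(A_{x,y}),
\end{align*}
which tends to $0$ by~(\ref{eq:condition-second-ordre-k}), uniformly on compact subsets of $(-1/\gamma,\infty)\times(-\infty,\infty)$ because $\mu^\dag$ is Radon. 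Note that condition~(\ref{eq:condition-second-ordre-k}) together with $c\to 0$ forces $n/k\to\infty$ and $k\to\infty$, so $k$ is automatically intermediate.

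For finite-dimensional convergence of $W_n^\circ$, I would write $W_n^\circ(x,y) = k^{-1/2}\sum_{i=1}^n(\mathbf{1}_{T_i\in A_{x,y}} - p_n(x,y))$, where $T_i$ is the $i$-th normalized observation and $p_n(x,y) = (k/n)\nu_{n/k}(A_{x,y})$. A direct computation gives
\begin{align*}
\mathrm{cov}(W_n^\circ(x,y), W_n^\circ(x',y')) = \frac{n}{k}\mathbb{P}(A_{x,y}\cap A_{x',y'}) - \frac{n}{k}p_n(x,y)p_n(x',y'),
\end{align*}
with $A_{x,y}\cap A_{x',y'} = (x\vee x',\infty)\times(-\infty,y\wedge y']$. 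The first term converges to $\nu(A_{x,y}\cap A_{x',y'})$ by Assumption~\ref{hypo:second-ordre-modifie}, while the second vanishes since it equals $p_n(x,y)\cdot\nu_{n/k}(A_{x',y'})$ and $p_n\to 0$. Since each summand is bounded by $k^{-1/2}$, Lindeberg is trivial, and the Cram\'er-Wold device yields the stated Gaussian limit covariance.

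For tightness in the Skorohod $J_1$ topology on $\mathcal{D}((-1/\gamma,\infty)\times(-\infty,\infty))$, it suffices to establish tightness on each compact rectangle of the index space. I would apply a Bickel-Wichura type moment criterion: for a rectangle $B$ in the index space, the increment $\Delta_B W_n^\circ$ equals a centered empirical sum over an image rectangle $B^\ast$ in the ambient space, and
\begin{align*}
\mathbb{E}[(\Delta_B W_n^\circ)^2] \leq \frac{n}{k}\mathbb{P}(B^\ast) \leq \nu(B^\ast) + k^{1/2}c\circ b(n/k)\mu^\dag(B^\ast),
\end{align*}
which is dominated by a finite continuous measure on any compact rectangle. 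This is the main obstacle of the proof: the index set is unbounded and the family $\{A_{x,y}\}$ is a two-parameter monotone class, so one must carefully handle the behavior as $x\to\infty$ and $|y|\to\infty$, which is absorbed by the Radon property of $\nu$ and $\mu^\dag$ and by compact-set restriction intrinsic to the topology of $\mathcal{D}$ on a non-compact product.

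For the finite-dimensional convergence of $\tilde\mu_n(g)$ to $W(g)$ with $|g(x,y)|^2\leq C(|x|\vee 1)^{p^\dag}(|y|\vee 1)^{q^\dag}$, the same decomposition applies. The bias $k^{1/2}|\mathbb{E}[\tilde\nu_n(g)] - \nu(g)|\to 0$ follows from Assumption~\ref{hypo:second-ordre-modifie} applied to $h = |g|$, whose growth is bounded by $\sqrt{C}(|x|\vee 1)^{p^\dag/2}(|y|\vee 1)^{q^\dag/2}$ and hence satisfies the hypothesis. The variance of the centered part converges to $\int g^2\,d\nu$ via Assumption~\ref{hypo:second-ordre-modifie} with $h = g^2$. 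Lindeberg is verified by truncation, exploiting $p^\dag\geq 2$ and $q^\dag\geq 4$ to bound the truncated second moment by a vanishing multiple of $k^{-\delta/2}\int|g|^{2+\delta}\,d\nu$ for small $\delta>0$. Finally, independence of $W(g)$ and $W(h)$ when $\int gh\,d\nu = 0$ follows from the bilinearity of the limit covariance applied to $g\pm h$ and the Gaussianity of the joint limit.
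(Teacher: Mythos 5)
Your decomposition of $W_n$ into a centered empirical part plus a deterministic bias, your treatment of that bias via Assumption~\ref{hypo:second-ordre-modifie} and~(\ref{eq:condition-second-ordre-k}), and your Lindeberg/Cram\'er--Wold argument for the finite-dimensional distributions all coincide with the paper's proof. The two places where you depart from the paper are precisely where the proposal has genuine gaps.

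First, tightness. The bound $\mathbb{E}[(\Delta_B W_n^\circ)^2]\leq \nu(B^\ast)+o(1)$ controls the second moment of a \emph{single} increment and is not a Bickel--Wichura condition: that criterion requires a bound on a \emph{product} of increments over neighbouring disjoint blocks, e.g.\ $\mathbb{E}[(\Delta_B W_n^\circ)^2(\Delta_C W_n^\circ)^2]\leq C\,\lambda(B)\lambda(C)$ for a finite continuous measure $\lambda$, so that the exponents on the controlling measure sum to more than one. A variance bound alone cannot control the modulus of continuity (it is satisfied by processes that are not tight in $\mathcal D$), so as written this step would fail; making it work requires computing fourth-order mixed moments of the multinomial-type counts, which is the real content of the tightness argument and is missing. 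The paper bypasses this computation by empirical-process theory: the indicators of the sets $(x,\infty)\times(-\infty,y]$ form a VC class satisfying the uniform entropy condition, and \citet[Example~2.11.8]{vandervaart:wellner:1996} with $c_{n,i}=k^{-1/2}$ yields tightness on compacts directly.

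Second, the Lindeberg step for unbounded $g$. Your bound by a multiple of $k^{-\delta/2}\int|g|^{2+\delta}\,\mathrm d\nu$ presupposes a $(2+\delta)$-moment of $g$ under $\nu$, but the hypothesis $|g|^2\leq C(|x|\vee1)^{p^\dag}(|y|\vee1)^{q^\dag}$ puts $g^2$ exactly at the maximal growth covered by Assumption~\ref{hypo:second-ordre-modifie}; $|g|^{2+\delta}$ exceeds it and need not be $\nu$-integrable. The Lindeberg condition is still verifiable, but via uniform integrability (apply~(\ref{eq:second-ordre-modifie}) to $h=g^2\mathbf{1}_{\{|g|>M\}}$ and let $M\to\infty$ along with $n$) rather than via a Lyapunov bound. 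The paper avoids the issue entirely by a different route: it approximates $g$ in $L^2(\nu)$ by a smooth compactly supported $h$, deduces the convergence of $\int h\,\mathrm d\tilde\mu_n$ from the already-proved functional convergence of $W_n$ by integration by parts, and controls $\int(g-h)\,\mathrm d\tilde\mu_n$ in $L^2$. Your direct approach is workable and arguably more elementary, but only after the Lindeberg verification is repaired as indicated.
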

The proof is in section~\ref{sec:preuves}.  Applying
Proposition~\ref{prop:clt-fidi}, we easily obtain the following
corollary. For $i,j\geq0$, denote $g_{i,j}(x,y) =
x^iy^j\mathbf{1}_{\{x>0\}}$.
\begin{coro} \label{coro:tilde}
  Under the assumptions of Proposition~\ref{prop:clt-fidi} and if
  moreover $\mu= 0$, then
  \begin{align*}
    k^{1/2} \left\{ \frac{X_{(n:n-k)} - b(n/k)}{\psi \circ b(n/k)} \;,
      \frac{ \hat m(X_{(n:n-k)}) - m\circ b(n/k) }{a \circ b(n/k)} \;,
      \frac{\hat a(X_{(n:n-k)})}{a \circ b (n/k)} - 1 \right \}
  \end{align*}
  converges jointly  
  with $k^{1/2}(\tilde \nu_n-\nu)$ to a Gaussian vector which can be
  expressed as 
  $$
  (W(g_{0,0}), (1-\gamma) W(g_{1,1}), \frac12 W(g_{0,2})) \; .
  $$
\end{coro}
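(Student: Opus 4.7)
The overall strategy is to express each of the three statistics as a linear functional of the random measure $\tilde\mu_n = k^{1/2}(\tilde\nu_n - \nu)$ up to an $o_P(1)$ remainder, so that Proposition~\ref{prop:clt-fidi} can be invoked directly. The relevant integrands will be exactly $g_{0,0}$, $g_{1,1}$ and $g_{0,2}$, for which the moment conditions $p^\dag \geq 2$ and $q^\dag \geq 4$ are precisely what is needed to apply the CLT.

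For the first coordinate, the definition of the $(k+1)$-th order statistic gives $\tilde\nu_n((\tilde x_n,\infty]\times[-\infty,\infty]) = 1$ (exactly, in the no-tie case), while $\nu((x,\infty]\times[-\infty,\infty]) = (1+\gamma x)^{-1/\gamma}$. Since $\tilde x_n \to_P 0$ by Proposition~\ref{prop:conv-tildenun}, a first-order Taylor expansion at $x=0$ yields
\[
  k^{1/2}\tilde x_n = k^{1/2}\{1 - (1+\gamma\tilde x_n)^{-1/\gamma}\} + o_P(1) = \tilde\mu_n(g_{0,0}) + o_P(1).
\]
For the second coordinate, write $\xi_n = I_n/J_n$ with $I_n = \int v(u-\tilde x_n)\,\tilde\nu_n$ and $J_n = \int(u-\tilde x_n)\,\tilde\nu_n$, both integrals over $\{u>\tilde x_n\}$. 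Proposition~\ref{prop:moments-tildenun} gives $J_n \to_P 1/(1-\gamma)$. Decomposing $I_n = \int uv\,\tilde\nu_n - \tilde x_n\int v\,\tilde\nu_n$ and invoking the hypothesis $\mu = 0$ (equivalently $\int_{u>0}uv\,\nu = 0$) to annihilate the $\nu$-mean of the first piece leads to $k^{1/2}I_n = \tilde\mu_n(g_{1,1}) + o_P(1)$, whence $k^{1/2}\xi_n = (1-\gamma)\tilde\mu_n(g_{1,1}) + o_P(1)$. For the third coordinate, expand
\[
  \hat a^2/a_n^2 = \int v^2\,\tilde\nu_n - 2\xi_n \int v\,\tilde\nu_n + \xi_n^2,
\]
in which the last two terms are $O_P(k^{-1/2})$ and $O_P(k^{-1})$ respectively by the previous step, while the standardization (\ref{eq:standardisation-variance}) yields $k^{1/2}(\int v^2\,\tilde\nu_n - 1) = \tilde\mu_n(g_{0,2}) + o_P(1)$. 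The delta method applied to the square root then produces $k^{1/2}(v_n - 1) = \tfrac12\tilde\mu_n(g_{0,2}) + o_P(1)$, and Proposition~\ref{prop:clt-fidi} delivers the joint weak convergence to $(W(g_{0,0}), (1-\gamma)W(g_{1,1}), \tfrac12 W(g_{0,2}))$, together with $\tilde\mu_n$ itself.

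The main technical obstacle is the replacement of the random threshold $\tilde x_n$ by the deterministic value $0$ in each of these integrals. Concretely, for each integrand $g \in \{1,\,v,\,uv,\,v^2\}$ one needs the stochastic equicontinuity estimate
\[
  k^{1/2}\!\!\int_{0\wedge\tilde x_n\,<\,u\,<\,0\vee \tilde x_n}\! g(u,v)\,(\tilde\nu_n-\nu)(\mathrm d u,\mathrm d v) = o_P(1),
\]
which follows from the functional weak convergence embedded in Proposition~\ref{prop:clt-fidi} (the limit $W$ is continuous at $u=0$ since $\nu$ is assumed continuous) together with $\tilde x_n\to_P 0$. The corresponding deterministic strip integrals $\int_{\cdots} g\,\mathrm d\nu$ are handled by a first-order Taylor expansion at $u=0$, with $\mu=0$ used to cancel the leading term in the case $g = uv$.
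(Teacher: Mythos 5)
Your proposal is correct and follows essentially the same route as the paper: each of the three statistics is reduced to an integral of $g_{0,0}$, $g_{1,1}$ or $g_{0,2}$ against $\tilde\mu_n=k^{1/2}(\tilde\nu_n-\nu)$ plus an $o_P(1)$ remainder, with $\mu=0$ annihilating the centering of the numerator of $\xi_n$, the normalization~(\ref{eq:standardisation-variance}) and the delta method handling $\hat a$, and Proposition~\ref{prop:clt-fidi} supplying the joint limit. The only cosmetic difference is the first coordinate, where the paper applies Vervaat's lemma to the tail empirical process $\mathbb{G}_n$ whereas you invert the identity $\tilde\nu_n((\tilde x_n,\infty]\times[-\infty,\infty])=1$ by a direct Taylor expansion; the two arguments are equivalent, and your explicit stochastic-equicontinuity step for replacing the random threshold $\tilde x_n$ by $0$ is exactly what the paper compresses into ``it is easily seen that''.
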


Proposition~\ref{prop:clt-fidi} and Corollary~\ref{coro:tilde}
straightforwardly yield a functional central limit theorem for the
estimator $\hat \Psi$ of $\Psi$ defined in~(\ref{eq:def-hatpsi}).
Recall that $F(x,y) = \nu([0,x] \times (-\infty,y])$. 
\begin{theo} \label{theo:fclt-empirique} If
  Assumption~\ref{hypo:second-ordre-modifie} holds with $p^\dag\geq2$
  and $q^\dag\geq 4$, if $\mu=0$, if $F$ (and hence $\Psi$) is
  differentiable and if the intermediate sequence $k$
  satisfies~(\ref{eq:condition-second-ordre-k}), then $k^{1/2} (\hat
  \Psi - \Psi)$ converges in $\mathcal{D}((-\infty,+\infty))$ to the
  process $M$ defined by
  \begin{gather} \label{eq:process-M}
    M(y) = W(0,y) - \frac{\partial F}{\partial x} (0,y) W(g_{0,0}) +
    \Psi'(y) \{(1-\gamma) W(g_{1,1}) + \frac12 W(g_{0,2}) y \} \; .
  \end{gather}
\end{theo}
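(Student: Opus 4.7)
The plan is to rewrite the centered estimator in terms of the random measure $\tilde\nu_n$ and separate the fluctuation of $\tilde\nu_n$ from that of the random centering constants. With the notation of~(\ref{eq:def-xin}),
\begin{align*}
\hat\Psi(y) = \tilde\nu_n\bigl([\tilde x_n,\infty]\times(-\infty,\xi_n + v_n y]\bigr) \; , \qquad \Psi(y) = \nu\bigl([0,\infty)\times(-\infty,y]\bigr) \; ,
\end{align*}
and, using the continuity of $F$, $\nu([\tilde x_n,\infty]\times(-\infty,z]) = \Psi(z) - F(\tilde x_n,z)$ with $F(0,z)=0$. Adding and subtracting $\nu([\tilde x_n,\infty]\times(-\infty,\xi_n+v_n y])$ then yields the decomposition
\begin{align*}
k^{1/2}(\hat\Psi(y) - \Psi(y)) = W_n(\tilde x_n,\xi_n+v_n y) + R_n(y) \; ,
\end{align*}
where $W_n$ is the process defined before Proposition~\ref{prop:clt-fidi} and
\begin{align*}
R_n(y) = k^{1/2}\bigl\{\Psi(\xi_n+v_n y) - \Psi(y) - F(\tilde x_n,\xi_n+v_n y)\bigr\} \; .
\end{align*}

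For the stochastic term I would invoke Proposition~\ref{prop:clt-fidi}, which gives $W_n \Rightarrow W$ in $\mathcal D$ with $W$ having continuous sample paths (its covariance~(\ref{eq:cov-W}) is continuous under Assumption~\ref{hypo:exces}). Corollary~\ref{coro:tilde} further provides the joint weak convergence of $W_n$ and $(k^{1/2}\tilde x_n, k^{1/2}\xi_n, k^{1/2}(v_n-1))$ to the Gaussian vector $(W, W(g_{0,0}), (1-\gamma)W(g_{1,1}), \tfrac12 W(g_{0,2}))$. Since $(\tilde x_n,\xi_n,v_n)\to_P(0,0,1)$, a Skorohod representation combined with continuity of $W$ then yields $W_n(\tilde x_n,\xi_n+v_n y) \to W(0,y)$ jointly with the scalars and locally uniformly in $y$. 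For the bias term, a first-order Taylor expansion of $\Psi$ around $y$ and of $F(\cdot,\cdot)$ around $(0,y)$, using differentiability of $F$, produces
\begin{align*}
R_n(y) = \Psi'(y)\bigl\{k^{1/2}\xi_n + k^{1/2}(v_n-1)y\bigr\} - \frac{\partial F}{\partial x}(0,y)\,k^{1/2}\tilde x_n + o_P(1) \; ,
\end{align*}
uniformly on compact sets of $y$, the remainder being $o_P(1)$ since the scaled increments $k^{1/2}\tilde x_n$, $k^{1/2}\xi_n$, $k^{1/2}(v_n-1)$ are $O_P(1)$ while their unscaled versions are $o_P(1)$. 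Adding the two contributions reproduces $M(y)$ as in~(\ref{eq:process-M}), and the joint weak convergence together with continuity of the limit upgrades the pointwise statement to functional convergence on $\mathcal D((-\infty,\infty))$.

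The main technical obstacle will be twofold. First, controlling the Taylor remainder uniformly in $y\in\mathbb R$ requires some regularity of $\partial F/\partial x(0,\cdot)$ and of $\Psi'$ and rests on Assumption~\ref{hypo:second-ordre-modifie}, which quantifies the rate at which $\nu_n$ approximates $\nu$ on weighted test functions. Second, transferring the joint weak convergence of $W_n$ and $(\tilde x_n,\xi_n,v_n)$ into the random-argument statement $W_n(\tilde x_n,\xi_n+v_n y)\to W(0,y)$ will rely crucially on the almost-sure continuity of the sample paths of $W$, which itself follows from the continuity of its covariance~(\ref{eq:cov-W}) via a standard modulus-of-continuity argument for Gaussian processes; once this is secured, the continuous-mapping theorem delivers the announced functional limit.
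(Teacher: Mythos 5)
Your proposal is correct and follows essentially the same route as the paper: the identical decomposition of $k^{1/2}(\hat\Psi(y)-\Psi(y))$ into the fluctuation term $\tilde\mu_n([\tilde x_n,\infty)\times(-\infty,\xi_n+v_ny])$ handled by Proposition~\ref{prop:clt-fidi}, plus a deterministic-in-$\nu$ remainder handled by Corollary~\ref{coro:tilde} and the delta method (which you spell out as a first-order Taylor expansion). The additional details you supply on substituting the random arguments into $W_n$ and on uniformity of the expansion are exactly what the paper's terse invocation of these tools leaves implicit.
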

We prove Theorem~\ref{theo:fclt-empirique} here in order to explain
the last two terms in the right hand side of~(\ref{eq:process-M}).
\begin{proof}[Proof of Theorem~\ref{theo:fclt-empirique}]
  Recall the definitions of $\tilde x_n$, $v_n$ and $\xi_n$
  in~(\ref{eq:def-xin}).  Then
\begin{align}
  k^{1/2} \{ \hat \Psi(y) - \Psi(y)\} & = k^{1/2} \{
  \tilde\nu_n([\tilde x_n,\infty) \times (-\infty,\xi_n + v_n y]) -
  \Psi(y) \} \nonumber  \\
  & = \tilde \mu_n([\tilde x_n,\infty)\times(-\infty, \xi_n + v_n y]) 
  \label{eq:terme-tildemun} \\
  & + k^{1/2} \{\nu( [\tilde x_n,\infty) \times (-\infty,\xi_n + v_n
  y]) - \Psi(y) \} \; . \label{eq:termes-extra}
 \end{align}
 By Proposition~\ref{prop:clt-fidi}, the term
 in~(\ref{eq:terme-tildemun}) converges weakly to $W(0,y)$.  By
 Corollary~\ref{coro:tilde} and the delta method, the term
 in~(\ref{eq:termes-extra}) converges weakly to
 $$
 - \frac{\partial F}{\partial x} (0,y) W(g_{0,0}) + \Psi'(y)
 \{(1-\gamma) W(g_{1,1}) + \frac12 W(g_{0,2}) y \} \; .
$$
\end{proof}

\section{Case of a product measure}
\label{sec:product} 
As explained in the introduction, to proceed further, we restrict the class of
models that we consider by assuming that the limiting measre $\nu$ has product
form and that the conditioning variable is in the domain of attraction of the
Gumbel law.  For examples of bivariate distributions that satisfy this
assumption see e.g.  \citet{fougeres:soulier:2010}.

\begin{hypo}   \label{hypo:varrap-produit} 
  The function $\psi$ is an auxiliary function satisfying
  $\lim_{x\to\infty}\psi(x)/x=0$, there exists $\rho\in{\mathbb R}$ such
  that $m(x) = \rho x$ and the measure $\nu$ is of the form
  \begin{align} \label{eq:nu-prod}
    \nu([x,\infty]\times(-\infty,y]) = \mathrm{e}^{-x} \Psi(y) \; ,
  \end{align}
  where $\Psi$ is a distribution function on ${\mathbb R}$.
\end{hypo}

A rank test of the assumption that $\nu$ is a product measure has been proposed
by \cite{das:resnick:2009}. The condition $\lim_{x\to\infty}\psi(x)/x=0$
implies that the extreme value index of $X$ is 0
(cf. \citet[Lemma~1.2]{resnick:1987}). Testing this assumption can be done
using standard likelihood ratio procedures; see e.g. \cite{hosking:1984}. The
assumption $m(x)=\rho x$ is satisfied by most known examples. Cf.
\cite{fougeres:soulier:2010} for a review of models satisfying these
assumptions.

We now recall the necessary and sufficient condition for $\nu$ to be a product
measure proved by \citet[Proposition~2]{heffernan:resnick:2007}.

\begin{lem} \label{lem:cns-product}
  The measure $\nu$ is a product measure if and only if $a \circ b$ is
  slowly varying at infinity and
  \begin{gather} \label{eq:nul}
    \lim_{t \to \infty} \frac{ b (tx) -  b (t)}{a\circ
      b(t)} = 0 \; .
  \end{gather}
\end{lem}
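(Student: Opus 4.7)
The approach I plan is to extract a functional equation satisfied by $\nu$ by evaluating Assumption~\ref{hypo:exces} at two different intermediate rates and matching the two limits, and then to read off the characterisation of product form from this equation.

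Concretely, for any fixed $t>0$, I apply Assumption~\ref{hypo:exces} with $n$ replaced by $\lfloor n/t\rfloor$, which normalises by $b(n/t)$, $\psi\circ b(n/t)$, $m\circ b(n/t)$, $a\circ b(n/t)$. Since $\gamma=0$ (Gumbel normalisation gives $(b(n/t)-b(n))/\psi\circ b(n)\to-\log t$ and $\psi\circ b(n/t)/\psi\circ b(n)\to 1$) and since by Lemma~\ref{prop:h-resnick} $a\circ b$ is regularly varying with some index $\zeta$ and $(m\circ b(n/t)-m\circ b(n))/a\circ b(n)\to J_\zeta(1/t)$, the event $\{X>b(n/t)+\psi\circ b(n/t)\,x,\ Y\le m\circ b(n/t)+a\circ b(n/t)\,y\}$ can be rewritten in the $b(n)$-normalisation. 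Matching the resulting limits yields, for every $x,y\in\mathbb{R}$ and $t>0$,
\begin{equation}\label{eq:funceqplan}
\nu\bigl([x-\log t,\infty)\times(-\infty,J_\zeta(1/t)+t^{-\zeta}y]\bigr)=t\,\nu\bigl([x,\infty)\times(-\infty,y]\bigr).
\end{equation}

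For the \emph{necessary} direction, assuming $\nu$ has the product form $\nu([x,\infty)\times(-\infty,y])=e^{-x}\Psi(y)$, substitution in~\eqref{eq:funceqplan} reduces it to $\Psi(J_\zeta(1/t)+t^{-\zeta}y)=\Psi(y)$ for all $t,y$. Since $\Psi$ is non-degenerate, the affine transformation of $y$ must be the identity for every $t$, forcing $\zeta=0$ (so $a\circ b$ is slowly varying) and the constant $c$ in $J_0(x)=c\log x$ to vanish. Under Assumption~\ref{hypo:varrap-produit} we have $m(x)=\rho x$, hence $(m\circ b(tx)-m\circ b(t))/a\circ b(t)=\rho\,(b(tx)-b(t))/a\circ b(t)$, so $c=0$ is exactly~\eqref{eq:nul}. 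For the \emph{converse}, assuming $\zeta=0$ and~\eqref{eq:nul}, equation~\eqref{eq:funceqplan} collapses to $\nu([x-\log t,\infty)\times(-\infty,y])=t\,\nu([x,\infty)\times(-\infty,y])$; setting $x=0$ and letting $s=-\log t$ range over $\mathbb{R}$ yields $\nu([s,\infty)\times(-\infty,y])=e^{-s}\Psi(y)$ with $\Psi(y):=\nu([0,\infty)\times(-\infty,y])$, which is the desired product form.

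The main technical obstacle is the rigorous derivation of~\eqref{eq:funceqplan}: one must compose the vague convergence at rate $n$ with the deterministic convergence of the change-of-variable coefficients at fixed $t$, and then pass to the limit inside the measure. This is handled by combining the local uniform convergence in Lemma~\ref{prop:h-resnick}, Potter-type bounds for the regularly varying functions $\psi\circ b$ and $a\circ b$, and the continuity of the map $(x,y)\mapsto\nu([x,\infty)\times(-\infty,y])$ guaranteed by Assumption~\ref{hypo:exces}.
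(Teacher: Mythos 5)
The paper does not actually prove this lemma: it is stated as a recall of Proposition~2 of \cite{heffernan:resnick:2007}, so there is no in-text argument to compare yours against. Your reconstruction --- deriving a scaling functional equation for $\nu$ by evaluating Assumption~\ref{hypo:exces} along the two rates $n$ and $n/t$, matching the limits via Lemma~\ref{prop:h-resnick} and the local uniform convergence guaranteed by the continuity hypothesis, and then extracting $\zeta=0$ and $c=0$ from the non-degeneracy of $\Psi$ by a convergence-of-types argument --- is precisely the mechanism of the cited proof, and both directions are sound insofar as they characterize product form by ``$a\circ b$ slowly varying and $\lim_{t\to\infty}(m\circ b(tx)-m\circ b(t))/a\circ b(t)=0$''.

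Two caveats. First, you work throughout with $\gamma=0$, which is not a hypothesis of the lemma as printed; this is harmless in context (the lemma is only invoked under Assumption~\ref{hypo:varrap-produit}, which forces $\gamma=0$), but the general case requires replacing $x-\log t$ and $e^{-x}$ by their $\bar P_\gamma$ analogues. Second, and more substantively, the step where you translate the condition $c=0$ (a condition on $m\circ b$) into the printed condition~(\ref{eq:nul}) (a condition on $b$) imports Assumption~\ref{hypo:varrap-produit} into a lemma that does not formally assume it, and even then the ``only if'' direction needs $\rho\neq 0$: if $\rho=0$ --- take $X$ exponential and $Y$ independent of $X$, so $b(t)=\log t$, $a\equiv 1$ --- then $\nu$ is a product measure and $a\circ b$ is slowly varying, yet $(b(tx)-b(t))/a\circ b(t)=\log x\neq 0$, so~(\ref{eq:nul}) fails. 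This is really a slip in the paper's transcription of Heffernan--Resnick (whose condition concerns $m\circ b$, not $b$), but your write-up silently inherits it; you should either prove the $m\circ b$ version or state explicitly that you assume $m(x)=\rho x$ with $\rho\neq0$.
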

The main consequence of Assumption~\ref{hypo:varrap-produit} and of
Lemma~\ref{lem:cns-product} is that $\psi(x) = o(a(x))$ (by application of
\citet[Theorem B.2.21]{dehaan:ferreira:2006}) and this implies that given
$X>t$, $(X-t)/a(t)$ converges in probability to zero. We thus have the
following Corollary.
\begin{coro} \label{coro:random-centering} 
  If Assumptions~\ref{hypo:exces} and~\ref{hypo:varrap-produit} hold
  then, for all $x\geq0$ and $y\in(-\infty,\infty)$,
\begin{gather*}
  \lim_{t\to\infty} \mathbb{P}( X \leq t+\psi(t) x \, , \ Y - \rho X \leq a(t) y
  \mid X>t) = (1-\mathrm{e}^{-x}) \Psi(y) \; .
\end{gather*}
Define the measure $\nu_n^\ddag$ on $(-1/\gamma,+\infty) \times
[-\infty,+\infty]$ by
\begin{align} \label{eq:def-nu_n-ddag}
  \nu_n^\ddag(\cdot) = n \mathbb{P} \left( \left\{ \frac{X - b(n)}{\psi \circ
        b(n)}, \frac{Y - \rho X }{a \circ b(n)} \right\} \in
    \cdot\right) \; .
\end{align}
Then $\nu_n^\ddag$ converges vaguely on $(-\infty,+\infty] \times
[-\infty,+\infty]$ to~$\nu$.
\end{coro}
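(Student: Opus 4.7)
The plan is to exploit the identity
\begin{gather*}
  Y - \rho X = (Y - m(t)) - \rho(X-t),
\end{gather*}
which holds because $m(t)=\rho t$, together with the key fact recalled in the text just before the corollary: under Assumptions~\ref{hypo:exces} and~\ref{hypo:varrap-produit} one has $\psi(x)=o(a(x))$. Dividing the identity by $a(t)$ gives
\begin{gather*}
  \frac{Y - \rho X}{a(t)} = \frac{Y - m(t)}{a(t)} - \rho\,\frac{\psi(t)}{a(t)}\cdot\frac{X - t}{\psi(t)},
\end{gather*}
so the new centering differs from the old one by a deterministic factor $\eta_t := \rho\psi(t)/a(t)\to 0$ times $(X-t)/\psi(t)$. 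Both assertions of the corollary reduce to Slutsky-type arguments based on this.

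For the first assertion, set $U_t=(X-t)/\psi(t)$ and $V_t=(Y-m(t))/a(t)$. Assumption~\ref{hypo:exces} applied to the conditional law given $X>t$, combined with the product form~(\ref{eq:nu-prod}) from Assumption~\ref{hypo:varrap-produit}, yields that $(U_t,V_t)$ converges in distribution to $(E,Y^\star)$ with $E\sim\mathrm{Exp}(1)$, $Y^\star\sim\Psi$, and $E$ independent of $Y^\star$. Since $\eta_t\to 0$ deterministically, Slutsky's theorem gives $(U_t,\,V_t-\eta_t U_t)\to(E,Y^\star)$ in distribution, conditionally on $X>t$. The continuity of the limit cdf $(x,y)\mapsto(1-\mathrm{e}^{-x})\Psi(y)$, guaranteed by the continuity clause in Assumption~\ref{hypo:exces}, then gives convergence of the joint cdf at every $(x,y)$, which is exactly the claimed limit.

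For the vague convergence of $\nu_n^\ddag$, observe that $\nu_n^\ddag$ is the push-forward of $\nu_n$ under the affine map $T_n(x,y)=(x,\,y-\eta_n x)$ with $\eta_n=\rho\,\psi\circ b(n)/a\circ b(n)\to 0$: indeed, writing $m\circ b(n)=\rho b(n)$ and $Y-\rho b(n)=(Y-\rho X)+\rho(X-b(n))$ in the definition of $\nu_n$ makes this explicit. For any continuous $f$ with compact support $\mathrm{supp}(f)\subset[-A,\infty]\times[-\infty,\infty]$, one checks that $\mathrm{supp}(f\circ T_n)\subset[-A,\infty]\times[-\infty,\infty]$ as well. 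Decompose
\begin{gather*}
  \int f\,d\nu_n^\ddag - \int f\,d\nu
  \;=\;\int (f\circ T_n - f)\,d\nu_n \;+\;\Bigl(\int f\,d\nu_n - \int f\,d\nu\Bigr).
\end{gather*}
The second bracket tends to zero by Assumption~\ref{hypo:exces} (vague convergence of $\nu_n$). For the first, uniform continuity of $f$ on a compact enlargement of its support together with $\eta_n\to 0$ yields $\sup|f\circ T_n - f|\to 0$; dominating the indicator of that enlargement by a nonnegative compactly supported continuous function and applying vague convergence of $\nu_n$ shows that $\nu_n$ assigns uniformly bounded mass to it, so the first term also vanishes.

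The only delicate point is the last paragraph's book-keeping: one must check that the supports of $f\circ T_n$ are contained in a common compact set (so that $f$ is uniformly continuous there) and that $\nu_n$ assigns uniformly bounded mass to this set. Everything else is a direct consequence of Assumption~\ref{hypo:exces}, the product form~(\ref{eq:nu-prod}), and the relation $\psi=o(a)$ provided by Lemma~\ref{lem:cns-product}.
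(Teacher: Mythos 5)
Your treatment of the first assertion is exactly the paper's (implicit) argument: write $Y-\rho X=(Y-m(t))-\rho(X-t)$, note $\psi=o(a)$ so that $\rho(X-t)/a(t)\to_P 0$ given $X>t$, and conclude by Slutsky plus continuity of the limit. That part is fine.

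The test-function argument for the vague convergence of $\nu_n^\ddag$ has a genuine gap at the step ``uniform continuity of $f$ on a compact enlargement of its support together with $\eta_n\to0$ yields $\sup|f\circ T_n-f|\to0$.'' The common compact set containing the supports is of the form $[-A,+\infty]\times[-\infty,+\infty]$, i.e.\ it contains points with arbitrarily large (finite) first coordinate, and the shear $T_n(x,y)=(x,y-\eta_n x)$ displaces the second coordinate by $\eta_n x$, which is \emph{not} uniformly small on that set: taking $x\asymp 1/\eta_n$ sends $y-\eta_n x$ a fixed distance away from $y$ (indeed to $\mp\infty$ in the compactified topology), so $T_n$ does not converge to the identity uniformly on the support and $\sup|f\circ T_n-f|$ need not vanish (try $f(x,y)=h(x)g(y)$ with $h\equiv1$ on $[0,+\infty]$). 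The flaw is not the one you flag (the supports \emph{are} in a common compact set and the $\nu_n$-masses \emph{are} bounded); it is that uniform continuity alone does not close the argument. The standard repair is to split at a level $M_n\to\infty$ with $\eta_n M_n\to0$: on $\{x\le M_n\}$ the displacement is $o(1)$ and uniform continuity applies, while on $\{x>M_n\}$ one bounds the integral by $2\|f\|_\infty\,\nu_n([M_n,+\infty]\times[-\infty,+\infty])$, which tends to $0$ since $\nu_n([M,+\infty]\times[-\infty,+\infty])\to\mathrm e^{-M}$ for each fixed $M$. Alternatively, and closer to what the paper does, the second assertion follows directly from the first: the first limit (with $t=b(n)$ and $n\mathbb{P}(X>b(n))\to1$) gives convergence of $\nu_n^\ddag$ on the sets $[x,+\infty]\times[-\infty,y]$, which form a convergence-determining class for vague convergence on this space, so no separate test-function computation is needed.
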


\subsection{Nonparametric estimation}\label{subsec:nonparam-estim}
Under Assumption~\ref{hypo:varrap-produit}, we can define
new estimators of $\rho$, $a$ and the marginal distribution $\Psi$
as follows:
\begin{gather}
  \hat \rho = \frac{\sum_{i=1}^k Y_{[n:n-i+1]} \{X_{(n:n-i+1)} -
    X_{(n:n-k)}\}} { \sum_{i=1}^k X_{(n:n-i+1)} \{X_{(n:n-i+1)} -
    X_{(n:n-k)}\}} \; ,
 \label{eq:NP-rho} \\
  \check a(X_{(n:n-k)}) = \left[\frac1k \sum_{i=1}^k \{Y_{[n:n-i+1]}
    - \hat \rho X_{(n:n-i+1)} \}^2 \right]^{1/2}\; ,
  \label{eq:NP-a}\\
  \check \Psi(z) = \frac1k \sum_{i=1}^k \mathbf{1}_{\{Y_{[n:n-i+1]} \leq
    \hat \rho X_{(n:n-i+1)} + \check a(X_{(n:n-k)}) z\}} \; .
    \label{eq:NP-Psi}
\end{gather}
\begin{theo} \label{theo:estim-produit}
  If Assumptions~\ref{hypo:exces},~\ref{hypo:moments} (with $p^*=1$
  and $q^*=2$) and~\ref{hypo:varrap-produit} hold and if $\mu=0$, then
  for any intermediate sequence $k$, $b(n/k)(\hat \rho-\rho)/a\circ
  b(n/k)$ converges weakly to 0, $\check a(X_{(n:n-k)})/a\circ b(n/k)$
  converges weakly to 1 and $\check \Psi$ is a consistent estimator of
  $\Psi$. If moreover $a(x)=o(x)$ then $\hat\rho$ converges weakly to
  $\rho$.

\end{theo}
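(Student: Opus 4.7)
The plan is to express each estimator as an integral against the random measure $\tilde\nu_n$ from Section \ref{sec:assump-prelim}, so that the consistency statements reduce to applications of Propositions~\ref{prop:conv-tildenun} and~\ref{prop:moments-tildenun}. Set $b_k=b(n/k)$, $\psi_k=\psi\circ b(n/k)$, $a_k=a\circ b(n/k)$, $\tilde x_n=(X_{(n:n-k)}-b_k)/\psi_k$, $\tilde X_i=(X_i-b_k)/\psi_k$, and $\tilde Y_i=(Y_i-\rho b_k)/a_k$ (using $m(b_k)=\rho b_k$). By Proposition~\ref{prop:moments-tildenun} $\tilde x_n\to_P 0$, and the discussion following Lemma~\ref{lem:cns-product} combined with Assumption~\ref{hypo:varrap-produit} gives $\psi_k/a_k\to 0$ and $\psi_k/b_k\to 0$. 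The algebraic identity $Y_i-\rho X_i=a_k\tilde Y_i-\rho\psi_k\tilde X_i$ drives the whole argument.

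For the first claim, a direct computation yields
\begin{align*}
\hat\rho-\rho=\frac{a_k B_n-\rho\psi_k C_n}{b_k A_n+\psi_k C_n},
\end{align*}
where $A_n=\int (x-\tilde x_n)\mathbf{1}_{\{x\geq\tilde x_n\}}\,\tilde\nu_n$, $B_n=\int y(x-\tilde x_n)\mathbf{1}_{\{x\geq\tilde x_n\}}\,\tilde\nu_n$, $C_n=\int x(x-\tilde x_n)\mathbf{1}_{\{x\geq\tilde x_n\}}\,\tilde\nu_n$. Since $\gamma=0$ the remark after Assumption~\ref{hypo:moments} lets us take $p^*$ arbitrarily large for the marginal of $X$, while $q^*=2$ suffices for $B_n$; Proposition~\ref{prop:moments-tildenun} together with $\mu=0$ then gives $A_n\to_P 1$, $B_n\to_P 0$, $C_n\to_P 2$. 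Multiplying numerator and denominator by $1/(b_k a_k)$ and using $\psi_k/a_k\to 0$, $\psi_k/b_k\to 0$ yields $b_k(\hat\rho-\rho)/a_k\to_P 0$. The final claim is then immediate: if $a(x)=o(x)$ then $a_k/b_k\to 0$ and $\hat\rho-\rho=o_P(a_k/b_k)=o_P(1)$.

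For $\check a$, substitute $Y_i-\hat\rho X_i=(\rho-\hat\rho)b_k+a_k\tilde Y_i-\hat\rho\psi_k\tilde X_i$ and divide by $a_k^2$ to obtain
\begin{align*}
\check a^2(X_{(n:n-k)})/a_k^2=\int(\eta_n+y-\delta_n x)^2\mathbf{1}_{\{x\geq\tilde x_n\}}\,\tilde\nu_n(\mathrm dx,\mathrm dy),
\end{align*}
with $\eta_n=(\rho-\hat\rho)b_k/a_k=o_P(1)$ by the first claim, and $\delta_n=\hat\rho\psi_k/a_k=\rho\psi_k/a_k+o_P(\psi_k/b_k)=o_P(1)$. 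Expanding the square, Proposition~\ref{prop:moments-tildenun} applied to $y^2$, $xy$ and $x^2$ gives the leading term $\to_P\int y^2\,\nu=1$ by the normalization~\eqref{eq:standardisation-variance}, the $xy$ cross-term is killed by $\mu=0$, and the $x^2$ term is killed by $\delta_n^2\to_P 0$. Hence $\check a/a_k\to_P 1$.

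For $\check\Psi(z)$ the indicator in~\eqref{eq:NP-Psi} rewrites as $\tilde Y_i\leq c_n+d_n\tilde X_i+e_n z$ with $c_n=(\hat\rho-\rho)b_k/a_k=o_P(1)$, $d_n=\hat\rho\psi_k/a_k=o_P(1)$, and $e_n=\check a/a_k\to_P 1$, so that
\begin{align*}
\check\Psi(z)=\tilde\nu_n\bigl(\{(x,y):x\geq\tilde x_n,\ y\leq c_n+d_n x+e_n z\}\bigr).
\end{align*}
By Proposition~\ref{prop:conv-tildenun}, a Slutsky-type sandwiching that brackets $c_n+d_n x+e_n z$ between $z\pm\epsilon$ on compacts $\{x\leq K\}$ and controls the tail via $\nu([K,\infty]\times\mathbb R)=\mathrm e^{-K}$ shows that the right-hand side converges in probability to $\nu([0,\infty]\times(-\infty,z])=\Psi(z)$, using continuity of $\Psi$ from Assumption~\ref{hypo:exces}. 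The main obstacle is this last step: the thresholds are random and depend on the individual $\tilde X_i$, so one must neutralize the $d_n\tilde X_i$ perturbation uniformly in the upper $x$-tail, and this is precisely where the consequence $\psi_k/a_k\to 0$ of Assumption~\ref{hypo:varrap-produit} is essential.
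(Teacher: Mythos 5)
Your proof is correct, but it follows a genuinely different route from the paper's. The paper's (omitted) argument replaces the measure $\tilde\nu_n$ by the randomly centered empirical measure $\check\nu_n$ of~(\ref{eq:def-checknu}), whose second coordinate is $(Y_i-\rho X_i)/a\circ b(n/k)$; its weak convergence to $\nu$ is supplied by Corollary~\ref{coro:random-centering}, and the proofs of Propositions~\ref{prop:estim-m}, \ref{prop:estim-a} and Theorem~\ref{theo:estim-F} are then repeated verbatim with $\check\nu_n$ in place of $\tilde\nu_n$. You instead keep the deterministically centered measure $\tilde\nu_n$ throughout and absorb the random centering algebraically via $Y_i-\rho X_i=a_k\tilde Y_i-\rho\psi_k\tilde X_i$, reducing everything to Propositions~\ref{prop:conv-tildenun} and~\ref{prop:moments-tildenun} plus the two deterministic facts $\psi_k/a_k\to0$ and $\psi_k/b_k\to0$ from Assumption~\ref{hypo:varrap-produit}. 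Your route buys self-containedness: the paper's shortcut tacitly needs a moment-convergence analogue of Proposition~\ref{prop:moments-tildenun} for $\check\nu_n$, whereas Assumption~\ref{hypo:moments} is stated only for $\nu_n$; your decomposition makes explicit exactly why the discrepancy between the two centerings is negligible, and your observation that the $x^2$ terms are covered because $\gamma=0$ allows arbitrary $p^*$ for functions of $x$ alone is exactly the point that makes the hypothesis $p^*=1$ sufficient. The price is the more delicate sandwiching step for $\check\Psi$, where the threshold $c_n+d_n\tilde X_i+e_n z$ varies with $i$; your truncation at $x\le K$ combined with the bound $\nu([K,\infty]\times[-\infty,\infty])=\mathrm e^{-K}$ on the tail mass handles this correctly. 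Both approaches are sound; yours is the more detailed and arguably the more rigorous of the two.
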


The proof of Theorem~\ref{theo:estim-produit} is along the lines of
the proof of Propositions~\ref{prop:estim-m},~\ref{prop:estim-a} and
Theorem~\ref{theo:estim-F}.  The only difference is that instead of
the random measure $\tilde\nu_n$ defined in~(\ref{eq:def-tilde-nu_n})
we use the measure $\check \nu_n$, defined by
\begin{gather} \label{eq:def-checknu}
  \check\nu_n = \frac1k \sum_{i=1}^n \delta_{(\{X_i-b(n/k)\}/\psi\circ
    b(n/k), \{Y_i-\rho X_i\}/a\circ b(n/k))} \; , 
\end{gather}
which converges weakly to the measure $\nu$ for any intermediate
sequence $k$, as a consequence of
Corollary~\ref{coro:random-centering} and
\citet[Proposition~5.3]{resnick:1986}.  The details are omitted.

In order to prove central limit theorems, we now introduce a second
order assumption which is a modification of
Assumption~\ref{hypo:second-ordre-modifie} that accounts for the
random centering. Recall the measure $\nu_n^\ddag$ defined
in~(\ref{eq:def-nu_n-ddag}).

\begin{hypo} \label{hypo:second-ordre-produit-modifie} There exist
  positive real numbers $p^\ddag$ and $q^\ddag$, a function $\tilde c$
  such that $\lim_{t\to\infty} \tilde c(t) = 0$ and a Radon measure
  $\mu^\ddag$ on $(-1/\gamma,\infty)\times(-\infty,\infty)$ such that
  for any $\epsilon\in(0,1/\gamma)$, and any measurable function $h$
  such that $|h(x,y)|\leq (|x|\vee1)^{p^\ddag} (|y|\vee1)^{q^\ddag}$,
  it holds that
  \begin{align*}
    \int_{-\epsilon}^\infty \int_{-\infty}^\infty |h(x,y)|
    \mu^\ddag(\mathrm d x,\mathrm d y) < \infty \; ,
  \end{align*}
and
  \begin{multline}
    \left|\int_{-\epsilon}^\infty \int_{-\infty}^\infty h(x,y)
      \nu_n^\ddag(\mathrm d x, \mathrm d y) - \int_{-\epsilon}^\infty
      \int_{-\infty}^\infty h(x,y) \nu(\mathrm d x, \mathrm d y) \right| \\
    \leq \tilde c \circ b(n) \int_{-\epsilon}^\infty \int_{-\infty}^\infty
    |h(x,y)| \mu^\ddag(\mathrm d x,\mathrm d y) \; . \label{eq:second-ordre-produit-modifie}
  \end{multline}
\end{hypo}
The difference with Assumption~\ref{hypo:second-ordre-modifie} is the
presence of measure $\nu_n^\ddag$ instead of $\nu_n$. 
It can be shown that Assumptions~\ref{hypo:second-ordre-modifie}
and~\ref{hypo:varrap-produit} with a smoothness assumption on $\Psi$
imply Assumption~\ref{hypo:second-ordre-produit-modifie}, but with the
same rate function $c$ as in
Assumption~\ref{hypo:second-ordre-modifie}, whereas in some cases
Assumption~\ref{hypo:second-ordre-produit-modifie} can be proved
directly with a function $\tilde c$ which goes to zero at infinity
faster than $c$. The following results could be stated under
Assumption~\ref{hypo:second-ordre-modifie}, but the interest of
Assumption~\ref{hypo:second-ordre-produit-modifie} is to take into
account the possibility of faster rates of convergence of the
estimators than those allowed by
Assumption~\ref{hypo:second-ordre-modifie}.

As an example, consider the case of a bivariate Gaussian vector
with standard marginals and correlation~$\rho$.
\cite{abdous:fougeres:ghoudi:2005} have shown that $\lim_{x\to\infty}
\mathbb{P}(Y\leq \rho x + \sqrt{1-\rho^2}y \mid X>x) = \Phi(y)$ (where $\Phi$
is the distribution function of the standard Gaussian law), and a rate
of convergence of order $x^{-1}$ has been proved in
\cite{abdous:fougeres:ghoudi:soulier:2008}. But of course, since
$(Y-\rho X)/\sqrt{1-\rho^2}$ is standard Gaussian and independent of
$X$, for all $x$ it holds that $\mathbb{P}(Y\leq \rho X + \sqrt{1-\rho^2}y
\mid X>x) = \Phi(y)$. For general elliptical bivariate random vectors,
it is also proved in \cite{abdous:fougeres:ghoudi:soulier:2008} that
the rate of convergence with random centering can be the square of the
rate with deterministic centering.
Assumption~\ref{hypo:second-ordre-produit-modifie} can also be checked for the
generalized elliptical distributions studied
in~\cite{fougeres:soulier:2010}.

We can now state central limit theorems for $\check a(X_{(n:n-k)})$,
$\hat \rho$ and $\hat \Psi$ which parallels Corollary~\ref{coro:tilde}
and Theorem~\ref{theo:fclt-empirique}. The proof is also omitted.

\begin{theo} \label{theo:fclt-produit} If
  Assumptions~\ref{hypo:exces},~\ref{hypo:varrap-produit}
  and~\ref{hypo:second-ordre-produit-modifie} hold with $p^\ddag\geq2$
  and $q^\ddag\geq4$, if $\Psi$ is  differentiable and if
  $\mu=0$ and if the intermediate sequence $k$ is chosen such that
  \begin{gather} \label{eq:condition-second-ordre-k-modifie}
    \lim_{n\to\infty} k^{1/2} \tilde c \circ b(n/k) = 0 \; ,
  \end{gather}
  then $k^{1/2}\{\check \Psi - \Psi\}$ converges weakly in
  $\mathcal{D}((-\infty,\infty))$ to the process $M$ defined
  in~(\ref{eq:process-M}) and
  $$
  k^{1/2} \left( \frac{ b(n/k) (\hat\rho - \rho) }{ a \circ b(n/k)}
    \; , \frac{\check a(X_{(n:n-k)})}{a \circ b(n/k)} - 1 \right)
  $$
  converges jointly with $k^{1/2}(\check\Psi-\Psi)$ to the Gaussian
  vector $(W(g_{1,1}),W(g_{0,2}))$.
\end{theo}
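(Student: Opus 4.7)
The plan is to mirror the proofs of Proposition~\ref{prop:clt-fidi}, Corollary~\ref{coro:tilde} and Theorem~\ref{theo:fclt-empirique}, replacing the random measure $\tilde\nu_n$ by $\check\nu_n$ from~(\ref{eq:def-checknu}) and Assumption~\ref{hypo:second-ordre-modifie} by Assumption~\ref{hypo:second-ordre-produit-modifie}. The first step is to establish an analog of Proposition~\ref{prop:clt-fidi} for the random measure $\check\mu_n := k^{1/2}(\check\nu_n - \nu)$: under Assumption~\ref{hypo:second-ordre-produit-modifie} together with~(\ref{eq:condition-second-ordre-k-modifie}), $\check\mu_n$ converges functionally to the same Gaussian random measure $W$ with control measure $\nu$, because the proof of Proposition~\ref{prop:clt-fidi} uses the second order bound only to control the bias, and~(\ref{eq:second-ordre-produit-modifie}) does this for $\check\nu_n$ exactly as~(\ref{eq:second-ordre-modifie}) did for $\nu_n$.

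Next I would express the three estimators as functionals of $\check\nu_n$ via the change of variables $\tilde X_i=(X_i-b(n/k))/\psi\circ b(n/k)$ and $\check Y_i=(Y_i-\rho X_i)/a\circ b(n/k)$. Assumption~\ref{hypo:varrap-produit} gives $\psi(t)/t\to 0$ and, via Lemma~\ref{lem:cns-product}, $\psi(t)/a(t)\to 0$; the first of these kills the term of order $\psi(b(n/k))^2$ in the denominator of $\hat\rho-\rho$ relative to the one of order $b(n/k)\psi(b(n/k))$, yielding
\begin{gather*}
  \frac{b(n/k)\,(\hat\rho-\rho)}{a\circ b(n/k)}
  = \frac{\int (x-\tilde x_n)\,y\,\mathbf{1}_{\{x>\tilde x_n\}}\,\check\nu_n(\mathrm d x,\mathrm d y)}{\int (x-\tilde x_n)\,\mathbf{1}_{\{x>\tilde x_n\}}\,\check\nu_n(\mathrm d x,\mathrm d y)}\bigl(1+o_P(1)\bigr),
\end{gather*}
with $\tilde x_n=O_P(k^{-1/2})$ and the denominator converging to $1$ in probability. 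An analogous expansion for the variance estimator gives $(\check a(X_{(n:n-k)})/a\circ b(n/k))^2 = \int y^2\,\mathbf{1}_{\{x>\tilde x_n\}}\,\check\nu_n+o_P(k^{-1/2})$, since the cross and quadratic contributions of $(\hat\rho-\rho)$ are of order $O_P(k^{-1})$.

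The third step is a Taylor expansion around $\tilde x_n=0$, coupled with the stochastic equicontinuity of $\check\mu_n$ at the point~$0$. The product form yields the crucial identities $\int xy\,\mathbf{1}_{\{x>0\}}\,\mathrm d\nu=\mu=0$ and $\int y\,\mathbf{1}_{\{x>0\}}\,\mathrm d\nu=0$, which annihilate the deterministic biases caused by replacing the random endpoint $\tilde x_n$ by $0$ in the numerator of the expression above. The leading stochastic term is then $\check\mu_n(g_{1,1})\to W(g_{1,1})$. For $\check a/a\circ b(n/k)-1$ the delta method applied to the square root, together with the expansion $\int y^2\mathbf{1}_{\{x>\tilde x_n\}}\,\mathrm d\nu=\mathrm{e}^{-\tilde x_n}$ and $k^{1/2}(\mathrm{e}^{-\tilde x_n}-1)\to -W(g_{0,0})$, produces the claimed limit. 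Joint convergence with $\check\mu_n$ is automatic since each quantity is the evaluation of $\check\mu_n$ on an asymptotically fixed bounded functional.

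Finally, for $k^{1/2}(\check\Psi-\Psi)$ I would write, as in the proof of Theorem~\ref{theo:fclt-empirique},
\begin{gather*}
  \check\Psi(z) = \check\nu_n\bigl([\tilde x_n,\infty)\times(-\infty,\,\xi_n^{(i)}+(\check a/a\circ b(n/k))\,z]\bigr),
\end{gather*}
where $\xi_n^{(i)}=(\hat\rho-\rho)X_{(n:n-i+1)}/a\circ b(n/k)$. Using $\psi(t)/t\to 0$, one replaces $X_{(n:n-i+1)}$ by $b(n/k)$ uniformly over $i=1,\dots,k$ at a cost of $o_P(k^{-1/2})$, then expands by Taylor's theorem exploiting the differentiability of $\Psi$ and combines the result with the joint convergence obtained above and the stochastic equicontinuity of $\check\mu_n$. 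The hard part will be the control of this $i$-dependent random centering: showing that the variation of $\hat\rho\,X_{(n:n-i+1)}$ across the top $k$ order statistics is negligible at the scale $k^{-1/2}$ is exactly where Assumption~\ref{hypo:varrap-produit} is indispensable, since a non-Gumbel conditioning variable would have $\psi(x)/x$ bounded away from $0$ and the approximation would fail.
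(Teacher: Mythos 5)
Your overall strategy coincides with the paper's: the proof is omitted there precisely because it is meant to parallel Corollary~\ref{coro:tilde} and Theorem~\ref{theo:fclt-empirique}, with $\check\nu_n$ in place of $\tilde\nu_n$ and Assumption~\ref{hypo:second-ordre-produit-modifie} supplying the bias control needed for a version of Proposition~\ref{prop:clt-fidi} for $\check\mu_n=k^{1/2}(\check\nu_n-\nu)$. Your treatment of the first coordinate ($\hat\rho$) is correct, and your identification of the $i$-dependent random centering in $\check\Psi$ as the delicate step is apt; note only that $\sup_{i\le k}|X_{(n:n-i+1)}/b(n/k)-1|$ need not be $o_P(1)$ under Assumption~\ref{hypo:varrap-produit} alone (for Weibull-type tails it is merely $O_P(1)$ for $k=n^\theta$), so the ``uniform replacement at cost $o_P(k^{-1/2})$'' must instead be argued through the average $k^{-1}\sum_{i\le k}(X_{(n:n-i+1)}-b(n/k))/b(n/k)=O_P(\psi\circ b(n/k)/b(n/k))$, which is what actually enters at first order.

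The genuine gap is in the second coordinate. Your own expansion gives
\begin{align*}
  k^{1/2}\left\{\frac{\check a^2(X_{(n:n-k)})}{a^2\circ b(n/k)}-1\right\}
  = k^{1/2}\bigl(\mathrm{e}^{-\tilde x_n}-1\bigr)+\check\mu_n(g_{0,2})+o_P(1)\;,
\end{align*}
whose limit is $-W(g_{0,0})+W(g_{0,2})$ since $k^{1/2}\tilde x_n$ converges weakly to the nondegenerate variable $W(g_{0,0})$; the delta method then yields $\tfrac12\{W(g_{0,2})-W(g_{0,0})\}$ for $k^{1/2}\{\check a(X_{(n:n-k)})/a\circ b(n/k)-1\}$, not the stated $W(g_{0,2})$. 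The phrase ``produces the claimed limit'' is therefore an assertion rather than a proof: you must either show that $k^{1/2}(\mathrm{e}^{-\tilde x_n}-1)$ is negligible (it is not) or carry the computation to its actual conclusion, which differs from the statement both by the extra $-\tfrac12 W(g_{0,0})$ and by the factor $\tfrac12$. For comparison, the paper's template (the proof of Corollary~\ref{coro:tilde}) simply discards the corresponding boundary term $\int_{\tilde x_n}^{0}\int y^2\,\mathrm{d}\tilde\nu_n$ as $o_P(k^{-1/2})$ and retains the factor $\tfrac12$; a sanity check in the exactly independent product case, where $k^{-1}\sum_{i\le k}\check Y_{[n:n-i+1]}^2$ is an average of $k$ i.i.d.\ variables with variance $\int y^4\Psi(\mathrm{d}y)-1=\mathrm{var}\bigl(W(g_{0,2})-W(g_{0,0})\bigr)$, confirms that the term you found is really present. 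So your computation is the more careful one, but as written the sketch neither reconciles it with the theorem's statement nor follows it through; the same discrepancy propagates into the coefficient of $\Psi'(y)\,y$ in the limit of $k^{1/2}(\check\Psi-\Psi)$, and it must be resolved before the proof can be considered complete.
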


\begin{rem}
  As mentioned above, if we only assume
  Assumption~\ref{hypo:second-ordre-modifie} instead of
  Assumption~\ref{hypo:second-ordre-produit-modifie}
  and~(\ref{eq:condition-second-ordre-k-modifie}) with $c$ instead of
  $\tilde c$ then the conclusion of the theorem still holds.
\end{rem}

\subsubsection*{Kolmogorov-Smirnov Test}
In the case $\gamma=0$ and when the limiting measure $\nu$ has product
form, then $\frac{\partial}{\partial x} F(0,y) = \Psi(y)$.  Define
$B(t) = W(0,\Psi^{-1}(t))$. Then $B$ is a standard Brownian motion on
$[0,1]$ and
$$
W(0,y) - \frac{\partial}{\partial x} F(0,y) W(g_{0,0}) = B\circ
\Psi(y) - \Psi(y) B(1) = \mathcal B \circ \Psi (y)
$$
where $\mathcal B$ is a standard Brownian bridge.  By the same
change of variable, $W(g_{0,2})$ can be represented as 
$$
V = \int_0^1 \{\Psi^{-1}(t)\}^2 \, \mathrm d B(t) \; .
$$
Since $\mu=0$ and $\int_{-\infty}^\infty y^2\Psi(\mathrm d y) = 1$, it is
easily seen that 
\begin{gather*}
  \mathrm{var} (W(g_{1,1})) = 2 \; , \ \ 
  \mathrm{cov}(W(g_{0,0}),W(g_{0,1})) = 0 \; ,  \\
  \mathrm{cov}(W(0,y), W(g_{1,1})) = \int_{-\infty}^y z \Psi(\mathrm d z) =
  \int_0^{\Psi(y)} \Psi^{-1}(u) \, \mathrm d u \; ,  \\
  \mathrm{cov}(W(g(1,1)), W(g_{0,2})) = \int_{-\infty}^\infty z^3 \Psi(\d
  z) = \int_0^\infty \{\Psi^{-1}(u)\}^3 \, \mathrm d u \; .
\end{gather*}
Thus, $W(g_{1,1})$ can be represented as 
\begin{gather*}
  U  = \int_0^1 \Psi^{-1}(s) \, \mathrm d  B(s) + N  \; , 
\end{gather*}
where $N$ is a standard Gaussian random variable independent of the
Brownian motion~$B$. Since all random variables involved are jointly
Gaussian, this shows that $M(y)$ has the same distribution as
\begin{align*}
  \mathcal B \circ \Psi(y) + \Psi'(y) \{ U + \frac{1}2 y V\} \; .
\end{align*}
Finally, since $\Psi$ is continuous, $\sup_{y\in{\mathbb R}} |M(y)|$
has the same distribution as
\begin{gather}\label{eq:proclim}
  Z = \sup_{t\in[0,1]} \left| \mathcal B(t) + \Psi' \circ \Psi^{-1}(t)
    \{ U + \frac{1}2 \Psi^{-1}(t) V\} \right| \; .
\end{gather}
The extra terms come from the estimation of the functions $a$ and $m$.
If they were known, the limiting distribution would be the Brownian
bridge as expected. Nevertheless, this distribution depends only on
$\Psi$, so it can be used for a goodness-of-fit test. See
Section~\ref{subsec:gof} for a numerical illustration.

\subsection{Semiparametric estimation} \label{subsec:SP-estim} 
Two problems arise in practice: the estimation of the conditional
probability $\theta(x,y) = \mathbb{P}(Y\leq y \mid X > x)$ and of the
conditional quantile $y=\theta^\leftarrow(x,p)$ for some fixed $p \in
(0,1)$ and for some extreme $x$, i.e. beyond the range of the
observations.

 If $x$ lies {\it within} the range of the observations, then
$\theta(x,y)$ can be estimated empirically by
\begin{align*}
  \hat\theta_{\mathrm{emp}}(x,y) = \frac1k \sum_{i=1}^n \mathbf{1}_{\{Y_i
    \leq y\}} \mathbf{1}_{\{X_i > x\}} \; ,
\end{align*}
for $x=X_{(n:n-k)}$.  The limit distributions that arise in
Assumption~\ref{hypo:exces} is very useful when $x$ is outside the range of the
observations, so that an empirical estimate is no longer available. In such a
situation, a semiparametric approach will be needed to extrapolate the
functions $a(x)$, $m(x)$ and $\psi(x)$ for values $x$ beyond $X_{(n:n)}$. This
requires some modeling restrictions.  We still assume that
Assumption~\ref{hypo:varrap-produit} holds and we assume moreover that there
exists $\sigma>0$ such that
\begin{gather} \label{eq:simplif}
  a(x) = \sigma\sqrt{x\psi(x)} \; . 
\end{gather}
We will also assume that the limiting distribution function $\Psi$ in
(\ref{eq:nu-prod}) is known. These assumptions hold in particular for
bivariate elliptical distribution, see
\cite{abdous:fougeres:ghoudi:soulier:2008}. There, and in many other
examples, $\Psi$ is the distribution function of the standard Gaussian
law.  See also \cite{fougeres:soulier:2010}.
Assumption~\ref{hypo:varrap-produit} and~(\ref{eq:simplif}) imply that
\begin{gather} \label{eq:approx-theta}
\lim_{x\to\infty} \theta(x,\rho x + \sigma \sqrt{x\psi(x)}z) =
\Psi(z) \, ,
\end{gather}
so that $\theta(x,y)$ can be approximated for $x$ large enough by
$$
\Psi\left(\frac{y-\rho x}{\sigma \sqrt{x\psi(x)}}\right) \; . 
$$
Thus, in order to estimate $\theta$, we need a semiparametric
estimator of $\psi$. For this purpose, we make the following
assumption on the marginal distribution of $X$.
\begin{hypo} \label{hypo:weibull}
  The distribution function $H$ of  $X$ satisfies
  \begin{gather*}
    1 - H(x) = \mathrm{e}^{-x^{\beta} \{c+O(x^{\beta\eta})\}} \; 
  \end{gather*}
  with $\beta>0$ and $\eta<0$.
\end{hypo}
Under Assumption~\ref{hypo:weibull}, an admissible auxiliary function
is given by
\begin{gather}
  \psi(x) = \frac1{c\beta}x^{1-\beta}  \; . \label{eq:psi-semiparametrique}
\end{gather}
Under~(\ref{eq:simplif}), the normalizing function $a$ is then
\begin{align*}
  a(x) = \frac\sigma{\sqrt{c\beta}}x^{1-\beta/2}  \; .
\end{align*}
Let $k$ and $k_1$ be intermediate sequences. For the sake of clarity, in the
sequel, we make explicit the dependence of the estimators with respect to $k$
or $k_1$.  Semiparametric estimation of $\beta$ has been widely investigated
recently, and pitfalls of the methods have been shown
by~\cite{asimit:deyuan:li:2010}. We consider here an estimator suggested
in~\cite{gardes:girard:2006}. Define
\begin{gather} \label{eq:hatbeta} \hat \beta_k = \frac{\sum_{i=1}^{k}
    \log\log(n/i) - \log\log(n/k)}{\sum_{i=1}^{k} \log(X_{(n:n-i+1)})
    -
    \log(X_{(n:n-k)})} \; .
\end{gather}
A semiparametric  estimator of $a$ is now defined by 
\begin{gather}
  \breve a_{k_1}(x) = \check a_{k_1} (X_{(n:n-k_1)}) \left(
    \frac{x}{X_{(n:n-k_1)}} \right)^{1-\hat \beta_k/2} \; ,
  \label{eq:def-breve-a}
\end{gather}
where $\check a_{k_1}(X_{(n:n-k_1)})$ is the nonparametric estimator
defined in~(\ref{eq:NP-a}).
\begin{prop} \label{prop:breve-a} If Assumption~\ref{hypo:weibull}
  holds, and if $k$ is an intermediate sequence such that
  \begin{gather} \label{eq:cond-k-estim-beta}
    \lim_{n\to\infty} \log(k)/\log(n) = \lim_{n\to\infty} k
    \log^{2\eta}(n) = 0 \; ,
  \end{gather} 
  then $k^{1/2}(\hat \beta_k-\beta)$ converges weakly to the centered Gaussian
  distribution with variance $\beta^{-2}$.  Suppose moreover that
  Assumptions~\ref{hypo:exces},~\ref{hypo:varrap-produit}
  and~\ref{hypo:second-ordre-produit-modifie} hold with $p^\ddag=2$ and
  $q^\ddag=4$ and that $\mu=0$ and~(\ref{eq:simplif}) holds. Let $(x_n)$ be a
  sequence and $k_1$ be an intermediate sequence such that
  \begin{gather}
    \lim_{n\to\infty } k_1^{1/2} \tilde c\circ b(n/k_1) = 0
    \label{eq:rate-k_1}    \\
    \lim_{n\to\infty}    k/k_1 = 0 \; ,  \label{eq:k/k_1to0} \\
    \lim_{n\to\infty} \log(b(n/k_1))/\log(x_n) = 1 \; ,
    \label{eq:logbnsimlogxn}    \\
    \lim_{n\to\infty} k^{-1/2}\log(x_n) = 0 \; . \label{eq:klogn}
\end{gather}
Then $\displaystyle \frac{k^{1/2}}{\log(x_n)}\left\{ \frac{\breve
    a_{k_1}(x_n)}{a(x_n)} - 1 \right\}$ converges weakly to the
centered Gaussian distribution with variance $\beta^{-2}$.
\end{prop}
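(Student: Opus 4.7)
The plan is to prove the two parts of the proposition in sequence, Part~1 providing a key input for Part~2.

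\textbf{Part 1.} I would invoke the probabilistic representation $X \stackrel{d}{=} H^{\leftarrow}(1 - \mathrm{e}^{-E})$ with $E$ a unit exponential, so that by Assumption~\ref{hypo:weibull},
\[
\log X_{(n:n-i+1)} \stackrel{d}{=} \frac{1}{\beta}\log E_{(n:n-i+1)} - \frac{1}{\beta}\log c + O(E_{(n:n-i+1)}^{\eta})
\]
uniformly in $i\leq k$. Substituting this expansion in the denominator of~(\ref{eq:hatbeta}), invoking R\'enyi's representation of the exponential order statistics $E_{(n:n-i+1)}$ as partial sums of i.i.d.\ exponentials divided by descending integers, and Taylor-expanding $\log E_{(n:n-i+1)}$ around its asymptotic value $\log\log(n/i)$ reduces the problem to a Hill-type CLT for a weighted sum of i.i.d.\ centred exponentials. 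The condition $\log(k)/\log(n) \to 0$ in~(\ref{eq:cond-k-estim-beta}) ensures that the deterministic numerator $\sum_{i=1}^{k}\{\log\log(n/i)-\log\log(n/k)\}$ is well controlled, while $k\log^{2\eta}(n) \to 0$ bounds the bias from the $O(E^{\eta})$ remainder by $o(k^{-1/2})$. A Slutsky argument then delivers $k^{1/2}(\hat\beta_k - \beta) \Rightarrow N(0, \beta^{-2})$.

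\textbf{Part 2.} Since $a(x) = (\sigma/\sqrt{c\beta})x^{1-\beta/2}$ is regularly varying of index $1-\beta/2$, the definition~(\ref{eq:def-breve-a}) yields the algebraic identity
\[
\frac{\breve a_{k_1}(x_n)}{a(x_n)} = \frac{\check a_{k_1}(X_{(n:n-k_1)})}{a(X_{(n:n-k_1)})} \cdot \left(\frac{x_n}{X_{(n:n-k_1)}}\right)^{(\beta - \hat\beta_k)/2},
\]
so that $\log\{\breve a_{k_1}(x_n)/a(x_n)\} = A_n + B_n$ with
$A_n = \log\{\check a_{k_1}(X_{(n:n-k_1)})/a(X_{(n:n-k_1)})\}$ and
$B_n = \tfrac12(\beta - \hat\beta_k)\log(x_n/X_{(n:n-k_1)})$.
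Theorem~\ref{theo:fclt-produit} applied with intermediate sequence $k_1$ under condition~(\ref{eq:rate-k_1}), combined with regular variation of $a$ (to move from $a\circ b(n/k_1)$ to $a(X_{(n:n-k_1)})$), shows that $A_n = O_P(k_1^{-1/2})$; condition~(\ref{eq:k/k_1to0}) then gives $k^{1/2}A_n/\log(x_n) = O_P((k/k_1)^{1/2}/\log(x_n)) = o_P(1)$. For $B_n$, Part~1 together with~(\ref{eq:logbnsimlogxn}) and the asymptotic equivalence $\log X_{(n:n-k_1)} = \log b(n/k_1) + O_P(k_1^{-1/2})$ control the ratio $\log(x_n/X_{(n:n-k_1)})/\log(x_n)$ and deliver the announced Gaussian limit with variance $\beta^{-2}$. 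Finally, condition~(\ref{eq:klogn}) ensures that the quadratic remainder $\tfrac18(\beta-\hat\beta_k)^2\log^2(x_n/X_{(n:n-k_1)})$ arising when passing from $\log\{\breve a_{k_1}/a\}$ to $\breve a_{k_1}/a - 1$ via the expansion of $\exp(\cdot)$ is $o_P(\log(x_n)/k^{1/2})$ and hence negligible after normalisation.

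The main obstacle is Part~1: the uniform control over $i\leq k$ of the remainder $O(E_{(n:n-i+1)}^{\eta})$ in the Weibull-tail expansion from Assumption~\ref{hypo:weibull}, especially for small $i$ where $E_{(n:n-i+1)}$ is closest to $\log n$ and the remainder is largest. The second condition in~(\ref{eq:cond-k-estim-beta}) is decisive here in suppressing the bias below the $k^{-1/2}$ stochastic rate of the Hill-type CLT that governs the denominator of $\hat\beta_k$.
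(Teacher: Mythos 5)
Your overall route in Part~2 coincides with the paper's: the exact power form of $a$ under~(\ref{eq:simplif}) and~(\ref{eq:psi-semiparametrique}) gives the same multiplicative identity, your term $A_n$ is the paper's term~(\ref{eq:negligible1}) (handled identically via Theorem~\ref{theo:fclt-produit} and $k/k_1\to0$), your quadratic remainder plays the role of the paper's term~(\ref{eq:negligible2}) (killed by~(\ref{eq:klogn})), and the limit is carried by the $(\beta-\hat\beta_k)\log(\cdot)$ contribution. The only real divergence is Part~1, where the paper simply cites \citet[Corollary~1]{gardes:girard:2006}; your R\'enyi-representation sketch is a reasonable plan for a self-contained proof but remains a plan: the uniform control of the $O(E_{(n:n-i+1)}^{\eta})$ remainder and the asymptotics of the deterministic numerator are named as the obstacles, not overcome.

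The genuine gap is in your treatment of $B_n=\tfrac12(\beta-\hat\beta_k)\log\bigl(x_n/X_{(n:n-k_1)}\bigr)$. You assert that~(\ref{eq:logbnsimlogxn}) together with $\log X_{(n:n-k_1)}=\log b(n/k_1)+o_P(1)$ ``controls the ratio $\log(x_n/X_{(n:n-k_1)})/\log(x_n)$ and delivers the announced Gaussian limit with variance $\beta^{-2}$'', but you never compute that ratio, and this is exactly the step on which the whole result hinges. Condition~(\ref{eq:logbnsimlogxn}) says $\log b(n/k_1)/\log x_n\to1$, hence $\log(x_n/X_{(n:n-k_1)})/\log(x_n)\to_P 0$; substituting this into your expression gives
\begin{gather*}
\frac{k^{1/2}}{\log x_n}\,B_n=\tfrac12\,k^{1/2}(\beta-\hat\beta_k)\cdot\frac{\log(x_n/X_{(n:n-k_1)})}{\log x_n}=O_P(1)\cdot o_P(1)=o_P(1)\;,
\end{gather*}
i.e.\ a degenerate limit rather than $N(0,\beta^{-2})$ --- and even in the most favorable reading, where the ratio tends to $1$, your factor $\tfrac12$ would produce variance $\beta^{-2}/4$. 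The paper's own proof passes from the sum $X_{(n:n-k_1)}^{(\hat\beta_k-\beta)/2}-1+x_n^{(\beta-\hat\beta_k)/2}-1$ directly to ``$=k^{1/2}(\beta-\hat\beta_k)+o_P(1)$'' after normalization, so the identification of the limiting constant is precisely the computation that must be written out in full; your proposal omits the one display that would either confirm the stated variance or expose the near-cancellation of the $\log X_{(n:n-k_1)}$ and $\log x_n$ contributions, and therefore cannot stand as a proof of the stated limit.
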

\begin{rem}
  By the arguments following
  Assumption~\ref{hypo:second-ordre-produit-modifie}, it can be seen
  that the conclusion of Proposition~\ref{prop:breve-a} still holds if
  Assumption~\ref{hypo:second-ordre-produit-modifie} is replaced by
  Assumption~\ref{hypo:second-ordre-modifie} and $\tilde c$ is
  replaced by $c$ in~(\ref{eq:rate-k_1}).
\end{rem}

The previous results lead to  natural estimators of the conditional
probability $\theta(x,y) = \mathbb{P}(Y\leq y \mid X > x)$ and of the
conditional quantile $y=\theta^\leftarrow(x,p)$.  Define
\begin{gather} \label{eq:thetahat} 
  \hat \theta(x,y)= \Psi\left( \frac{y-\hat \rho x}{\breve a_{k_1}(x)}
  \right) \; .
\end{gather}
Under
Assumptions~\ref{hypo:exces},~\ref{hypo:moments},~\ref{hypo:varrap-produit}
and~(\ref{eq:simplif}), Theorem~\ref{theo:estim-produit} implies that
for {\it fixed} $x$ and $y$, $\hat \theta(x,y)$ is a consistent
estimator of $\Psi \left( (y-\rho x) / a(x) \right)$, but a biased
estimator of $\theta(x,y)$. The remaining bias, which is an
approximation error due to the asymptotic nature of equation
(\ref{eq:approx-theta}), can be bounded thanks to the second order
Assumption~\ref{hypo:second-ordre-produit-modifie}. For more details,
see \citet[Section 3.2]{abdous:fougeres:ghoudi:soulier:2008} for a
treatment in the elliptical case.

We now investigate more thoroughly the estimation of the conditional
quantile $y_n=\theta^\leftarrow(x_n,p)$ for some fixed $p \in (0,1)$
and some extreme sequence $x_n$, i.e. beyond the range of the
observations, or equivalently, $x_n >b(n)$.  An estimator $\hat y_n$
is defined by
\begin{gather}\label{eq:ynhat}
  \hat y_n = \hat \rho_{k_1} x_n + \breve a_{k_1}(x_n) \Psi^{-1}(p)
  \; ,
\end{gather} 
where $\hat\rho_{k_1}$ is the nonparametric estimator defined
in~(\ref{eq:NP-rho}).

\begin{coro} \label{coro:quantile} Let the assumptions of
  Proposition~\ref{prop:breve-a} hold with
  Assumption~\ref{hypo:second-ordre-modifie} instead of
  Assumption~\ref{hypo:second-ordre-produit-modifie} and $c$ instead of $\tilde
  c$ in~(\ref{eq:rate-k_1}), $\Psi'\circ\Psi^{-1}(p)>0$ and
  \begin{align*}
    \lim_{n\to\infty} \frac{b(n/k_1)}{b(n)} = \lim_{n\to\infty}
    \frac{b(n/k_1)}{x_n} = 1 \; .
  \end{align*}
  \begin{enumerate}[(i)]
  \item If $\Psi^{-1}(p)\ne0$, then 
  $$
  \frac{k^{1/2}x_n}{\log(x_n)a(x_n)} \left\{ \frac{\hat y_n}{y_n} - 1
  \right\}
  $$
  converges weakly to a centered Gaussian law with variance $\left\{ {
      \Psi^{-1}(p)}/{\rho\beta} \right\}^2$.
\item If $\Psi^{-1}(p)=0$, then 
  $$
  \frac{k_1^{1/2} x_n}{a(x_n)} \left\{ \frac{\hat y_n}{y_n} - 1
  \right\}
  $$
  converges weakly to a centered Gaussian law with variance $2$.
\end{enumerate}
\end{coro}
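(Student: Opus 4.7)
My plan is to decompose the estimation error into a dominant stochastic term, a negligible stochastic term, and a deterministic bias, and then invoke Proposition \ref{prop:breve-a} and Theorem \ref{theo:fclt-produit} for the stochastic terms.

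The starting point is the identity
\begin{align*}
  \hat y_n - y_n = (\hat\rho_{k_1} - \rho)\, x_n + (\breve a_{k_1}(x_n) - a(x_n))\, \Psi^{-1}(p) - r_n,
\end{align*}
where $r_n := y_n - \rho x_n - a(x_n)\Psi^{-1}(p)$ is a purely deterministic bias coming from the asymptotic approximation~(\ref{eq:approx-theta}). The first task is to bound $r_n$. Since $y_n = \theta^\leftarrow(x_n,p)$ and Assumption~\ref{hypo:second-ordre-modifie} yields $\theta(x_n,\rho x_n + a(x_n)z) - \Psi(z) = O(c\circ b(n/k_1))$ locally uniformly around $z=\Psi^{-1}(p)$ (using $b(n/k_1)/x_n \to 1$), inverting this via the assumption that $\Psi'\circ\Psi^{-1}(p)>0$ gives $r_n = O(a(x_n)\, c\circ b(n/k_1))$. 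By~(\ref{eq:rate-k_1}) this is $o(a(x_n)/k_1^{1/2})$, and a fortiori $o(a(x_n)\log(x_n)/k^{1/2})$ in view of~(\ref{eq:k/k_1to0}) and~(\ref{eq:klogn}).

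The second task is to handle the two stochastic terms and compare their rates. Theorem~\ref{theo:fclt-produit} with $x_n \sim b(n/k_1)$ and $a(x_n) \sim a\circ b(n/k_1)$ (valid since $a\circ b$ is slowly varying) gives
\begin{align*}
  k_1^{1/2} \frac{(\hat\rho_{k_1}-\rho)\, x_n}{a(x_n)} \Longrightarrow W(g_{1,1}) \sim \mathcal N(0,2),
\end{align*}
while Proposition~\ref{prop:breve-a} gives
\begin{align*}
  \frac{k^{1/2}}{\log(x_n)} \left\{\frac{\breve a_{k_1}(x_n)}{a(x_n)} - 1\right\} \Longrightarrow \mathcal N(0,\beta^{-2}).
\end{align*}
Because $k/k_1 \to 0$ and $\log(x_n) \to \infty$, the $\breve a_{k_1}$ term is of strictly larger order than the $\hat\rho_{k_1}$ term, so when $\Psi^{-1}(p) \ne 0$ it dominates; when $\Psi^{-1}(p)=0$ only the $\hat\rho_{k_1}$ term survives.

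Finally, using $y_n \sim \rho x_n$ (which holds since $a(x_n)/x_n \to 0$ under~(\ref{eq:simplif}) and Assumption~\ref{hypo:weibull}), case (i) follows from
\begin{align*}
  \frac{k^{1/2} x_n}{\log(x_n)\, a(x_n)}\left\{\frac{\hat y_n}{y_n}-1\right\} = \frac{\Psi^{-1}(p)}{\rho}\cdot \frac{k^{1/2}}{\log(x_n)} \left\{\frac{\breve a_{k_1}(x_n)}{a(x_n)}-1\right\} + o_P(1),
\end{align*}
while case (ii) follows from the analogous reduction to the $\hat\rho_{k_1}$ term alone. The bulk of the work, and the step I expect to be the main obstacle, is the deterministic bound on $r_n$: one must extract a quantitative rate of convergence for $(y_n-\rho x_n)/a(x_n) \to \Psi^{-1}(p)$ from Assumption~\ref{hypo:second-ordre-modifie}, which requires a careful local inversion argument rather than a direct application of the stated second order condition.
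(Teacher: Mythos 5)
Your proposal is correct and follows essentially the same route as the paper: the same decomposition of $\hat y_n - y_n$ into the two stochastic terms plus the deterministic bias $\tilde y_n - y_n$, the same bound on that bias via the second-order condition combined with a local inversion of $\Psi$ near $\Psi^{-1}(p)$, and the same rate comparison showing that the $\breve a_{k_1}$ term dominates when $\Psi^{-1}(p)\ne 0$ while only the $\hat\rho_{k_1}$ term survives when $\Psi^{-1}(p)=0$. The one step you flag as the main obstacle is handled in the paper exactly as you anticipate: one first shows qualitatively that $z_n=(y_n-\rho x_n)/a(x_n)\to\Psi^{-1}(p)$ by a monotonicity/subsequence argument, and only then applies the mean value theorem with $\Psi'(\xi_n)$ bounded away from zero to convert the uniform bound $\|\theta(x_n,\rho x_n+a(x_n)\cdot)-\Psi\|_\infty=O(c\circ b(n))$ into the quantitative rate for $\Psi^{-1}(p)-z_n$.
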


\section{Numerical Illustration}
\label{sec:simulations}
In this section, we perform a small sample simulation study with three
purposes. We first illustrate the small sample behavior of the nonparametric
estimator of $a$, $m$ and $\Psi$ in the general framework of
Section~\ref{sec:consistency}.  Then we restrict our study to the framework of
section~\ref{sec:product} where we assume that the limiting distribution is a
product and the extreme value index of the distribution of $X$ is zero.  We
analyze the behavior of the Kolmogorov-Smirnov test proposed in
Section~\ref{subsec:nonparam-estim} and we illustrate the behavior of the
estimator of the conditional quantile proposed in
Section~\ref{subsec:SP-estim}.

\subsection{Nonparametric estimation in the general case} 
\label{sec:simul-general}
We consider the bivariate distribution $C(G(x),G(y))$ where $C$ is Frank's
copula, defined for $u,v \in [0,1]$ by
$$
C(u,v) = \frac{1} {\log \theta} \log \left\{ 1- \frac{(1-\theta^u)
    (1-\theta^v)} {1-\theta} \right\} \; ,
$$
$\theta \in (0,1)$ and $G(x)=e^{-x^{-1/\gamma}}$ for $x\geq0$ and $\gamma<1/2$.
Assumption 2 is then fulfilled  with $p^* = 1$ and $q^* = 2$.  This distribution is
in the domain of attraction of a max-stable law with independent margins
(i.e. asymptotically independent in the sense of extreme value distribution).
As argued in the introduction, this is the case where the CEV model is most
interesting, since it provides additional information compared to the classical
extreme value model.
The limiting measure $\nu$ is given for $x>-1/\gamma$ and $y\in\mathbb R$ by
$$
\nu\{(x,\infty] \times (-\infty,y]\} = (1+\gamma x)^{-1/\gamma} \;
\frac{\theta^{1-G(y)}-\theta}{1-\theta} \; .
$$ 
Theoretical values of the normalizing functions are respectively $\psi(t)=
\gamma t$, $m(t)=0$ and $a(t)=1$, and the second margin of the limiting
distribution function is
$$
\Psi(y)=\frac{\theta^{1-G(y)}-\theta}{1-\theta} \; .
$$ 
Note that in this case $\mu \neq 0$, so that $\hat \Psi(z)$ defined by
(\ref{eq:def-hatpsi}) converges weakly to $\Psi(\mu + \tau z)$, where $\tau^2 =
\int y^2 \Psi(dy) - 2 \mu \int y \Psi(dy) + \mu^2$.

The shape parameter $\gamma$ is estimated via the Hill estimator (see
e.g. \citet[Chapter~4]{beirlant:goegebeur:teugels:segers:2004}), which requires
specifying how many upper order statistics will be used. This first threshold
will be denoted by $k_\gamma$ in the following. Using the estimates of $\psi$,
$m$, $a$ and $\Psi$ respectively defined by (\ref{eq:NP-psi}),
(\ref{eq:mhat-NP}), (\ref{eq:def-hata}) and (\ref{eq:def-hatpsi}) also requires
choosing a second threshold, which is the number of observations kept with
largest first component, denoted by $k$ in Section~3.  We have compared the
theoretical value of $\Psi(\mu + \tau \, \cdot )$ with the estimate given by
$\hat \Psi$ for different values of $\theta \in (0,1)$ and $\gamma \in
(0,1/2)$, sample size $n=10^4$ and different thresholds ($k=200, 500, 1000,
2000, 3000$ and $k_\gamma=50, 500$).  Figure~\ref{fig:psiNPplot} illustrates
the estimation of $ \Psi$ via the nonparametric estimator $\hat \Psi$ defined
by (\ref{eq:def-hatpsi}) for one sample.
\begin{figure}[htbp]
 \centering 
 \includegraphics[width=12cm,height=5cm]{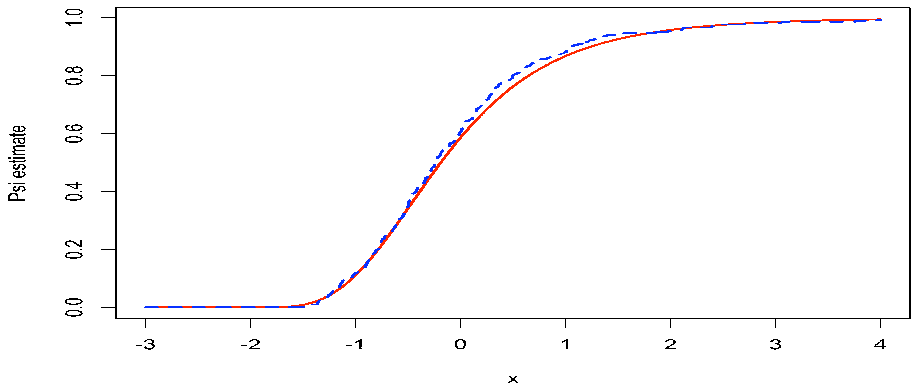}
 \caption{ Estimation of $\Psi(\mu + \tau \, \cdot ) $ via the nonparametric
   estimator $\hat \Psi$ for one sample ($n = 1000, k= 300, k_\gamma = 50$) of
   distribution with Fr\'echet margins of parameter $1/\gamma$ and Frank's
   copula with parameter $\theta$, for $\gamma=0.1$ and $\theta=0.3$.}
     \label{fig:psiNPplot}
\end{figure}
In each case listed above, 100 samples have been simulated, and for each of
them, the $L^1$-distance $\int |\hat \Psi(z) - \Psi(\mu + \tau z) | dz$ has
been calculated.  A summary of the results obtained is provided in Figure
\ref{fig:distL1_10e4_th03_gam01}, which give the boxplots of the
$L^1$-distances for $\theta=0.3$, $\gamma=0.1$ or $\gamma=0.4$, and
$n=10^4$. The results obtained for other values of $\theta$ were very similar,
so for brevity we do not present them.

\begin{figure}[h!]
\begin{tabular}{cc}
\includegraphics[width=7.5cm,height=8cm]{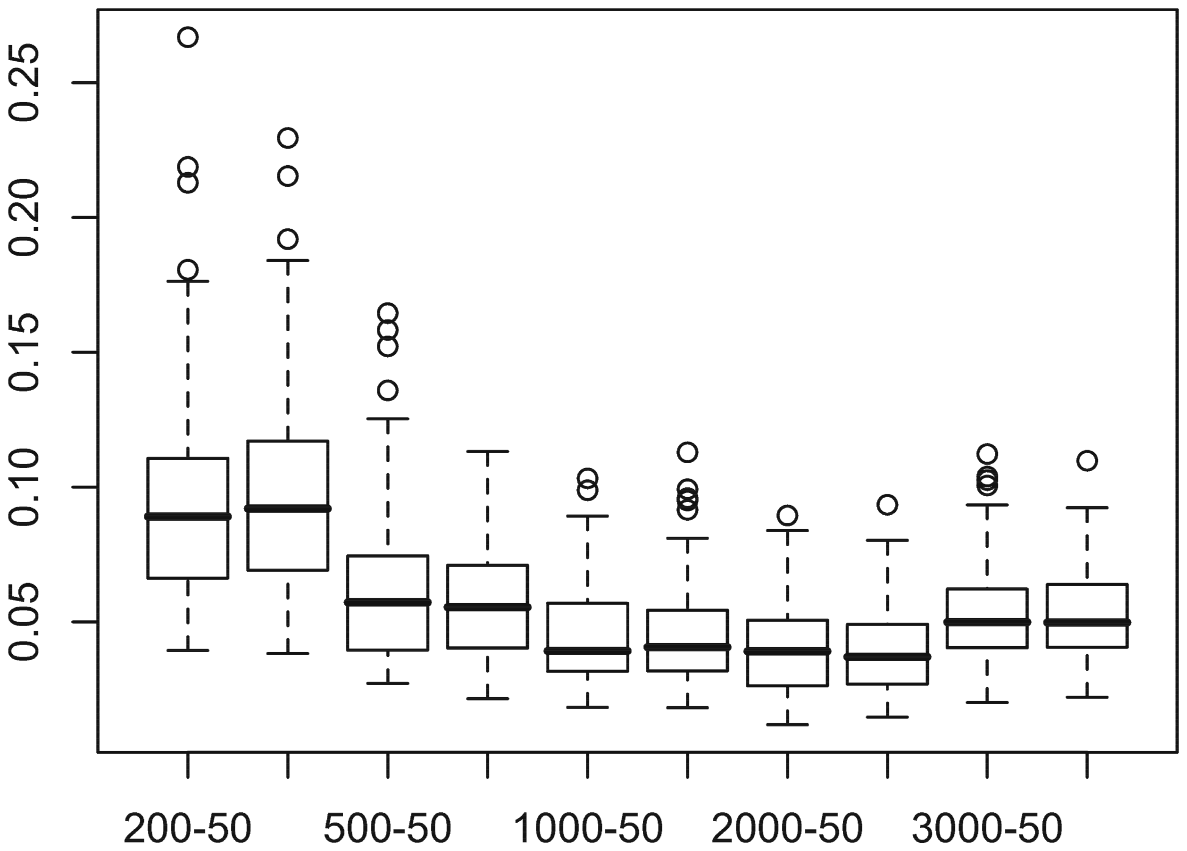}&
\includegraphics[width=7.5cm,height=8cm]{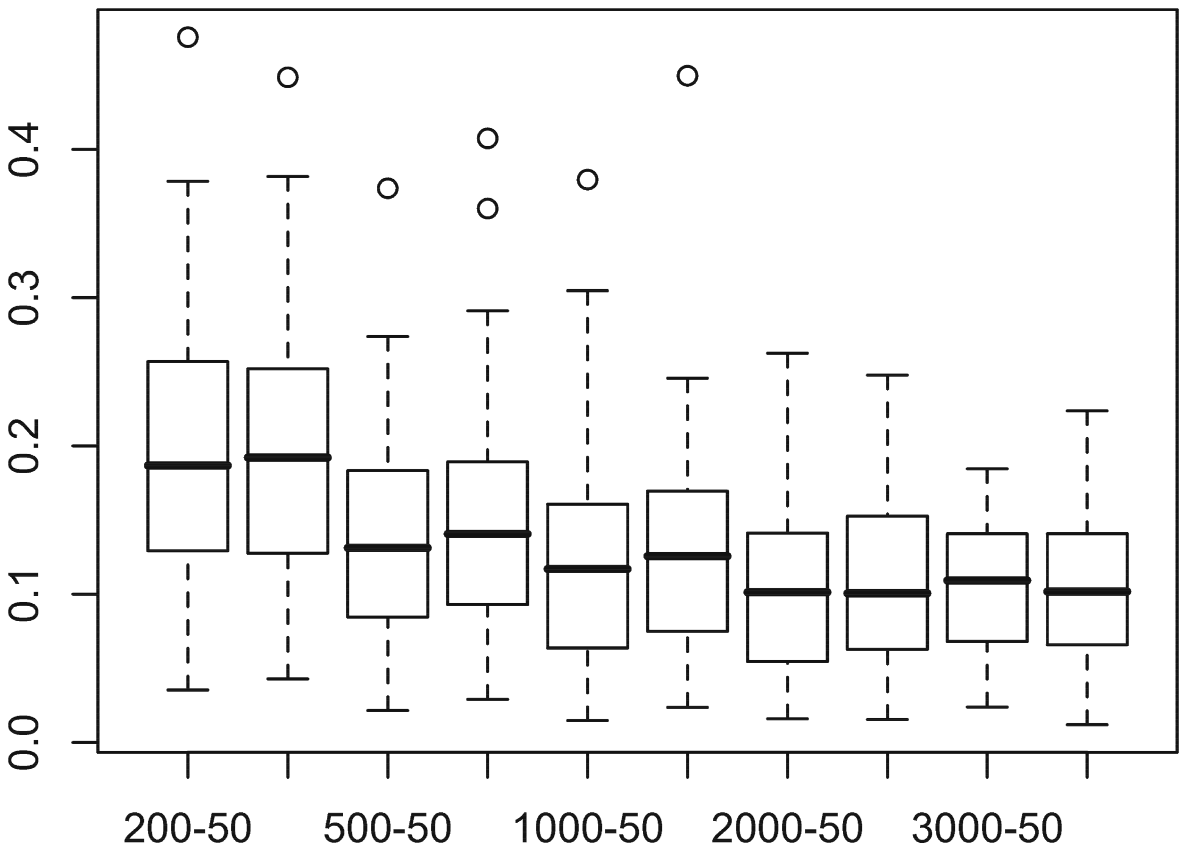}
\end{tabular}
\caption{Boxplots of 100 $L^1$-distances $\int |\hat \Psi(z) - \Psi(\mu + \tau
  z) | dz$ calculated from samples of size $n=10^4$ with Fr\'echet margins of
  parameter $1/\gamma$ and Frank's copula with parameter $\theta$, for
  $\theta=0.3$ and $\gamma=0.1$ (left side) or $\gamma=0.4$ (right side). The
  10 boxplots correspond to different choices of thresholds $k$ and $k_\gamma$:
  from left to right, $(k, k_\gamma)$ has the values: $(200,50)$, $(200,200)$,
  $(500,50)$, $(500,500)$, $(1000,50)$, $(1000,500)$, $(2000,50)$,
  $(2000,500)$, $(3000,50)$, $(3000,500)$.  }
\label{fig:distL1_10e4_th03_gam01}
\end{figure}
A common feature of both plots of Figure~\ref{fig:distL1_10e4_th03_gam01} is
that the results do not depend much on the choice of the threshold $k_\gamma$,
but are a bit more sensitive to the choice of the second threshold
$k$. Besides, the results show that the estimators provided in Section~3
perform well in a context that is rather general.  Finally, these performances
are better when the parameter $\gamma$ is smaller.

\subsection{Goodness-of-fit test for the distribution $\Psi$}
\label{subsec:gof}
Assume that the hypotheses of Section~\ref{sec:product} hold, so that
the nonparametric estimation procedure described in
Section~\ref{subsec:nonparam-estim} can be used.  Three types of
distributions are considered, each of them restricted to the positive
quadrant for convenience.  These distributions
are: \\
{\bf (a)} the elliptical distribution with radial survival function
$P(R>t)=\mathrm{e}^{-t}$, and Pearson correlation coefficient
$\rho=0.5$ ; \\
{\bf (b)} the distribution with radial representation
$R(\cos[(\pi/2+\arcsin \rho)T - \arcsin \rho],\sin[(\pi/2+\arcsin
\rho)T])$, where $P(R>t)=\mathrm{e}^{-t^2/2}$, $T$ has a non uniform
concave density function $f_T(t)=4/\left\{
  \pi+\pi(2t-1)^2\right\}$ on $[0,1]$, and $\rho=0.5$;\\
{\bf (c)} the distribution with radial representation
$R(\cos[(\pi/2+\arcsin \rho)T - \arcsin \rho],\sin[(\pi/2+\arcsin
\rho)T])$, where $P(R>t)=\mathrm{e}^{-t^2/2}$, $T$ has a non uniform
convex density function $f_T(t)=2-4/ \left\{ \pi(1+(2t-1)^2 \right\}$
on $[0,1]$, and $\rho=0.5$.

Case {\bf (a)} is an example of the standard elliptical case, for which
estimation results already exist (see
\cite{abdous:fougeres:ghoudi:soulier:2008}), whereas {\bf (b)} and {\bf (c)}
illustrate the situation where the density level lines are ``asymptotically
elliptic'' (see \cite{fougeres:soulier:2010}).  In these three cases, $\Psi$ is
the Normal distribution function (denoted by~$\Phi$), and Assumption 6 is
fulfilled with $\beta=2$.  Figure~\ref{fig:psiplot} illustrates the estimation
of $\Psi$ via the nonparametric estimator $\check \Psi$ defined by
(\ref{eq:NP-Psi}) for one sample ($n=1000, k=100$) of distribution {\bf (b)}.

\begin{figure}[h]
  \centering
  \includegraphics[width=12cm,height=5cm]{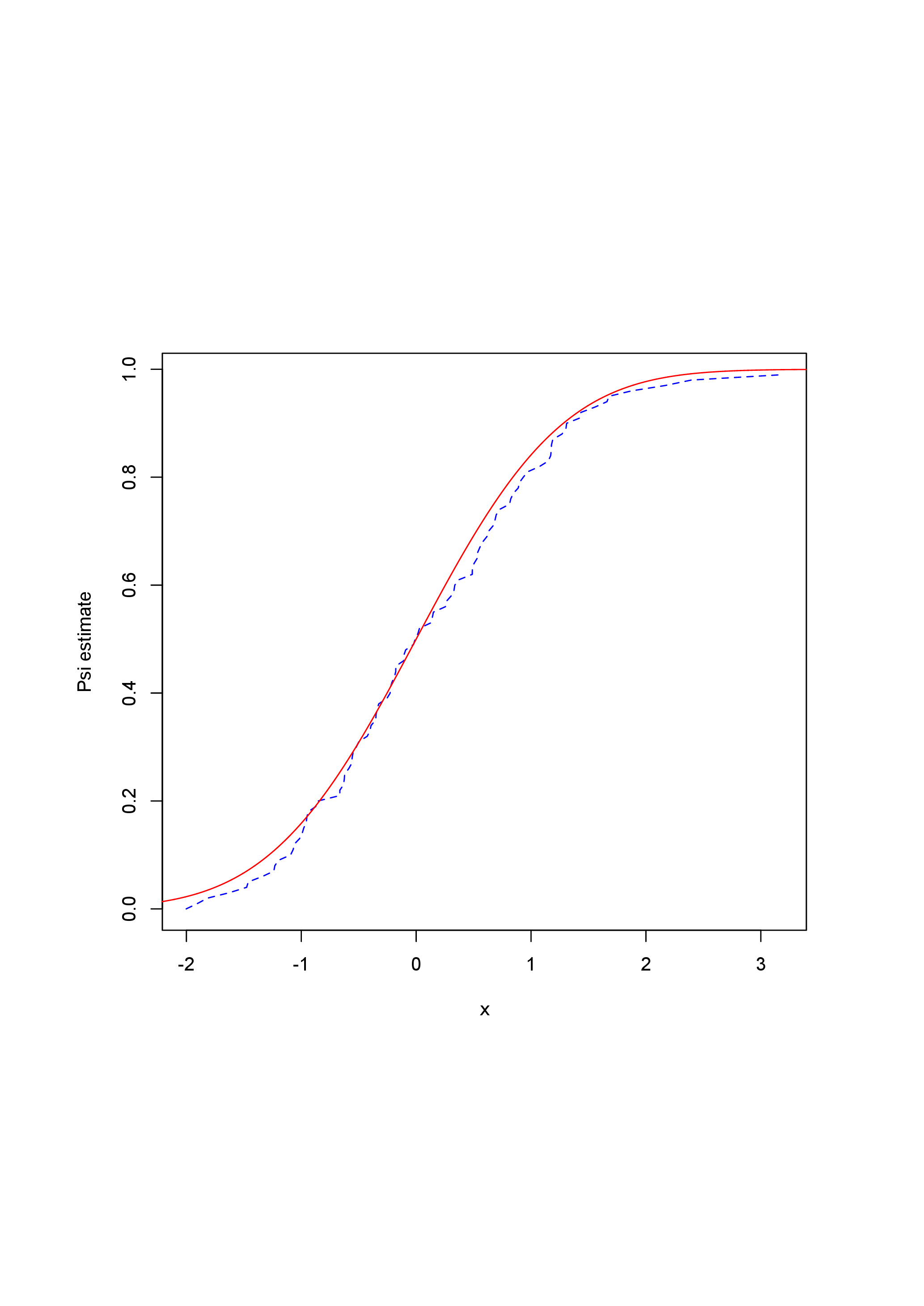}
  \caption{ Estimation of $\Psi$ via the nonparametric estimator $\check \Psi$
    for one sample ($n=1000, k=100$) of distribution {\bf (b)}.}
  \label{fig:psiplot}
\end{figure}

The test statistic $T_{KS}$ of the Kolmogorov-Smirnov goodness-of-fit test
performed here is defined by
\begin{gather}
  \label{eq:stat-KS-norm}
  T_{KS} = \sup_{y \in {\mathbb R}} \sqrt{k} | \check \Psi(y) -
  \Phi(y) | \; .
\end{gather}
As shown in Section~\ref{subsec:nonparam-estim}, $T_{KS}$ has
asymptotically the same distribution as the random variable $Z$
defined in~(\ref{eq:proclim}). Quantiles of this distribution have
been obtained numerically and are listed in
Table~\ref{table:quant-theo}.
\begin{table}[ht]
\begin{center}
\caption{\em Quantiles $q_\alpha$ of order $1-\alpha$
  of $Z$.}
\label{table:quant-theo}
\begin{tabular}{lcccccc}
&&&&&&\\
$\alpha$  & 0.01 & 0.05 & 0.10 & 0.15 & 0.20 & 0.25 \\
\hline
$q_{\alpha}$ &  1.598 & 1.297 & 1.174 &  1.076 & 1.029 & 0.980 \\
\end{tabular}
\end{center}
\end{table}

We have compared these theoretical levels to the empirical levels obtained by
simulation. In the three cases {\bf (a)} to {\bf (c)}, 1000 samples of
size~$n=10^3$, $10^4$ and $10^5$, are simulated. The $k$ observations having
the largest first component are kept, for three different values of $k$, and
the nonparametric estimate $\check \Psi$ given in (\ref{eq:NP-Psi}) is computed
with this reduced sample.  The observed values of the test statistic $T_{KS}$
are compared to the quantiles listed in Table~\ref{table:quant-theo}. For
brevity, we present only the results corresponding to the two theoretical
levels $\alpha=(0.05,0.1)$. These empirical levels are shown in
Table~\ref{table:emp-level}.

\begin{table}[h]
  \caption{\em
    Empirical levels $(\hat \alpha_{0.05}, \hat \alpha_{0.1}) $
    associated to theoretical levels $(0.05, 0.1)$
    for the goodness-of-fit test with statistic $T_{KS}$.
    The original sample size is denoted by $n$,
    and the number of observations used for the estimation is denoted by 
    $k$. Notation
    {\bf (a)}--{\bf (c)} refers to the three bivariate distributions listed 
    above. The boldface characters point out the best result in each case. }
\label{table:emp-level}
$$\begin{array}{ccccc}
n & k & {\mbox{\bf (a)}} & {\mbox{\bf (b)}} & {\mbox{\bf (c)}} \\
\hline
%& 20 & {( 0.027, 0.05  )} & (0.026 ,0.053  ) & (0.009,0.02) , )\\
& 50 &  {{\bf (0.053, 0.095)}} & (0.031, 0.066) & (0.027,0.050)\\
1000 & 100 &  (0.140, 0.231) &  {{\bf (0.055, 0.102)}} &  {{\bf(0.04, 
0.085)}} \\
& 150 &  (0.327, 0.453 )                           & (0.071, 0.147 ) & 
(0.077, 0.153) \\
\hline
%& 20 & {(0.033, 0.062)} & ( 0.017, 0.043) & (,)\\
& 50 & {(0.059, 0.095 )} & ( 0.03, 0.061 ) & (0.028 , 0.045)\\
10000& 100 & {{\bf (0.052, 0.099)}} & (0.038,0.07) & {\bf (0.038, 0.088)} \\
 & 200  &  {( 0.101, 0.183)} & {\bf (0.054,0.096)} & {(0.065, 0.125)}\\
 %& 500 & (0.096, 0.176) &  {(0.054, 0.993)}  & (0.074, 0.149) \\
\hline
& 100 & {\bf (0.051, 0.082)} & (0.037,0.075) &  {\bf (0.044, 0.071)}\\
100000 & 200  & (0.080, 0.133) & {(0.041,0.087)} &  {( 0.0795, 0.128)}\\
& 500  &  {(0.140, 0.257)} & {\bf (0.05,0.103)} & ( 0.20, 0.298)\\
\hline
\end{array}
$$
\end{table}

A common feature for the three distributions is that the results are rather
sensitive to the reduced number of observations $k$. However, the value of $k$
leading to the best adequation between empirical and theoretical levels is
rather stable in most cases studied ($k=100$ in two thirds of the cases).

\subsection{Semiparametric estimation of the conditional quantile function}
\label{subsec:sp-estim}
Assume that
Assumptions~\ref{hypo:exces},~\ref{hypo:varrap-produit},~\ref{hypo:weibull} and
equation~(\ref{eq:simplif}) hold and that the limiting distribution $\Psi$ is
the standard Gaussian distribution $\Phi$.  The small sample behavior of the
semiparametric estimator $\hat y_n(p)$ of the quantile function
$\theta^\leftarrow(x_n,p)$ defined by Equation~(\ref{eq:ynhat}) is illustrated
in Figure~\ref{fig:quantile-estim} for the three distributions presented in
Section~\ref{subsec:gof}.  In each case, 100 samples of size 10000 are
simulated. A proportion of 1\% of the observations is used, which are the 100
observations with largest first component.  For each sample, the conditional
quantile function $\theta^\leftarrow(x,p)$ is estimated for two values of $x$
corresponding to the theoretical $X$-quantiles of order $1-\epsilon$, where
$\epsilon=10^{-4}$ and $\epsilon=10^{-5}$.  Figure~\ref{fig:quantile-estim}
summarizes the quality of these estimations by showing the median, and the
2.5\%- and 97.5\%-quantiles of $\hat y_n(p)$ for the two fixed values of $x$
specified above.

The estimation results are globally good, and the best ones are obtained for
cases {\bf (a)} and {\bf (c)}, see rows 1 and 3 of
Figure~\ref{fig:quantile-estim}.  Besides, one can observe a slight improvement
as the conditioning event becomes more extreme.

These empirical interval confidence compare well with those obtained
by applying the central limit theorem of
Corollary~\ref{coro:quantile}. We do not show them on
Figure~\ref{fig:quantile-estim} for the sake of clarity.

\begin{figure}
  \centering
\includegraphics[width=14cm]{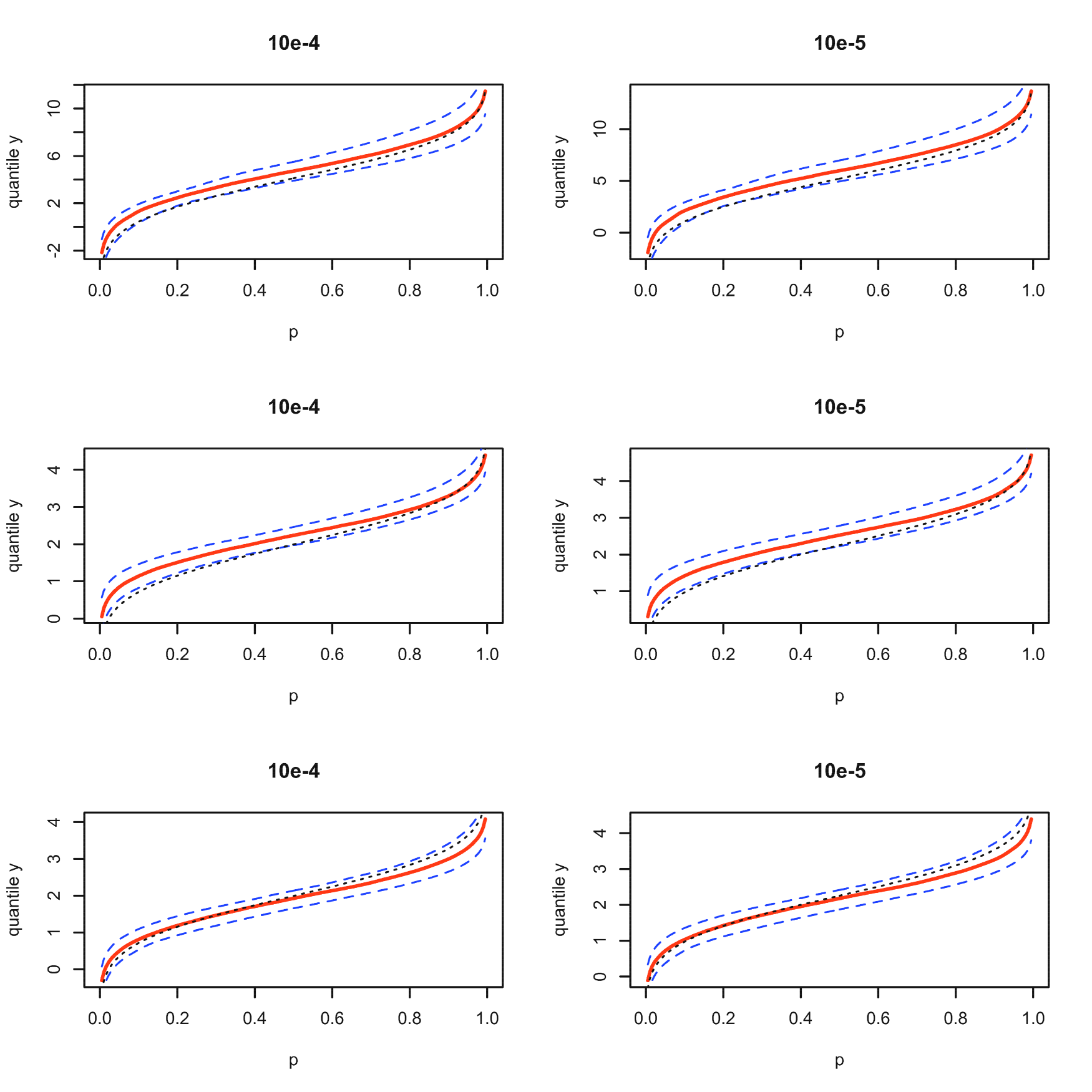}
  \caption{
    Median (solid line), 2.5\%- and 97.5\%-quantiles (dashed lines) of
    the estimated conditional quantile function $\hat
    y=\theta^\leftarrow(x,p)$ defined in~(\ref{eq:ynhat}) and
    theoretical conditional quantile function $y$ (dotted line) as a
    function of the probability $p \in (0,1)$.  Each row (from 1 to 3)
    corresponds to a distribution (from {\bf (a)} to {\bf (c)}) as
    described in Section~\ref{subsec:gof}.  Each column refers to a
    different value of $x$, respectively corresponding to the
    theoretical $X$-quantiles of order $1-\epsilon$, where
    $\epsilon=10^{-4}$ and $p=10^{-5}$.  }
  \label{fig:quantile-estim}
\end{figure}

\section{Data analysis}
\label{sec:data}
To illustrate the use of the new procedures, and more specifically the
Kol\-mo\-go\-rov-Smirnov goodness-of-fit test proposed in
Section~\ref{subsec:nonparam-estim}, the hypothesis of $\Psi=\Phi$,
where $\Phi$ is the standard Gaussian cdf, is tested using the series
of monthly returns for the 3M stock and the Dow Jones Industrial
Average from January 1970 to January 2008 ($n=457$ values). These data
were used by \cite{levy:duchin:2004} and revisited by
\cite{abdous:fougeres:ghoudi:soulier:2008}. In the latter paper, the
hypothesis of bivariate ellipticity was accepted through a test of
elliptical symmetry proposed by \cite{huffer:park:2007} and the
contagion from the Dow Jones to the 3M stock was tested.  As shown in
\cite{abdous:fougeres:ghoudi:2005}, ellipticity implies that
Condition~(\ref{eq:loi-limite}) holds and that the limiting
distribution is the Gaussian law. The present procedure allows to test
for the Gaussian conditional limit law without assuming ellipticity,
but the weaker assumption~(\ref{eq:loi-limite}).  The observed values
of the test statistic $T_{KS}$ defined by (\ref{eq:stat-KS-norm}) in
terms of different choices of threshold $k$ (or equivalently in terms
of the proportion $r$ of observations used, $k=nr$) are summarized in
Table~\ref{table:pval-data}. According to
Table~\ref{table:quant-theo}, all these observed values correspond to
a $p$-value greater than 0.25, which leads to accept the hypothesis
$\Psi=\Phi$.

\begin{table}[h]
\begin{center}
\caption{\em Observed values $t_{KS}$ of the test statistic $T_{KS}$ 
  defined by (\ref{eq:stat-KS-norm}) in terms of the proportion $r$ or
  number~$k$ of observations used.}
\label{table:pval-data}
\begin{tabular}{lcccc}
&&&&\\
$r$  & 0.05 &  0.10 & 0.15 & 0.20 \\
\hline
$k$  & 22 & 45 & 68 & 91  \\
\hline
$t_{KS}$ & 0.842   & 0.847 & 0.777 &  0.948  \\
\end{tabular}
\end{center}
\end{table}

\section{Proofs}
\label{sec:preuves}

\begin{proof} [Proof of Proposition~\ref{prop:moments-tildenun}]
  By Proposition~\ref{prop:conv-tildenun}, the weak convergence of
  $\tilde\nu_n$ to $\nu$ implies that for any compact set $K$ of
  $(-1/\gamma,\infty) \times (-\infty,\infty)$ such that $\nu(\partial
  K)=0$ and any function $h$, it holds that
  \begin{gather*}
    \lim_{n\to\infty} \iint_K h(x,y) \tilde\nu_n(\mathrm d x, \mathrm d y) = \iint_K
    h(x,y) \nu(\mathrm d x, \mathrm d y) \ \mbox{ in probability.} 
  \end{gather*}
  For $\epsilon,M>0$, $\epsilon<1/\gamma$, define $K =
  [-\epsilon,M]\times[-M,M]$ and $K^{c} = [-\epsilon,\infty) \times
  (-\infty,\infty)\setminus K$. Let $h$ be a nonnegative function on
  $[-\epsilon,\infty) \times (-\infty,\infty)$ such that $h(x,y) \leq
  C (|x|\vee1)^{q^*-1} (|y|\vee1)^{p^*-1}$. We must prove that
\begin{align} \label{eq:petite-tension}
  \limsup_{M\to\infty} \lim_{n\to\infty} \iint_{K^c} h(x,y) \tilde
  \nu_n(\mathrm d x, \mathrm d y) = 0 \; ,
\end{align}
in probability.
Since 
\begin{align*}
  \mathbb{E}\left[ \iint_{K^c} h(x,y) \tilde \nu_n (\mathrm d x,\mathrm d y) \right] = \iint_{K^c}
  h(x,y) \nu_{n/k}(x,y) \; ,
\end{align*}
Assumption~\ref{hypo:moments} implies that 
\begin{multline*}
  \lim_{M\to\infty} \limsup_{n\to\infty} \mathbb{E}\left[ \iint_{K^c} h(x,y)
  \tilde  \nu_n  (\mathrm d x,\mathrm d y) \right] \\
  = \lim_{M\to\infty} \limsup_{n\to\infty} \iint_{K^c} h(x,y)
  \nu_{n/k}(\mathrm d x,\mathrm d y) 
  = \lim_{M\to\infty} \iint_{K^c} h(x,y) \nu(\mathrm d x,\mathrm d y) \;  .
\end{multline*}
This yields~(\ref{eq:petite-tension}) and concludes the proof of
Proposition~\ref{prop:moments-tildenun}.
\end{proof}

\begin{proof}[Proof of Proposition~\ref{prop:coco}]
  Assumption~\ref{sec:hypo} and \cite[Proposition 5.3]{resnick:1986}
  imply that the sequence of point processes
  \begin{align*}
    \sum_{k=1}^n \delta_{\frac{X_k - b(n)}{\psi\circ b(n)},\frac{Y_k - m\circ b(n)}{a\circ b(n)}}
  \end{align*}
  converges weakly to a Poisson point process on
  $(-1/\gamma,\infty]\times(-\infty,\infty)$ with intensity measure $\nu$. This
  implies that the vector $({(X_{(n:n)} - b(n))}/{\psi\circ b(n)},({Y_{[n:n]} -
    m\circ b(n)})/{a\circ b(n)})$ converges weakly to the distribution $F$ defined
  in~(\ref{eq:loi-limite}). 

\end{proof}
\begin{proof}[Proof of Proposition~\ref{prop:estim-m}]
  Write
  \begin{align*}
    \frac{ \hat m(X_{(n:n-k)}) - m\circ b(n/k)}{a\circ b(n/k)} =
    \frac{S_n}{T_n}  \; ,
  \end{align*}
with
\begin{align*}
  S_n & = \frac1k \sum_{i=1}^k \frac{Y_{[n:n-i+1]} - m\circ b(n/k) }
  {a \circ b(n/k)} \; \frac{X_{(n:n-i+1)} - X_{(n:n-k)}}
  {\psi \circ b(n/k)} \; ,  \\
  T_n & = \frac1k \sum_{i=1}^k \frac{X_{(n:n-i+1)} - X_{(n:n-k)}}{\psi
    \circ b(n/k)} \; .
\end{align*}
We have already seen that $T_n$ converges weakly to~$1/(1 - \gamma)$.
Recall that we have defined
\begin{gather*}
  \tilde x_n = \frac{X_{(n:n-k)} - b(n/k)}{\psi \circ b(n/k)} \; .
\end{gather*}
By definition of $\tilde \nu_n$, we have, (with $x_+=\sup(x,0)$ for
any real number $x$)
\begin{align}
  S_n & = \frac1k \sum_{i=1}^n \frac{Y_i - m \circ b(n/k) }
  {a(X_{(n:n-k)})} \; \left\{ \frac{X_i - b(n/k) } {\psi\circ b(n/k)}
    - \tilde x_n \right\}_+ = \int_{\tilde x_n}^\infty
  \int_{-\infty}^\infty (x-\tilde x_n) y
  \,  \tilde \nu_n(\mathrm d x,\mathrm d y)  \nonumber \\
  & = \int_{0}^\infty \int_{-\infty}^\infty x y \, \tilde \nu_n(\d
  x,\mathrm d y) - \int_0^{\tilde x_n} \int_{-\infty}^\infty x y \, \tilde
  \nu_n(\mathrm d x,\mathrm d y) - \tilde x_n \int_{\tilde x_n}^\infty
  \int_{-\infty}^\infty y \, \tilde \nu_n(\mathrm d x,\mathrm d y) \; .
  \label{eq:tout-avoue}
\end{align}
By Proposition~\ref{prop:moments-tildenun}, the first term
in~(\ref{eq:tout-avoue}) converges to $\mu/(1-\gamma)$.  Under
Assumption~\ref{hypo:exces}, it is well known that $\tilde x_n =
o_P(1)$.  Cf.  \citet[Theorem~2.2.1]{dehaan:ferreira:2006}. This and
Assumption~\ref{hypo:moments} imply that the last two terms
in~(\ref{eq:tout-avoue}) are $o_P(1)$.  Thus $S_n$ converges weakly to
$\mu/(1-\gamma)$ by Proposition~\ref{prop:moments-tildenun}.  If
$m(x)=\rho x$, then $\hat \rho - \rho \sim X_{(n:n-k)}^{-1}
a(X_{(n:n-k)}) \mu$ which converges to 0 if $a(x)=o(x)$ or if $\mu=0$
and $a(x)=O(x)$.
\end{proof}

\begin{proof}[Proof of Proposition~\ref{prop:estim-a}]
  We show that $\hat a^2(X_{(n:n-k)})/ a^2 \circ b(n/k)$ converges
  weakly to~1. Recall that 
    $\xi_n = \{\hat m(X_{(n:n-k)}) - m \circ b (n/k)\}/a \circ b (n/k)$. 
  By Proposition~\ref{prop:estim-m},  $\xi_n = o_P(1)$, and noting
  that $\tilde\nu_n\{[\tilde x_n,\infty]\times[-\infty, \infty]\}=1$,
  where $\tilde\nu_n$ and $\tilde x_n$ are respectively defined by
  (\ref{eq:def-tilde-nu_n}) and (\ref{eq:def-xin}), we have
  \begin{align*}
    \frac{\hat a^2(X_{(n:n-k)})}{a^2\circ b(n/k)} & = \frac1k
    \sum_{i=1}^n \left\{\frac{ Y_i-m\circ b(n/k)}{a\circ b(n/k)} -
      \xi_n \right\}^2\mathbf{1}_{\{ \frac{X_i-b(n/k)}{\psi\circ b(n/k)}
      \geq \tilde x_n\}} 
    \\
    & = \int_{\tilde x_n}^\infty \int_{-\infty}^\infty
    (y-\xi_n)^2 \,
    \tilde\nu_n(\mathrm d x,\mathrm d y)  \\
    & = \int_{\tilde x_n}^\infty \int_{-\infty}^\infty
    y^2 \, \tilde\nu_n(\mathrm d x,\mathrm d y) - 2 \xi_n \int_{\tilde x_n}^\infty
    \int_{-\infty}^\infty y \, \tilde\nu_n(\mathrm d x,\mathrm d y) + \xi_n^2    \\
    & = \int_{0}^\infty \int_{-\infty}^\infty y^2 \, \tilde\nu_n(\d
    x,\mathrm d y) + o_P(1) \; . 
  \end{align*}
  Thus $\hat a(X_{(n:n-k)})/a \circ b(n/k)$ converges weakly to~1 by
  Proposition~\ref{prop:moments-tildenun} and equation
  (\ref{eq:standardisation-variance}).
\end{proof}

\begin{proof}[Proof of Proposition~\ref{prop:clt-fidi}]
  Note first that assumption~(\ref{eq:condition-second-ordre-k}) implies that
  $k$ is an intermediate sequence, i.e. $n/k\to0$, since $\lim_{t\to\infty}
  c\circ b(t) = 0$.  We start by proving the convergence of the finite
  dimensional distributions of $W_n$. Denote $G_n(x,y)=
  \nu_n((x,\infty)\times(-\infty,y])$, $G(x,y) =
  \nu((x,\infty)\times(-\infty,y])$, $\tilde X_i = \{X_i - b(n/k)\}/\psi\circ
  b(n/k)$, $\tilde Y_i = \{Y_i - m \circ b(n/k)\}/a\circ b(n/k)$ and
  \begin{align*}
    \xi_{n,i}(x,y) & = k^{-1/2} \{\mathbf{1}_{\{\tilde X_i > x \, , \;
      \tilde
      Y_i \leq y\}} - \mathbb{P}(\tilde X_i>x \;, \tilde Y_i\leq y)\} \\
    & = k^{-1/2} \{\mathbf{1}_{\{\tilde X_i > x \, , \; \tilde Y_i \leq y\}}
    - kn^{-1} G_{n/k}(x,y)\} \; .
  \end{align*}
  Then for each $n$, the random variables $\xi_{n,i}$, $1 \leq i \leq
  n$ are i.i.d., 
\begin{align*}
  \mathrm{cov}(\xi_{n,i}(x,y),\xi_{n,i}(x',y')) = \frac1n G_{n/k}(x \vee x',y
  \wedge y') - \frac{k}{n^2} G_{n/k}(x,y ) G_{n/k}(x',y' ) \; ,
\end{align*}
and
  \begin{align*}
    W_n(x,y) = \sum_{i=1}^n \xi_{n,i} (x,y) + k^{1/2} \{G_{n/k}(x,y) -
    G(x,y)\} \; .
  \end{align*}
  Assumption~\ref{hypo:second-ordre-modifie}
  and~(\ref{eq:condition-second-ordre-k}) imply that $k^{1/2}(G_{n/k}-G)$
  converges to zero locally uniformly.  The Lindeberg central limit
  theorem (cf. \cite{araujo:gine:1980})
  and~(\ref{eq:condition-second-ordre-k}) yield the convergence of
  finite dimensional distributions of $\sum_{i=1}^n \xi_{n,i} (x,y)$
  to the Gaussian process with covariance defined
  by~(\ref{eq:cov-W}).  

  Let $\mathcal K$ be a compact set of $(-1/\gamma,\infty) \times
  (-\infty,\infty)$.  The tightness of the sequence of processes
  $\{\sum_{i=1}^n \xi_{n,i}(x,y),(x,y)\in \mathcal K\}$ is obtained by applying
  \cite[Example 2.11.8]{vandervaart:wellner:1996} with $c_{n,i} = 1/\sqrt k$,
  $P_{n,i} = P_n = \mathbb{P}((\tilde X_i,\tilde Y_i)\in \cdot)$ and the set of
  functions $\mathcal F$ is the set of indicators
  $\mathbf{1}_{\{(x,\infty)\times(-\infty,y]\}}$ for $(x,y)\in\mathcal K$. The
  conditions of Example 2.11.8 are satisfied, since $\max_{1\le i\leq
    n}|c_{n,i}|\to 0$ trivially, and
  \begin{align*}
    \sum_{i=1}^n c_{n,i}^2P_{n,i} = \frac nk P_n \to F \; , 
  \end{align*}
  by Assumption~\ref{hypo:exces} and Proposition~\ref{prop:conv-tildenun}.
  Finally, the class $\mathcal F$ satisfies the uniform entropy condition, as
  shown in \cite[Example 2.5.4]{vandervaart:wellner:1996}.

  This proves the convergence of the sequence of processes$\sum_{i=1}^n
  \xi_{n,i}$ to $W$ uniformly on compact sets of
  $(-1/\gamma,\infty)\times(-\infty),\infty)$.

  We now prove the second part of Proposition~\ref{prop:clt-fidi}.  Let $h$ be
  a $C^\infty$ function with compact support in
  $(-1/\gamma,\infty)\times(-\infty,\infty)$.  The weak convergence of $W_n$ in
  $\mathcal D((-1/\gamma,\infty)\times(-\infty,\infty))$ implies that $\iint
  h(x,y) W_n(x,y) \, \mathrm d x \, \mathrm d y$ converges weakly to $\iint h(x,y) W(x,y) \,
  \mathrm d x \, \mathrm d y$.  Thus, by integration by parts, it also holds that $\iint
  h(x,y) W_n(\mathrm d x,\mathrm d y)$ converges weakly to $\iint h(x,y) W(\mathrm d x,\mathrm d y)$.  Let
  $\epsilon\in(0,1/\gamma)$ and define
  $A=[-\epsilon,\infty)\times(-\infty,\infty)$. Let $g$ be a measurable
  function defined on $A$ such that $|g(x,y)|^2 \leq
  C(|x|\vee1)^{p^\dag}(|y|\vee1)^{q^\dag}$.  Then, for all $\epsilon>0$, there
  exists a $C^\infty$ function $h$ with compact support in $A$ such that
  $\int_A (g-h)^2 \, \mathrm d \nu \leq \epsilon$.  Then,
  \begin{align*}
    \int_A g \,  \mathrm d \tilde \mu_n = \int_A h \, \mathrm d \tilde  \mu_n + \int_A (g-h)
    \, \mathrm d \tilde \mu_n \; .
  \end{align*}
  The first term in the right hand side converges weakly to $W(h)$ and
  we prove now that the second one converges in probability to 0.
  Denote $u=g-h$ and 
\begin{gather*}
   \mu_n = k^{1/2} \{\nu_{n/k}-\nu\} \; .
\end{gather*}
Then,
  \begin{align*}
    \int_A u \, \mathrm d \tilde \mu_n = k^{-1/2} \sum_{i=1}^n \{
    u(\tilde X_i,\tilde Y_i) - \mathbb{E}[u(\tilde X_i,\tilde Y_i)]\} +
    \int_A u \, \mathrm d \mu_n \; .
  \end{align*}
  By definition, for any function $v$, $\mathbb{E}[v(\tilde X_1)] = kn^{-1}
  \int v \, \mathrm d\nu_{n/k}$, thus
\begin{align*}
  \mathbb{E} \left[ \left( \int_A u \, \mathrm d \tilde \mu_n \right)^2 \right]
  \leq \int_A u^2 \, \mathrm d \nu_{n/k} + \left\{ \int_A u \, \mathrm d \mu_n
  \right \}^2 \; .
\end{align*}
By assumption on $g$, and since $h$ has compact support, it also holds
that $u^2(x,y) \leq C (|x|\vee1)^{p^\dag}(|y|\vee1)^{q^\dag}$. Thus,
by Assumption~\ref{hypo:second-ordre-modifie}
and~(\ref{eq:condition-second-ordre-k}), it holds that
$\lim_{n\to\infty} \int_A u \, \mathrm d \mu_n = 0$ and $\lim_{n\to\infty}
\int_A u^2 \, \mathrm d \nu_n = \int_A u^2 \, \mathrm d \nu$. Thus
\begin{gather*}
  \limsup_{n\to\infty} \mathbb{E} \left[ \left( \int_A u \, \mathrm d \tilde \mu_n
    \right)^2 \right] \leq \int_A u^2 \, \mathrm d \nu \leq \epsilon \; .
\end{gather*}
Taking into account that $\mathrm{var}(W(g)-W(h)) = \mathrm{var}(W(g-h))=
\int_A (g-h)^2 \, \mathrm d \nu \leq \epsilon$, we conclude that $W_n(g)$ converges
weakly to $W(g)$ and that $\mathbb{E}[W^2(g)] = \int g^2 \mathrm d \nu$. The joint
convergence of $W_n(g_1),\dots,W_n(g_k)$ is obtained by the Cramer-Wold device,
and by linearity of $W_n$ and $W$, this is reduced to the one-dimensional
convergence.
\end{proof}

\begin{proof}[Proof of Corollary~\ref{coro:tilde}]
  We prove separately the claimed limit distributions. The joint
  convergence is obvious.
  We start with $\tilde x_n$, defined in~(\ref{eq:def-xin}).
  Denote $\mathbb{G}_n(x) = \tilde
  \nu_n((x,\infty)\times(-\infty,+\infty))$.  By
  Proposition~\ref{prop:clt-fidi}, $k^{1/2}(\mathbb G_n - \bar
  P_\gamma)$ converges weakly in $\mathcal D$ to the process $B \circ
  \bar P_\gamma$, where $B$ is a standard Brownian motion on $[0,1]$.
  By Vervaat's Lemma \citep[Lemma~A.0.2]{dehaan:ferreira:2006},
  $k^{1/2}\{\mathbb G_n^\leftarrow - \bar P_\gamma^\leftarrow\}$
  jointly converges weakly in $\mathcal{D}$ to $-(\bar
  P_\gamma^\leftarrow)' B$.  Since $\mathbb G_n^\leftarrow(1) = \tilde
  x_n$, $\bar P_\gamma^\leftarrow(1)=0$ and $(\bar
  P_\gamma^\leftarrow)'(1) = -1$, we get the claimed limit
  distribution for $k^{1/2}\tilde x_n$.

We now consider $\xi_n$, defined in~(\ref{eq:def-xin}). By definition,
  \begin{align*}
    \xi_n & = \frac{\sum_{i=1}^k \{X_{(n:n-i+1)} - X_{(n:n-k)}\}
      \{Y_{[n:n-i+1]} - m \circ b(n/k)\}}{k \psi \circ
      b(n/k) a\circ b(n/k)}    \\
    & \hspace*{5cm} \div \ \frac{\sum_{i=1}^k \{X_{(n:n-i+1)} -
      X_{(n:n-k)\}} }{k \psi \circ b(n/k)}    \\
    & = \frac{ \int_{\tilde x_n}^\infty \int_{-\infty}^\infty
      (x-\tilde x_n) y \tilde \nu_n(\mathrm d x, \mathrm d y)} { \int_{\tilde
        x_n}^\infty \int_{-\infty}^\infty (x - \tilde x_n) \tilde
      \nu_n(\mathrm d x, \mathrm d y)} \; .
  \end{align*}
  Since $\mu=0$ by assumption, we obtain
  \begin{gather*}
    k^{1/2} \xi_n = \frac{ \int_{\tilde x_n}^\infty
      \int_{-\infty}^\infty (x-\tilde x_n) y \tilde \mu_n(\mathrm d x, \mathrm d y)}
    { \int_{\tilde x_n}^\infty \int_{-\infty}^\infty (x - \tilde x_n)
      \tilde \nu_n(\mathrm d x, \mathrm d y)} \; .
  \end{gather*}
  Since $\tilde x_n = O_P(k^{-1/2})$, it is easily seen that
  \begin{gather*}
    k^{1/2} \xi_n = \frac{ \int_{0}^\infty \int_{-\infty}^\infty x y
      \tilde \mu_n(\mathrm d x, \mathrm d y) + o_P(1)} { \int_{0}^\infty
      \int_{-\infty}^\infty x \tilde \nu_n(\mathrm d x, \mathrm d y) + o_P(1)} \; .
  \end{gather*}
  Applying Propositions~\ref{prop:moments-tildenun}
  and~\ref{prop:clt-fidi}, we obtain that $k^{1/2}\xi_n$ converges
  weakly to $(1-\gamma)W(g_{1,1})$.  Consider now $\hat
  a(X_{(n:n-k)})$. As in the proof of Proposition~\ref{prop:estim-a},
  we write
  \begin{align*}
    \frac {\hat a^2 (X_{(n:n-k)})}{a^2\circ (n/k)} & = \int_{ \tilde
      x_n}^\infty \int_{-\infty}^\infty y^2 \tilde \nu_n(\mathrm d x,\mathrm d y) -
    2 \xi_n \int_{\tilde x_n}^\infty \int_{-\infty}^\infty y \tilde
    \nu_n(\mathrm d x,\mathrm d y) + \xi_n^2 \; ,
  \end{align*}
  and since $\tilde x_n = O_P(k^{-1/2})$ and $\xi_n=O_P(k^{-1/2})$, we
  get
\begin{align*}
  k^{1/2} \left\{ \frac{\hat a^2(X_{(n:n-k)})}{a^2\circ b(n/k)} - 1
  \right\} = \int_0^\infty \int_{-\infty}^\infty y^2 \tilde \mu_n (\d
  x, \mathrm d y) + o_P(1) \; .
\end{align*}
Proposition~\ref{prop:clt-fidi} and the delta method yield that
$k^{1/2} \{{\hat a(X_{(n:n-k)})}/{a\circ b(n/k)} - 1\}$ converges
weakly to $\frac12 W(g_{0,2})$.
\end{proof}

\begin{proof}[Proof of Proposition~\ref{prop:breve-a}]
  The asymptotic normality of $\hat \beta_k$ is proved (under more
  general conditions) in \citet[Corollary~1]{gardes:girard:2006}.
  Consider now $\breve a_{k_1}(x_n)$. 
  By~(\ref{eq:simplif}) and~(\ref{eq:psi-semiparametrique}),
  \begin{align*}
    a(x) = a(X_{(n:n-k_1)})
  \left(\frac{x}{X_{(n:n-k_1)}}\right)^{1-\beta/2} \; ,
  \end{align*}
thus, by~(\ref{eq:def-breve-a}), we obtain
\begin{align*}
  \frac{\breve{a}_{k_1}(x_n)}{a(x_n)} = \frac{\check a_{k_1} (X_{(n:n-k_1)})}
  {a(X_{(n:n-k_1)})} \; X_{(n:n-k_1)}^{(\hat \beta_k-\beta)/2} \;
  x_n^{(\beta-\hat\beta_k)/2} \; .
\end{align*}
Decomposing further, we get 
\begin{align}
  \frac{\breve{a}_{k_1}(x_n)}{a(x_n)} - 1 & = \left\{\frac{\check a_{k_1}
      (X_{(n:n-k_1)})} {a(X_{(n:n-k_1)})} -1 \right\}
  X_{(n:n-k_1)}^{(\hat \beta_k-\beta)/2} \; x_n^{(\beta-\hat\beta_k)/2}
  \label{eq:negligible1}  \\
  & + \left\{ X_{(n:n-k_1)}^{(\hat \beta_k - \beta)/2} -1 \right\} \left\{
    x_n^{(\beta-\hat\beta_k)/2} - 1 \right \} \label{eq:negligible2} \\
  & + X_{(n:n-k_1)}^{(\hat \beta_k - \beta)/2} -1 +
  x_n^{(\beta-\hat\beta_k)/2} - 1 \; .
\end{align}
Since $\hat \beta_k - \beta= O_P(k^{-1/2})$, $\log(x_n) = o(k^{1/2})$
and $k/k_1\to0$, we obtain
\begin{align*}
  x_n^{(\beta-\hat\beta_k)/2} - 1 & \sim (\beta-\hat \beta_k) \log(x_n)/2 \; , \\
  X_{(n:n-k_1)}^{(\beta-\hat\beta_k)/2} - 1 &  \sim (\beta-\hat \beta_k)
  \log(X_{(n:n-k_1)})/2 \sim (\beta-\hat \beta_k) \log(b(n/k_1))/2 \; ,
\end{align*}
where the equivalence relations above hold in probability.  Thus, by
the first part of Proposition~\ref{prop:breve-a}
and~(\ref{eq:logbnsimlogxn}) the product in~(\ref{eq:negligible2}) is
$O_P(k^{-1}\log^2(x_n)) = o_P(k^{-1/2}\log(x_n))$ by~(\ref{eq:klogn}).
By Theorem~\ref{theo:fclt-produit}, $\check
a_{k_1}(X_{(n:n-k_1)})/a(X_{(n:n-k_1)}) - 1 = O_P(k_1^{-1/2})$, thus the
term in the right hand side of~(\ref{eq:negligible1}) is
$O_P(k_1^{-1/2}) = o_P(k^{-1/2}\log(x_n))$ since $k/k_1\to0$.
Altogether, these bounds yields,
\begin{align*}
  \frac{k^{1/2}}{\log(x_n)} \left\{\frac{\breve{a}_{k_1}(x_n)}{a(x_n)} -
    1\right\} = k^{1/2} (\beta - \hat \beta_k) + o_P(1) \; ,
\end{align*}
and the proof follows from the asymptotic normality of $k^{1/2} (\beta
- \hat \beta_k)$.
\end{proof}

\begin{proof}[Proof of Corollary~\ref{coro:quantile}]
  Define $\tilde y_n = \rho x_n + a(x_n) \Psi^{-1}(p)$. Then
\begin{align*}
  \hat y_n - y_n & = \hat y_n - \tilde y_n + \tilde y_n - y_n \\
  & = (\hat \rho_{k_1} -\rho) x_n + (\breve a_{k_1}(x_n) - a(x_n)) \Psi^{-1}(p) +
  \tilde y_n - y_n \; .
\end{align*}
In order to study $\tilde y_n-y_n$, denote $z_n = (y_n - \rho
x_n)/a(x_n)$. Then $\lim_{n\to\infty} z_n = \Psi^{-1}(p)$. Indeed, if
the sequence $z_n$ is unbounded, then it tends to infinity at least
along a subsequence. Choose $z > \Psi^{-1}(p)$. Then, for large
enough~$n$,
\begin{align*}
  p & = \mathbb{P}(Y \leq \rho x_n + a(x_n) z_n \mid X>x_n) \geq \mathbb{P}(Y
  \leq
  \rho x_n + a(x_n) z \mid X>x_n) \\
  & \to \Psi(z) > p \; .
\end{align*}
Thus the sequence $z_n$ is bounded, and if it converges to $z$ (along
a subsequence), it necessarily holds that $\Psi(z) = p$, thus $z_n$
converges to $\Psi^{-1}(p)$. Since we have assumed that $a(x) = o(x)$,
this implies that $y_n \sim \rho x_n$ and 
\begin{gather*}
  \frac{\tilde y_n - y_n}{y_n} \sim \frac{a(x_n)\{\Psi^{-1}(p)-z_n\}}{\rho
    x_n}  \to 0 \; . 
\end{gather*}
Moreover, since $\Psi'\circ\Psi^{-1}(p)>0$, by a first order Taylor
expansion, we have
\begin{align*}
  \Psi^{-1}(p) - z_n = \frac1{\Psi'(\xi_n)} \{ \theta(x_n,y_n) -
  \Psi(z_n) \} \; , 
\end{align*}
where $\xi_n = \Psi^{-1}(p) + u\{z_n - \Psi^{-1}(p)\}$ for some
$u\in(0,1)$.  By Assumption~\ref{hypo:second-ordre-modifie},
$\|\theta(x_n,\rho x_n+a(x_n)\cdot)-\Psi\|_\infty = O(c \circ b(n))$.
Since we have already shown that $z_n$ converges to $\Psi^{-1}(p)$,
$1/\Psi'(\xi_n)$ is bounded for large enough $n$, so $ \Psi^{-1}(p) -
z_n = O(c \circ b(n))$. Thus, by~(\ref{eq:rate-k_1}) (with $c$
instead of $\tilde c$), we get
\begin{align*}
  \frac{k^{1/2}x_n}{\log(x_n)a(x_n)} \frac{\tilde y_n - y_n}{y_n} =
  O\left( \frac{k^{1/2} c \circ b(n)}{\log(x_n)} \right) = o\left(
    \frac{k_1^{1/2} c \circ b(n)}{\log(x_n)} \right) = o(1) \; .
\end{align*}
Next, by definition, and since $y_n \sim \rho x_n$ and
$a(x_n)=o(x_n)$, we have
\begin{align*}
  \frac{\hat y_n - \tilde y_n}{y_n} \sim  \frac{\hat \rho_{k_1} -
    \rho}{\rho} + \frac{a(x_n) \Psi^{-1}(p)}{\rho x_n} \left\{
    \frac{\breve a_{k_1}(x_n)}{a(x_n)} - 1 \right\} \; .
\end{align*}
Thus, 
\begin{align*}
  \frac{k^{1/2}}{\log(x_n)}\frac{x_n}{a(x_n)} \frac{\hat y_n - \tilde
    y_n}{y_n} & \sim \frac{k^{1/2} x_n(\hat \rho_{k_1} - \rho)}{\rho a(x_n)
    \log(x_n) } + \frac{ \Psi^{-1}(p)}{\rho} \frac{k^{1/2}}{\log(x_n)}
  \left \{ \frac{\breve a_{k_1}(x_n)}{a(x_n)} - 1 \right \} \; . 
\end{align*}
The first term in the right-hand side tends to zero by
Theorem~\ref{theo:fclt-produit} and the assumptions on the sequences
$k_1$, $k$ and $x_n$.  The second term converges weakly to a centered
Gaussian law with variance $\{\Psi^{-1}(p)/(\rho\beta)\}^2$ by
Proposition~\ref{prop:breve-a}. In the case $\Psi^{-1}(p)=0$, the main term
is the first one in the right-hand side of the last display, and we
conclude by applying Theorem~\ref{theo:fclt-produit}.
 \end{proof}

\end{document}